\documentclass{amsart}

\usepackage{amsmath,amssymb,mathtools,euscript,tikz,units}
\usepackage{xcolor}
\usepackage{verbatim}
\usepackage{colonequals}
\usepackage[titletoc]{appendix}
\usepackage{yhmath}
\usepackage{graphicx}
\usepackage{enumerate}
\usepackage{enumitem}
\usepackage{url}
\usepackage[colorlinks,citecolor=blue,linkcolor=blue,urlcolor=blue]{hyperref}
\usepackage{fancyhdr}
\usepackage{lastpage}

\hypersetup{colorlinks=true, linkcolor=blue, citecolor=blue, urlcolor=blue}
\graphicspath{{./figures/}{./}}

\newcommand{\R}{\mathbb{R}}

\newcommand{\Rp}{\mathbb{R}_{+}}
\newcommand{\Rpo}{\mathbb{R}_{+}^{\circ}}

\newcommand{\eps}{\varepsilon}
\newcommand{\dd}{\,d}

\newcommand{\norm}[1]{\lVert #1\rVert}

\newcommand{\widehatc}[1]{\widehat{#1}_{c}}

\newcommand{\valpha}{\boldsymbol{\alpha}}

\DeclareMathOperator{\CornerCap}{Cap}

\theoremstyle{plain}
\newtheorem{theorem}{Theorem}[section]
\newtheorem{corollary}[theorem]{Corollary}
\newtheorem{proposition}[theorem]{Proposition}
\newtheorem{lemma}[theorem]{Lemma}

\theoremstyle{definition}

\newtheorem{question}[theorem]{Question}

\theoremstyle{remark}
\newtheorem*{remark}{Remark}

\usepackage[nameinlink]{cleveref}
\crefname{theorem}{Theorem}{Theorems}
\crefname{corollary}{Corollary}{Corollaries}
\crefname{proposition}{Proposition}{Propositions}
\crefname{lemma}{Lemma}{Lemmas}
\crefname{remark}{Remark}{Remarks}
\crefname{question}{Question}{Questions}
\Crefname{theorem}{Theorem}{Theorems}
\Crefname{corollary}{Corollary}{Corollaries}
\Crefname{proposition}{Proposition}{Propositions}
\Crefname{lemma}{Lemma}{Lemmas}
\Crefname{remark}{Remark}{Remarks}
\Crefname{question}{Question}{Questions}

\numberwithin{equation}{section}
\begin{document}

\title{The Ekeland--Nirenberg Variational Problem:\\
A Sharp Positivity Threshold and Extensions}

\author{Qi Guo}
\author{Xueping Huang}
\author{Yi Huang}

\begin{abstract}
We study the Ekeland--Nirenberg variational problem in the two-dimensional diagonal family
\[
 J_{a,c,d}(u)=\int_{\Rp^2}\bigl(u_{xy}^2+a u_x^2+c u_y^2+d u^2\bigr)\dd x\dd y,
 \qquad a,c,d>0,
\]
under the constraint $u(0,0)=1$. If $u_{a,c,d}$ is the unique minimizer and $K_{a,c,d}$ is its cosine kernel, we prove the sharp classification
\[
 K_{a,c,d}>0 \hbox{ on } \Rp^2\quad\Longleftrightarrow\quad
 u_{a,c,d}>0 \hbox{ on } \Rp^2\quad\Longleftrightarrow\quad d\le ac .
\]
Thus every supercritical triple $d>ac$ produces sign change. We also prove local sign-change stability under small two-dimensional non-diagonal perturbations and a sharp product-type $n$-dimensional diagonal threshold. The domain and evolution results are stated in precise auxiliary settings: a free-boundary capacity formulation for domains and a selected decaying branch of the second-order evolution equation. 
\end{abstract}

\maketitle

\footnotetext[1]{\textbf{2020 MSC.} Primary 49S05, 35C15; Secondary 46E35, 60H30.}

\tableofcontents

\section{Introduction}

We begin with the two-dimensional diagonal problem.  Let
\[
E_2=\{u:\Rp^2\to\R: u,u_x,u_y,u_{xy}\in L^2(\Rp^2)\}
\]
and, for $a,c,d>0$, set
\begin{equation*} 
J_{a,c,d}(u)=\int_{\Rp^2}\bigl(u_{xy}^2+a u_x^2+c u_y^2+d u^2\bigr)\dd x\dd y.
\end{equation*}
The constrained problem is
\[
\inf\{J_{a,c,d}(u):u\in E_2,\ u(0,0)=1\}.
\]
Ekeland and Nirenberg proved, in their general framework, that this problem has a unique minimizer and that the minimizer is smooth up to the boundary of the quadrant \cite{EkelandNirenberg2005}.  The remaining question was its sign.  At the end of their paper they ask whether the minimizer is positive.  This leads to the following natural conjecture.

\medskip
\noindent\textbf{Original positivity conjecture.}  For every $a,c,d>0$, the unique minimizer of $J_{a,c,d}$ under $u(0,0)=1$ is positive on $\Rp^2$.
\medskip

The conjecture has a concrete origin in mathematical finance.  Bouchard, Ekeland, and Touzi use Malliavin integration by parts to rewrite conditional expectations such as
\[
v(x)=\mathbb E[g(X_2)\mid X_1=x]
\]
as unconditional expectations with localizing weights \cite{BouchardEkelandTouzi2004}.  The localizer is not unique.  Different localizers give the same conditional expectation, but their Monte Carlo variances are different.  The analytic reduction in \cite{BouchardEkelandTouzi2004,EkelandNirenberg2005} leads to a quadratic minimization problem of the form above.  In this interpretation, $u(x,y)$ is the normalized localizing profile, $u(0,0)=1$ is the normalization that preserves the conditional expectation, and $J_{a,c,d}(u)$ is the part of the integrated mean-square error depending on the localizer.  Positivity of $u$ means that the optimal localizer is a genuine nonnegative averaging weight.  A negative part means that the variance optimum uses signed cancellation, closer to a control variate than to a probability weight.  Conditional expectations of this type appear in American and Bermudan option algorithms \cite{LongstaffSchwartz2001,Glasserman2004,BouchardWarin2012} and in Monte Carlo schemes for BSDEs \cite{BouchardTouzi2004,GobetLemorWarin2005,CrisanManolarakisTouzi2010,GobetTurkedjiev2016}.

For $(x,y)\in\Rp^2$ define the cosine kernel
\begin{equation}\label{eq:kernel-intro}
K_{a,c,d}(x,y)=\frac{4}{\pi^2}\int_0^\infty\int_0^\infty
\frac{\cos(x\xi)\cos(y\eta)}{\xi^2\eta^2+a\xi^2+c\eta^2+d}\dd\xi\dd\eta.
\end{equation}
We prove in Proposition \ref{prop:minimizer-kernel} that
\[
u_{a,c,d}(x,y)=\frac{K_{a,c,d}(x,y)}{K_{a,c,d}(0,0)}.
\]
Thus the positivity problem is exactly a kernel positivity problem.

\begin{theorem}[Sharp two-dimensional positivity threshold]\label{thm:main}
Let $a,c,d>0$.  Let $u_{a,c,d}$ be the unique minimizer of $J_{a,c,d}$ under $u(0,0)=1$, and let $K_{a,c,d}$ be the kernel in \eqref{eq:kernel-intro}.  Then the following are equivalent:
\begin{enumerate}[label=\textup{(\roman*)}]
\item $u_{a,c,d}(x,y)>0$ for every $(x,y)\in \Rp^2$;
\item $K_{a,c,d}(x,y)>0$ for every $(x,y)\in \Rp^2$;
\item $d\le ac$.
\end{enumerate}
Equivalently, every supercritical triple $d>ac$ produces sign change.
\end{theorem}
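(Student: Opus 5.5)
The first step is the equivalence (i)$\Leftrightarrow$(ii). By Proposition~\ref{prop:minimizer-kernel} we have $u_{a,c,d}=K_{a,c,d}/K_{a,c,d}(0,0)$, and $K_{a,c,d}(0,0)>0$ because the integrand in \eqref{eq:kernel-intro} is positive at $x=y=0$. So $u_{a,c,d}$ and $K_{a,c,d}$ have the same sign everywhere, and it remains to prove (ii)$\Leftrightarrow$(iii).

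Next I would carry out the $\eta$-integration explicitly. The denominator equals $A\eta^2+B$ with $A=\xi^2+c$ and $B=a\xi^2+d$. Using
\[
\int_0^\infty\frac{\cos(y\eta)}{A\eta^2+B}\dd\eta=\frac{\pi}{2\sqrt{AB}}e^{-y\sqrt{B/A}},
\]
I obtain
\[
K_{a,c,d}(x,y)=\frac{2}{\pi}\int_0^\infty\cos(x\xi)\,g_y(\xi)\dd\xi,\qquad g_y(\xi)=\frac{e^{-y\,m(\xi)}}{\sqrt{(\xi^2+c)(a\xi^2+d)}},\qquad m(\xi)^2=a+\frac{d-ac}{\xi^2+c}.
\]
The sign of $d-ac$ therefore governs the monotonicity of $m$, and this is the mechanism behind the threshold. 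In the case $d=ac$ everything is explicit: $m\equiv\sqrt a$, so $K_{a,c,d}(x,y)=e^{-\sqrt a\,y}e^{-\sqrt c\,x}/(a\sqrt c)$, which is a positive product.

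For $d<ac$ I would write $s=\xi^2$ and show that $s\mapsto g_y(\sqrt s)$ is completely monotone on $(0,\infty)$ for each fixed $y\ge0$. The factors $(s+c)^{-1/2}$ and $(as+d)^{-1/2}$ are completely monotone. The function $\phi(s)=a-(ac-d)/(s+c)$ is a positive Bernstein function. Its square root is again Bernstein, since it is a composition with the Bernstein function $t\mapsto\sqrt t$. Hence $e^{-y\sqrt{\phi(s)}}$ is completely monotone, and a product of completely monotone functions is completely monotone. Bernstein's theorem then gives $g_y(\xi)=\int_{[0,\infty)}e^{-t\xi^2}\mu_y(dt)$ with $\mu_y$ a nonzero positive measure. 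Exchanging integrals (Tonelli is fine after checking integrability), each $\int_0^\infty\cos(x\xi)e^{-t\xi^2}\dd\xi$ is a positive Gaussian for $t>0$. An atom at $t=0$ is impossible, because $g_y(\xi)\to0$ as $\xi\to\infty$. This yields $K_{a,c,d}>0$ everywhere.

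For $d>ac$ I argue by contradiction. Suppose $K(\cdot,y)\ge0$ on $\Rp$ for some fixed $y$. The even extension of $K(\cdot,y)$ is a nonnegative function whose cosine transform is the continuous integrable function $g_y$. A standard Bochner/Fej\'er argument then shows $K(\cdot,y)\in L^1$ and gives the inversion formula $g_y(\xi)=\int_0^\infty\cos(x\xi)K(x,y)\dd x$. It follows that $g_y(\xi)\le g_y(0)$ for all $\xi$. But now $m$ is strictly decreasing, so
\[
\frac{g_y(\xi)}{g_y(0)}=\sqrt{\frac{cd}{(\xi^2+c)(a\xi^2+d)}}\;e^{y(m(0)-m(\xi))}.
\]
For any fixed $\xi>0$ the exponent is positive, so this ratio tends to $\infty$ as $y\to\infty$. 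Choosing $y$ large gives a contradiction, so $K$ takes negative values and (ii) fails. I expect the main obstacle to be the rigorous justification in these last two steps: the Bochner-type integrability and inversion for a nonnegative $K(\cdot,y)$, and the Tonelli exchange in the completely monotone representation. The structural computations themselves are direct.
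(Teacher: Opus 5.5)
Your proof is correct, and it reaches the threshold by a genuinely different route in both directions.

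\textbf{Subcritical direction.} The paper writes the two-variable reciprocal symbol $M_{a,c,d}(s,t)=1/((s+c)(t+a)-\nu)$ as a Laplace transform with the positive kernel $e^{-cu-av}I_0(2\sqrt{\nu uv})$. It then takes Gaussian cosine transforms in both variables, which gives an explicit positive double-integral formula for $K_{a,c,d}$. You instead integrate out $\eta$ first and show, for each fixed $y$, that $s\mapsto g_y(\sqrt s)$ is completely monotone by Bernstein-function calculus. After that you need only the one-variable Bernstein theorem and one Gaussian cosine integral. Your separate treatment of $d=ac$ is unnecessary, since $\phi\equiv a$ is already Bernstein. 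Also, the constant there should be $1/\sqrt{ac}$, not $1/(a\sqrt c)$; this does not affect the sign.

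\textbf{Supercritical direction.} The paper continues the one-dimensional formula into the slit upper half-plane and derives the exact branch-cut integral over $[1,\sqrt m]$. It then proves a boundary-layer limit on the scale $y\asymp x^{-1/2}$ with universal profile $H$, and certifies $H(\pi)<-3/200$ numerically. Your argument is soft. If $K(\cdot,y)\ge0$, then $g_y$ is the cosine transform of a nonnegative function, so $g_y(\xi)\le g_y(0)$. This fails for large $y$ because your $m(\xi)$ is strictly decreasing when $d>ac$.

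The Bochner step is routine. The cleanest version pairs $K(\cdot,y)$ with $e^{-\eps x^2}\cos(x\xi)$, applies Fubini and the heat kernel, and lets $\eps\downarrow0$. This gives $|g_y(\xi)|\le g_y(0)$ directly, without proving $K(\cdot,y)\in L^1$ or an inversion formula.

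\textbf{What each route buys.} Your route is much shorter and needs no contour deformation or certified numerics. What it gives up is location: you only get, for each sufficiently large $y$, some unspecified $x$ with $K(x,y)<0$. This is consistent with the mirror image, under $K_{a,c,d}(x,y)=K_{c,a,d}(y,x)$, of the paper's tongues. The paper instead identifies the negative tongues $y\approx\pi/(A_m\sqrt x)$ uniformly in $m$ after scaling. That is what Corollary~\ref{cor:first-scale}, the explicit example $K_{1,1,5}(X,\pi/\sqrt{2X})<0$, and the discussion of $N_m$ rely on. The paper's two-parameter Laplace representation is also the form that generalizes directly to the $n$-dimensional product family via $\Phi_n$.
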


\begin{figure}[h]
\centering
\includegraphics[width=0.62\textwidth]{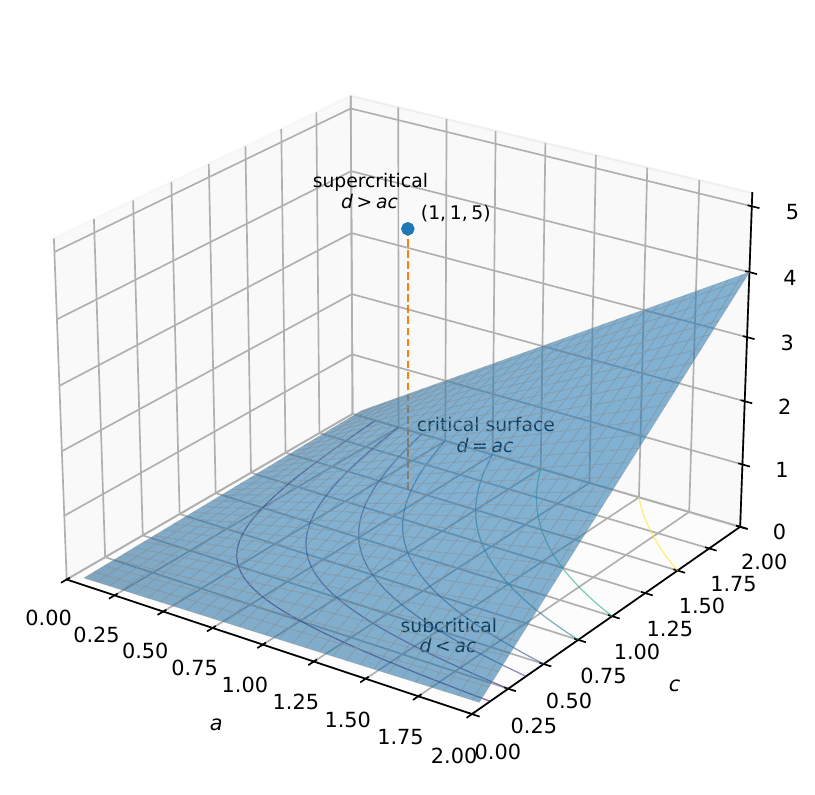}
\caption{The threshold surface $d=ac$ in the positive $(a,c,d)$-octant.  Points below or on the surface are positive.  Points above the surface are supercritical and produce sign change.  The point $(1,1,5)$ is a concrete counterexample.}
\label{fig:threshold}
\end{figure}

\begin{remark}
In the critical case $d=ac$, the kernel factorizes:
\[
K_{a,c,ac}(x,y)=\frac{1}{\sqrt{ac}}e^{-\sqrt c x-\sqrt a y},
\qquad
u_{a,c,ac}(x,y)=e^{-\sqrt c x-\sqrt a y}.
\]
In particular, $K_{a,c,ac}(0,0)=1/\sqrt{ac}$. Thus the separable exponential is exactly the threshold case.  It is the analytic counterpart of the separable exponential localizers in the variance-reduction problem.
\end{remark}

Theorem \ref{thm:main} has two immediate extensions.  First, every supercritical diagonal example is stable under small two-dimensional non-diagonal perturbations.  Second, the same result extends to a product-type diagonal subfamily of the original $n$-dimensional Ekeland--Nirenberg problem.

\begin{corollary}[Two-dimensional non-diagonal sign stability]\label{cor:intro-b}
Fix $a,c,d>0$ with $d>ac$.  For
\[
J_{a,b,c,d}(u)=\int_{\Rp^2}\bigl(u_{xy}^2+a u_x^2+2b u_xu_y+c u_y^2+d u^2\bigr)\dd x\dd y,
\]
there exists $b_0>0$ such that, for every $|b|<b_0$, the constrained Ekeland--Nirenberg minimizer under $u(0,0)=1$ changes sign.
\end{corollary}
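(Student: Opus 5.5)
Our plan is a perturbation argument built on Theorem \ref{thm:main}. Since $d>ac$, Theorem \ref{thm:main} gives a point $(x_0,y_0)\in\Rp^2$ with $K_{a,c,d}(x_0,y_0)<0$. We will show that the minimizer $u_{a,b,c,d}$ for the non-diagonal functional $J_{a,b,c,d}$ depends continuously on $b$ in the pointwise sense, in particular at $(x_0,y_0)$, as $b\to 0$. It will then follow that $u_{a,b,c,d}(x_0,y_0)<0$ for $|b|$ small. Positivity at the origin is automatic from $u(0,0)=1$, so a negative value means the minimizer changes sign.

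\emph{Step 1 (coercivity).} For $|b|<\sqrt{ac}$ the form $a\xi^2+2b\xi\eta+c\eta^2$ is positive definite. Hence $J_{a,b,c,d}$ is comparable to the $E_2$-norm squared, uniformly for $|b|\le b_1<\sqrt{ac}$. By Ekeland--Nirenberg a unique minimizer exists. We next characterize it through the Euler--Lagrange equation: $u_b=G_b/G_b(0,0)$, where $G_b\in E_2$ is the Riesz representer of the (continuous) point evaluation $\delta_{(0,0)}$ in the Hilbert space $(E_2,\langle\cdot,\cdot\rangle_b)$. Here $\langle\cdot,\cdot\rangle_b$ is the inner product associated with $J_{a,b,c,d}$. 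Continuity of point evaluation on $E_2$ holds because $E_2$ embeds into bounded continuous functions, via $|u(x,y)|^2\lesssim \|u\|_{E_2}^2$ obtained from a one-dimensional Sobolev argument in each variable. Thus $G_b(0,0)=\|G_b\|_b^2>0$, and $u_b$ minimizes $J_b$ subject to $u(0,0)=1$. This is the same structure as in Proposition \ref{prop:minimizer-kernel}, except that no explicit cosine formula is needed.

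\emph{Step 2 (continuity in $b$).} We compare $G_b$ with $G_0$, which equals $K_{a,c,d}$ up to the normalization of Proposition \ref{prop:minimizer-kernel}. The representer identities give, for all $v\in E_2$,
\[
\langle G_b-G_0,v\rangle_0 = \langle G_b,v\rangle_0-\langle G_b,v\rangle_b = -2b\int (G_b)_x v_y+(G_b)_y v_x .
\]
Taking $v=G_b-G_0$ and using the uniform equivalence of norms yields
\[
\|G_b-G_0\|_{E_2}\le C|b|\,\|G_b\|_{E_2}\le C'|b| .
\]
The uniform bound on $\|G_b\|$ comes from $\|G_b\|_b^2=G_b(0,0)\le C\|G_b\|_{E_2}$. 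The embedding of Step 1 then gives $\sup|G_b-G_0|\le C''|b|$. This yields $G_b(0,0)\to G_0(0,0)>0$ and $G_b(x_0,y_0)\to G_0(x_0,y_0)<0$. Consequently $u_b(x_0,y_0)\to u_{a,c,d}(x_0,y_0)<0$, and we can choose $b_0$ explicitly from these constants.

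The main obstacle is the embedding $E_2\hookrightarrow C_b(\Rp^2)$ with a uniform constant, because the argument is otherwise soft. We would prove it by writing $u(x,y)$ for $u$ smooth with compact support, approximating as needed. For fixed $y$, the one-dimensional inequality $\sup_x|f|^2\le \|f\|_2^2+\|f'\|_2^2$ applied to $f=u(\cdot,y)$ gives $\sup_x|u(x,y)|^2\le \|u(\cdot,y)\|^2_{L^2}+\|u_x(\cdot,y)\|^2_{L^2}$. The map $y\mapsto \|u(\cdot,y)\|^2_{L^2}+\|u_x(\cdot,y)\|^2_{L^2}$ is a function of $y$ whose $W^{1,1}$ norm is controlled by $\|u\|^2+\|u_y\|^2+\|u_x\|^2+\|u_{xy}\|^2$, using Cauchy--Schwarz on its derivative. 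Applying the one-dimensional bound $\sup|g|\le\|g\|_{W^{1,1}}$ then controls $\sup|u|^2$ by $C\|u\|_{E_2}^2$.
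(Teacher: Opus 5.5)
Your proposal is correct and takes essentially the same route as the paper. Both arguments use coercivity of the perturbed form for small $|b|$, then the Riesz (Lax--Milgram) representative of the corner evaluation normalized by its corner value, then continuity of that representative in $E_2$ as $b\to0$, and finally continuity of point evaluation to transfer the negative value from Theorem~\ref{thm:main}; where the paper uses the resolvent identity $k_b-k_0=-bT_b^{-1}Sk_0$, your direct energy estimate is equivalent and even gives a Lipschitz bound, and the only slip is a harmless constant: with $\langle\cdot,\cdot\rangle_b$ the polarization of $J_{a,b,c,d}$, the difference $\langle G_b,v\rangle_b-\langle G_b,v\rangle_0$ equals $b\int\bigl((G_b)_xv_y+(G_b)_yv_x\bigr)$, not $2b$ times it.
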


This shows that the sign-changing counterexamples are not an artifact of the diagonal restriction $b=0$.  The conclusion is local in the coefficient $b$ and should not be read as a sharp global positivity criterion for the full non-diagonal two-dimensional problem.

Now we recall the original $n$-dimensional formulation using the notation fixed in Section~\ref{subsec:notation}.  The Ekeland--Nirenberg functional has the form
\[
J_Q(u)=\int_{\Rp^n}\left((D_{[n]}u)^2+
\bigl(Q\widetilde D^n_{n-1}u,\widetilde D^n_{n-1}u\bigr)\right)\dd\mathbf{x},
\]
where $Q$ is a symmetric positive definite matrix acting on the list $\widetilde D^n_{n-1}u$ of lower mixed derivatives. The following result is a sharp theorem for a product-type diagonal choice of $Q$.

\begin{corollary}[Product-type $n$-dimensional diagonal family]\label{cor:intro-nd}
Let $n\ge2$, let $\valpha=(\alpha_1,\ldots,\alpha_n)\in(0,\infty)^n$, and put $A_{\valpha}=\prod_{j=1}^n\alpha_j$.  Let $Q_{\valpha,d}$ be the diagonal matrix in the Ekeland--Nirenberg functional whose coefficient of $|D_Iu|^2$ is
\[
q_I=\prod_{j\notin I}\alpha_j
\quad\text{for }\emptyset\ne I\subsetneq[n],
\qquad
q_\emptyset=d.
\]
Equivalently,
\[
J^{(n)}_{\valpha,d}(u)=\int_{\Rp^n}\left(
(D_{[n]}u)^2+
\sum_{\emptyset\ne I\subsetneq[n]}\Bigl(\prod_{j\notin I}\alpha_j\Bigr)|D_Iu|^2+d|u|^2
\right)\dd\mathbf{x}.
\]
The constrained minimizer under $u(\mathbf{0})=1$ is positive on $\Rp^n$ if and only if
\[
d\le A_{\valpha}.
\]
If $d>A_{\valpha}$, then the minimizer changes sign.  This sign change remains true after adding any fixed finite family of sufficiently small non-diagonal cross terms
\[
2\theta_r\int_{\Rp^n}D_{S_r}uD_{T_r}u\dd\mathbf{x},
\qquad S_r\ne T_r\subsetneq [n].
\]
\end{corollary}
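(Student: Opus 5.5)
The plan is to reduce Corollary~\ref{cor:intro-nd} to a one-variable positivity question, exactly as Theorem~\ref{thm:main} is reduced to kernel positivity.

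\textbf{Step 1: Fourier symbol and kernel.} Extending $u$ evenly in each variable, the Euler--Lagrange equation together with the constraint $u(\mathbf 0)=1$ should give, by the argument of Proposition~\ref{prop:minimizer-kernel}, the representation $u=K/K(\mathbf 0)$. Here $K$ is the $n$-fold cosine transform of $1/P(\xi)$, and
\[
P(\xi)=\prod_j\xi_j^2+\sum_{\emptyset\ne I\subsetneq[n]}\Bigl(\prod_{j\notin I}\alpha_j\Bigr)\prod_{i\in I}\xi_i^2+d .
\]
The product choice of weights gives the key identity
\[
P(\xi)=\prod_{j=1}^n(\xi_j^2+\alpha_j)+(d-A_{\valpha}).
\]
\textbf{Step 2: the two regimes.}

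\emph{Critical case $d=A_{\valpha}$.} The symbol factorizes, so $K$ is a product of one-dimensional kernels $\prod_j e^{-\sqrt{\alpha_j}x_j}/(2\sqrt{\alpha_j})$, up to normalization. This is positive, generalizing the Remark after Theorem~\ref{thm:main}.

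\emph{Subcritical case $d<A_{\valpha}$.} Put $\delta=A_{\valpha}-d\in(0,A_{\valpha})$ and write $p=\prod_j(\xi_j^2+\alpha_j)\ge A_{\valpha}>\delta$. Then the Neumann series
\[
\frac1{p-\delta}=\sum_{k\ge0}\delta^k p^{-k-1}
\]
converges uniformly. Each term $p^{-k-1}$ factors as $\prod_j(\xi_j^2+\alpha_j)^{-k-1}$. The cosine transform of $(\xi^2+\alpha)^{-m}$ is a positive multiple of a Matérn-type function $|x|^{m-1/2}K_{m-1/2}(\sqrt\alpha|x|)$, which is strictly positive. Hence every term is positive and $K>0$. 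Summing and interchanging the series with the transform needs integrability, which holds for $k\ge0$ in each variable, since $1/p$ is already integrable after the $n$-fold product.

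\emph{Supercritical case $d>A_{\valpha}$.} I would argue as in the proof of Theorem~\ref{thm:main} for $d>ac$, if that proof is structural. Otherwise I would use a direct criterion: restrict to the diagonal ray or to a single coordinate, $x=(t,0,\dots,0)$. Integrating out $\xi_2,\dots,\xi_n$ gives a one-dimensional cosine transform of a function $g(\xi_1^2+\alpha_1)$, where
\[
g(s)=\int\frac{d\xi'}{s\,q(\xi')+\varepsilon},\qquad \varepsilon=d-A_{\valpha}>0,\quad q(\xi')=\prod_{j\ge2}(\xi_j^2+\alpha_j).
\]
A cleaner approach is to show that the multiplier $1/(p+\varepsilon)$ is not positive definite in the right sense. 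The plan is to exhibit a test configuration, or to use Bochner's theorem in reverse: a positive even integrable $K$ with nonnegative transform imposes constraints that can be violated. Concretely, I expect the route to be this. Separate the variables in $x_1$ with $\xi'$ fixed: the kernel in $x_1$ is $e^{-\sqrt{\alpha_1+\varepsilon/q}\,x_1}$-type, weighted by $1/\bigl(q\sqrt{\alpha_1+\varepsilon/q}\bigr)$. Then study the sign of $K$ at points where the remaining coordinates are large, where the leading asymptotics are dominated by the region $\xi'\to$ imaginary poles. Since $p+\varepsilon$ vanishes on the complex set $\prod_j(\xi_j^2+\alpha_j)=-\varepsilon$, the exponential decay rate along a suitable direction comes from a complex conjugate pair of singularities, which produces oscillation and hence sign change. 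This is the \textbf{main obstacle}. I would first try to reduce it to the two-dimensional Theorem~\ref{thm:main} by integrating out $n-2$ variables at $x_j=0$ and checking that the resulting two-variable kernel is supercritical-like.

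\textbf{Step 3: non-diagonal perturbations.} Once sign change is established at some point $\mathbf x_0$ where $K(\mathbf x_0)<0$, the cross terms modify the symbol by $2\sum_r\theta_r\,\xi^{S_r}\xi^{T_r}$ (with the appropriate sine/cosine structure after reflection). As in Corollary~\ref{cor:intro-b}, continuity of the minimizer in the coefficients gives $u_\theta(\mathbf x_0)\to u(\mathbf x_0)<0$, because the quadratic form stays uniformly coercive for small $\theta$. The sign change therefore persists for sufficiently small $\theta$.
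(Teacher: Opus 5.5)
\textbf{Where the proposal stands.} Your critical and subcritical arguments are correct. The Neumann series with each factor $(\xi_j^2+\alpha_j)^{-k-1}$ sent to a positive Mat\'ern function is the same computation as the paper's representation $\int e^{-\sum_j\beta_jt_j}\Phi_n(\nu t_1\cdots t_n)\,d\mathbf t$ followed by the Gaussian cosine integral. Step~3 is the coercivity--resolvent argument of Theorem~\ref{thm:nd-stability}. Your remark that the cross terms only modify the symbol is inaccurate, since $D_Su\,D_Tu$ with $S\ne T$ couples sine and cosine transforms, but nothing depends on it.

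\textbf{The gap.} The supercritical direction for $n\ge3$ is never proved: you do not show that $K^{(n)}_{\valpha,d}$ takes a negative value when $d>A_{\valpha}$. Both the ``only if'' half of the threshold and the perturbative claim depend on this, since Step~3 needs some $\mathbf p$ with $u_0(\mathbf p)<0$. None of your candidate routes closes the gap.
\begin{itemize}
\item Restricting to an axis $x=(t,0,\ldots,0)$ cannot detect sign change. With $\mu=d-A_{\valpha}$ (your $\varepsilon$), the one-dimensional symbol is $g(\xi_1^2+\alpha_1)$ with $g(s)=\int(s\,q(\xi')+\mu)^{-1}d\xi'$. This $g$ is completely monotone in $s$, hence a positive mixture of Gaussians $e^{-\tau\xi_1^2}$. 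So the axis kernel is positive for every $d$, exactly as in Proposition~\ref{prop:axis}.
\item The complex-singularity heuristic would require redoing the branch-cut and boundary-layer analysis in $n-1$ frequency variables, which you do not attempt.
\item Your last idea, reducing to Theorem~\ref{thm:main}, is the right instinct but integrates out the wrong variables. Setting $x_3=\cdots=x_n=0$ integrates over $\xi_3,\ldots,\xi_n$. For $n=3$ this gives the two-dimensional symbol $\frac{\pi}{2}\bigl[P_{12}(\alpha_3P_{12}+\mu)\bigr]^{-1/2}$ with $P_{12}=(\xi_1^2+\alpha_1)(\xi_2^2+\alpha_2)$. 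That is not of the form $W_{a,c,d}^{-1}$, so Theorem~\ref{thm:main} does not apply.
\end{itemize}

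\textbf{The missing idea.} Use the dual operation: integrate over the extra \emph{spatial} variables, which freezes the extra \emph{frequencies} at $0$. Suppose $K^{(n)}_{\valpha,d}\ge0$ everywhere, and set
\[
L_\eps(x_1,x_2)=\int_{\Rp^{n-2}}K^{(n)}_{\valpha,d}(x_1,x_2,\mathbf z)\,e^{-\eps(z_3+\cdots+z_n)}\,d\mathbf z\ge0 .
\]
By Fubini, $L_\eps$ is the cosine transform in $(\xi_1,\xi_2)$ of $W_{\valpha,d}^{-1}$ averaged against the half-line Poisson kernels $\eps/(\eps^2+\xi_k^2)$, $k\ge3$. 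These are approximate identities at $\xi_k=0$. The bound $W_{\valpha,d}(\boldsymbol\xi)\ge W_{\valpha,d}(\xi_1,\xi_2,0,\ldots,0)$ supplies an integrable majorant. Hence $L_\eps$ converges to the face kernel with symbol
\[
W_{\valpha,d}(\xi_1,\xi_2,0,\ldots,0)=\bar A_{12}\bigl(\xi_1^2\xi_2^2+\alpha_2\xi_1^2+\alpha_1\xi_2^2+\alpha_1\alpha_2+\mu/\bar A_{12}\bigr),\qquad \bar A_{12}=\prod_{k\ge3}\alpha_k .
\]
This is $\bar A_{12}W_{a,c,d'}$ with $a=\alpha_2$, $c=\alpha_1$ and $d'=\alpha_1\alpha_2+\mu/\bar A_{12}>ac$. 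By Theorem~\ref{thm:main} the face kernel is negative somewhere, which contradicts $L_\eps\ge0$. This yields sign change by contradiction without locating the negative point, and that is all the statement and your Step~3 need.
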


Long's ICVAM problem list records three questions of Ekeland about this unusual variational problem \cite[Section 3.2.2]{Long2008}.  The first is the positivity question, answered by Theorem \ref{thm:main} in the diagonal two-dimensional family.  The second asks what happens when the quadrant is replaced by other domains.  We use the natural relative corner capacity associated with the energy form.  This is the standard capacity viewpoint for point constraints and traces in Sobolev-type spaces; see, for example, \cite{Mazya2011,AdamsHedberg1996}.  For a domain $\Omega$ with corner $0$, define the free-boundary energy
\[
J_\Omega(u)=\int_\Omega\bigl(u_{xy}^2+a u_x^2+2b u_xu_y+c u_y^2+d u^2\bigr)\dd x\dd y,
\]
where $a>0$, $c>0$, $d>0$, and $ac-b^2>0$.  We define the free-boundary test class \(\mathcal D_{\rm fb}(\Omega)\) as follows.
A function \(v\) belongs to \(\mathcal D_{\rm fb}(\Omega)\) if
\[
v\in C^\infty(\Omega),\qquad
v,\ v_x,\ v_y,\ v_{xy}\in L^2(\Omega),
\]
its support is compact relative to \(\overline\Omega\), and \(v\) has a continuous
representative on \(\Omega\cup\{0\}\) in a relative neighborhood of the corner. The value \(v(0,0)\)
always means the value of this prescribed continuous representative, see Section \ref{sec:long-answers} for the detail.
Let $E(\Omega)$ be the completion of $\mathcal D_{\rm fb}(\Omega)$ under the norm induced by $J_\Omega^{1/2}$. Define the relative corner capacity
\begin{equation*} 
\CornerCap_\Omega(0)=\inf\{J_\Omega(v):v\in \mathcal D_{\rm fb}(\Omega),\ v(0,0)=1\}.
\end{equation*}

\begin{theorem}[Domain capacity criterion]\label{thm:intro-region-answer}
The constrained problem on $\Omega$ with corner $0$ 
\[\inf \{J_\Omega (u): u\in E(\Omega),\ u(0,0)=1\}\]
 has a unique minimizer if and only if
\[
\CornerCap_\Omega(0)>0.
\]

\end{theorem}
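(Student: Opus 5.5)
Since $ac-b^2>0$, the quadratic form $a\xi^2+2b\xi\eta+c\eta^2$ is positive definite, so $J_\Omega$ is comparable to $\int_\Omega(u_{xy}^2+u_x^2+u_y^2+u^2)$. Hence $J_\Omega^{1/2}$ is a genuine Hilbert norm on $\mathcal D_{\rm fb}(\Omega)$, and $E(\Omega)$ is a Hilbert space with inner product $B_\Omega(u,v)$, the polarization of $J_\Omega$. The whole proof rests on one question: does the point evaluation $\ell(v)=v(0,0)$ on $\mathcal D_{\rm fb}(\Omega)$ extend continuously to $E(\Omega)$? Without such an extension the constraint $u(0,0)=1$ has no meaning on the completion. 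I would read the constrained problem on $E(\Omega)$ as using this extension whenever it exists, and I would show that $\CornerCap_\Omega(0)>0$ is equivalent to $\ell$ being bounded.

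\textbf{Capacity and boundedness of $\ell$.} By homogeneity,
\[
\CornerCap_\Omega(0)=\inf_{v\in\mathcal D_{\rm fb},\,v(0,0)\neq 0}\frac{J_\Omega(v)}{|v(0,0)|^2}.
\]
So $\CornerCap_\Omega(0)=\kappa>0$ holds exactly when $|\ell(v)|\le \kappa^{-1/2}J_\Omega(v)^{1/2}$ for all test functions, that is, when $\ell$ is bounded. In that case $\ell$ extends uniquely to $E(\Omega)$, and the constrained set $\{u\in E(\Omega):\ell(u)=1\}$ is a nonempty closed affine hyperplane. It is nonempty because $\CornerCap_\Omega(0)<\infty$: some test function with $v(0,0)=1$ exists, namely a smooth cutoff near the corner equal to $1$ there.

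\textbf{Sufficiency.} Assume $\CornerCap_\Omega(0)>0$. By the Riesz representation theorem there is a unique $k\in E(\Omega)$ with $B_\Omega(k,v)=\ell(v)$ for all $v\in E(\Omega)$, and $k\neq0$. The minimizer of $J_\Omega(u)=\|u\|^2$ over the hyperplane $\ell(u)=1$ is the unique element of minimal norm in a closed convex set. Explicitly it is $u_*=k/\ell(k)$, with $\ell(k)=\|k\|^2>0$. Uniqueness follows from strict convexity of the Hilbert norm. We also get $\min J_\Omega=1/\|k\|^2=\CornerCap_\Omega(0)$, where the last equality uses density of $\mathcal D_{\rm fb}$ and continuity of $\ell$.

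\textbf{Necessity.} Assume $\CornerCap_\Omega(0)=0$. Then there are test functions $v_n$ with $v_n(0,0)=1$ and $J_\Omega(v_n)\to0$, so $v_n\to0$ in $E(\Omega)$ and $\ell$ is unbounded. In this case no continuous extension of the point constraint exists. If the constraint is interpreted as the closure in $E(\Omega)$ of $\{v\in\mathcal D_{\rm fb}:v(0,0)=1\}$, that closure is all of $E(\Omega)$. Indeed, for any $w\in\mathcal D_{\rm fb}$ with arbitrary value at the corner, the functions $w+(1-w(0,0))v_n$ satisfy the constraint and converge to $w$. Consequently the infimum is $0$, and it is attained only at $u=0$. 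But $0$ cannot be a genuine minimizer satisfying the constraint, since $0=\lim v_n$ while $v_n(0,0)=1$. More precisely, the constrained problem either has no meaning or has infimum $0$ that is not attained by any admissible function with corner value $1$. In both readings there is no unique minimizer.

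\textbf{Main obstacle.} The delicate step is making precise what ``$u(0,0)=1$ on $E(\Omega)$'' means, so that the necessity direction is a clean statement rather than a tautology. I would fix the convention that the constraint refers to the continuous extension of $\ell$, defined only when $\ell$ is bounded. I would also show that any other reasonable definition reduces to the closure argument above, so that the problem degenerates whenever $\CornerCap_\Omega(0)=0$.
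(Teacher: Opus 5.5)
Your proposal is correct and follows essentially the same route as the paper. By homogeneity, positive capacity is equivalent to boundedness of the corner functional, which makes $\{u(0,0)=1\}$ a closed affine hyperplane with a unique minimal-energy point. Zero capacity gives $v_n\to0$ in $E(\Omega)$ with $v_n(0,0)=1$, so the corner value does not extend and the infimum $0$ is ``attained'' only by the zero element. Your explicit minimizer $k/\ell(k)$ and your density-of-the-constraint-set argument add detail, but the convention issue in the necessity direction is settled exactly as in the paper.
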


\begin{remark}
If $\CornerCap_\Omega(0)=0$, the  infimum is $0$; the corner condition is not a closed constraint in the energy completion, so no attained constrained minimizer is represented in $E(\Omega)$.
\end{remark}

\begin{remark}
For the cone
\[
C_{\theta_1,\theta_2}=\{r(\cos\theta,\sin\theta):r>0,\ \theta_1<\theta<\theta_2\},
\]
one has
\[
0\le\theta_1<\theta_2\le\frac\pi2,
\qquad
(\theta_1,\theta_2)\ne\left(0,\frac\pi2\right)
\quad\Longrightarrow\quad
\CornerCap_{C_{\theta_1,\theta_2}}(0)=0.
\]
The full quadrant has positive capacity.  If $\Omega$ contains the full quadrant with the same corner and the admissible restriction to the quadrant is well defined, then $\CornerCap_\Omega(0)>0$, but regularity is not automatic.  Ekeland and Nirenberg already point to the half-plane mechanism \cite[Section 2]{EkelandNirenberg2005}: in $H=\R\times\Rp$, for critical diagonal parameters $d=ac$, the minimizer is
\[
u_H(x,y)=e^{-\sqrt c |x|-\sqrt a y},
\]
which is not $C^1$ on the interior line $x=0$, $y>0$.
\end{remark}

\begin{remark}The capacity of a cone is not determined only by its opening
angle, since the energy is tied to the fixed coordinate directions. If a cone contains
one of the coordinate quadrants, then its corner capacity is positive by restriction
to that quadrant. On the other hand, many obtuse cones which do not contain a
coordinate quadrant have zero corner capacity; the same one-variable logarithmic
cut-off argument applies whenever the horizontal or vertical sections shrink
linearly at the vertex.
\end{remark}
The third Ekeland question asks for an evolution-equation analogue of the same phenomenon: good energy well-posedness without the smoothing one would expect from elliptic or parabolic equations.  The following theorem treats the selected decaying branch, equivalently a first-order nonlocal semigroup; it is not a theorem for arbitrary Cauchy data of the full second-order equation.

\begin{theorem}[Decaying-branch evolution analogue]\label{thm:intro-evolution-answer}
The equation
\[
u_{xxtt}-a u_{xx}-c u_{tt}+d u=0,
\qquad x\in\R,
\quad t>0,
\]
has a decaying branch which is equivalent to
\[
\partial_tu+T_{a,c,d}u=0,
\qquad
\widehat{T_{a,c,d}f}(\xi)
=\sqrt{\frac{a\xi^2+d}{\xi^2+c}}\,\widehat f(\xi).
\]
This flow is well posed and exponentially decaying in every Sobolev space $H^s(\R)$, but it has no Sobolev smoothing.  In the critical case $d=ac$, the data $f(x)=e^{-\sqrt c |x|}$ give
\[
u(t,x)=e^{-\sqrt c |x|-\sqrt a t},
\]
which remains non-$C^1$ for every $t>0$.
\end{theorem}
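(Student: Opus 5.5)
Taking the Fourier transform in $x$, the equation becomes, for each frequency $\xi$, the ODE in $t$
\[
(\xi^2+c)\,\partial_t^2\hat u-(a\xi^2+d)\,\hat u=0,
\]
since $u_{xxtt}\mapsto-\xi^2\partial_t^2\hat u$ and $-au_{xx}\mapsto a\xi^2\hat u$. So $\partial_t^2\hat u=\lambda(\xi)^2\hat u$ with $\lambda(\xi)=\sqrt{(a\xi^2+d)/(\xi^2+c)}>0$. Every solution with tempered growth in each mode splits into a growing branch $e^{\lambda t}$ and a decaying branch $e^{-\lambda t}$. I will define the decaying branch as the set of solutions for which $\hat u(t,\xi)=e^{-\lambda(\xi)t}\hat f(\xi)$. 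Equivalently, these are the solutions with $\hat u(t,\cdot)\to0$ as $t\to\infty$ for a.e.\ $\xi$, or those in $L^2(\Rp;L^2)$.

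On this branch, differentiating gives $\partial_t\hat u=-\lambda\hat u$, which is $\partial_tu+T_{a,c,d}u=0$. Conversely, any solution of the first-order equation satisfies $\partial_t^2\hat u=\lambda^2\hat u$, hence the original PDE. This gives the claimed equivalence.

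\textbf{Well-posedness and decay.} The symbol $\lambda$ is continuous and even. It satisfies $\lambda(0)=\sqrt{d/c}$ and $\lambda(\xi)\to\sqrt a$ as $|\xi|\to\infty$, and it is monotone in $\xi^2$. Hence
\[
\lambda_0:=\min\{\sqrt a,\sqrt{d/c}\}\le\lambda\le\max\{\sqrt a,\sqrt{d/c}\}.
\]
Thus $T_{a,c,d}$ is a bounded, self-adjoint, positive Fourier multiplier on every $H^s(\R)$, and it commutes with $\langle\xi\rangle^s$. The semigroup $e^{-tT}$ is therefore uniformly continuous, with $\|u(t)\|_{H^s}\le e^{-\lambda_0t}\|f\|_{H^s}$. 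Uniqueness within the branch is immediate from the Fourier formula.

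\textbf{No smoothing.} The lower bound $|e^{-\lambda t}|\ge e^{-\lambda_{\max}t}$ gives $\|u(t)\|_{H^s}\ge e^{-\lambda_{\max}t}\|f\|_{H^s}$. In fact $e^{-tT}$ is invertible on each $H^s$, with inverse $e^{tT}$ also bounded. So if $f\in H^s\setminus H^{s+\varepsilon}$, then $u(t)\notin H^{s+\varepsilon}$ for every $t$, and no smoothing occurs.

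\textbf{Critical example.} When $d=ac$, the symbol is constant: $\lambda=\sqrt{a(\xi^2+c)/(\xi^2+c)}=\sqrt a$. So $T=\sqrt a\,I$ and $u(t)=e^{-\sqrt a t}f$. For $f=e^{-\sqrt c|x|}$, which lies in $H^1$ but not in $C^1$, this gives the stated formula, and $u(t)$ keeps the kink at $x=0$ for all $t>0$.

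As a cross-check, one can verify the PDE directly in the distributional sense. Here $u_{xx}=c\,u-2\sqrt c\,\delta_0(x)e^{-\sqrt a t}$ and $u_{tt}=au$. Then
\[
u_{xxtt}-au_{xx}-cu_{tt}+du=a\bigl(cu-2\sqrt c\,\delta_0 e^{-\sqrt a t}\bigr)-a\bigl(cu-2\sqrt c\,\delta_0e^{-\sqrt a t}\bigr)-acu+acu=0.
\]

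The main point needing care is not computational. It is making precise which class of solutions counts as the "decaying branch", since the full second-order equation also admits the growing modes $e^{\lambda t}$. I would state the selection criterion as decay in $t$ in $L^2_x$, and show that this kills the $e^{\lambda t}$ component mode-by-mode.
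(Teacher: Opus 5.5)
Your proposal is correct and follows essentially the same route as the paper: Fourier transform in $x$, then a symbol $\lambda$ bounded above and below by positive constants. Those bounds give two-sided semigroup estimates, hence exponential decay and exact preservation of $H^{s+\varepsilon}$, and $\lambda\equiv\sqrt a$ at $d=ac$. Your mode-by-mode justification of the decaying-branch selection and the distributional check of the critical example add detail the paper leaves implicit, but do not change the argument.
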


\textbf{Main idea.}  Since the Euler--Lagrange equation has no useful maximum principle, the sign is determined by the transform kernel.  The first step is to identify the minimizer as the normalized Riesz representative of the corner value; after the product cosine transform this representative has reciprocal symbol $W_{a,c,d}^{-1}$.  Below the threshold $d\le ac$, this reciprocal symbol has a positive Bessel--Laplace representation.  Applying the Gaussian cosine transform turns it into a positive integral formula for $K_{a,c,d}$, so positivity is immediate.  Above the threshold $d>ac$, the positive representation is lost.  We reduce by scaling to $K_m$, continue the one-dimensional formula into the upper half-plane, and cut along the natural segment $[i,i\sqrt m]$.  The jump across this cut gives an exact real integral.  Its large-$x$ contribution comes from the endpoint $i$, and the correct boundary-layer scale is $y\asymp x^{-1/2}$.  On this scale the kernel converges uniformly to a universal profile $H$.  Since $H(0)>0$ but $H(\pi)<0$, every supercritical kernel has a negative value.  The product-type $n$-dimensional theorem uses the same dichotomy: a positive $n$-parameter Laplace representation below the product threshold, and a two-dimensional frequency-face obstruction above it.

In section 2, we introduce notation and basic lemmas.  Section 3 shows the Bessel--Laplace representations and the subcritical positivity result.  Section 4 proves the supercritical sign-change theorem.  Sections 5 and 6 prove the two-dimensional non-diagonal stability and the $n$-dimensional extensions.  Section 7 gives the capacity and decaying-branch evolution results in their precise settings.  Section 8 records further questions and discussions.

\section{Preliminaries and technical tools}

This section collects the facts used later.  We first fix notation, then discuss the two-dimensional case, and finally record the basic tools that enter the proof.

\subsection{Notation, function spaces, and variational identities}\label{subsec:notation}

We use one convention for the half-line throughout the paper:
\[
 \Rp=[0,\infty),\qquad \Rpo=(0,\infty).
\]
Integrals over $\Rp^n$ mean Lebesgue integrals over $(\Rpo)^n$; changing the domain by boundary sets of measure zero does not change any integral. Pointwise positivity on $\Rp^n$ always refers to the continuous representative supplied by Lemma~\ref{lem:mixed-embedding}. Thus the boundary axes are included in positivity statements.
The following notation guide collects symbols that will be used later:
\begin{itemize}[leftmargin=1.7em]
\item $L^2(\Omega)$ is the Hilbert space of square-integrable functions on $\Omega$, and $H^s(\R)$ is the Sobolev space of order $s$.
\item $C_0(\Rp^n)$ denotes continuous functions on $\Rp^n$ that vanish at infinity; $E_n\hookrightarrow C_0(\Rp^n)$ means continuous embedding.
\item $\widehatc{u}$ denotes the product cosine transform of $u$; hats without the subscript $c$ are ordinary Fourier transforms.
\item $D_Iu=\prod_{j\in I}\partial_{x_j}u$ is a mixed partial derivative indexed by a subset $I\subset[n]$, with $D_\emptyset u=u$.
\item $\widetilde D^n_{n-1}u$ is the list of all derivatives $D_Iu$ with $I\subsetneq[n]$; it excludes only the highest mixed derivative $D_{[n]}u$.
\item $f^-:=\max\{-f,0\}$ is the negative part of a real function; $a.e.$ means almost everywhere.
\item $f\prec g$ means $f\le Cg$ with a uniform constant $C$, and $f\asymp g$ means both $f\prec g$ and $g\prec f$.
\item $J\Subset U$ means that the closure of $J$ is compact and contained in $U$.
\item $O(\cdot)$ and $o(\cdot)$ have their standard asymptotic meanings; all implicit constants are independent of the asymptotic variable under discussion unless dependence is explicitly indicated. $Q>0$ for a matrix means symmetric positive definite.
\item $\CornerCap_\Omega(0)$ denotes the relative corner capacity associated with the energy on a domain $\Omega$.
\end{itemize}

Vectors are written in bold when helpful, for instance $\mathbf{x}=(x_1,\ldots,x_n)$ and $\boldsymbol{\xi}=(\xi_1,\ldots,\xi_n)$. Parameter vectors such as $\valpha=(\alpha_1,\ldots,\alpha_n)$ are also bold. We write $[n]=\{1,\ldots,n\}$ and
\[
 A_n=\{\sigma=(\sigma_1,\ldots,\sigma_n):\sigma_j\in\{0,1\}\}.
\]
Each $\sigma\in A_n$ is identified with the subset $I(\sigma)=\{j:\sigma_j=1\}\subset[n]$. To avoid using two symbols for the same derivative, all formulas below use the subset notation
\[
 D_Iu=\prod_{j\in I}\partial_{x_j}u,
 \qquad D_\emptyset u=u.
\]
The highest mixed derivative is $D_{[n]}u$. The list of lower derivatives in the Ekeland--Nirenberg functional is
\[
 \widetilde D^n_{n-1}u=\{D_Iu:I\subset[n],\ I\ne[n]\}.
\]
The mixed Sobolev space is
\[ 
E_n=\{u:\Rp^n\to\R:D_Iu\in L^2(\Rp^n)\text{ for every }I\subset[n]\},\]
\[
\norm{u}_{E_n}^2=\sum_{I\subset[n]}\norm{D_Iu}_{L^2(\Rp^n)}^2.
\]
In particular,
\[
E_2=\{u:\Rp^2\to\R:u,u_x,u_y,u_{xy}\in L^2(\Rp^2)\}.
\]

\begin{lemma}[Mixed Sobolev embedding and point evaluation]\label{lem:mixed-embedding}
For every $n\ge1$, the product cosine transform identifies $E_n$ with the weighted transform space
\[
\int_{\Rp^n}\prod_{j=1}^n(1+\xi_j^2)|\widehatc{u}(\boldsymbol{\xi})|^2\dd\boldsymbol{\xi}<\infty .
\]
Moreover $E_n\hookrightarrow C_0(\Rp^n)$ and, for every $\mathbf p\in\Rp^n$,
\begin{equation}\label{eq:point-evaluation-bound}
|u(\mathbf p)|\le \norm{u}_{E_n}.
\end{equation}
In particular, the corner functional $u\mapsto u(\mathbf{0})$ is continuous on $E_n$.
\end{lemma}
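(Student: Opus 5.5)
The plan is to reduce everything to the one-dimensional case and then tensorize. For $n=1$, take $u\in C_c^\infty(\Rp)$, or more precisely a smooth function on $[0,\infty)$ with compact support in $[0,\infty)$; these functions are dense in $H^1(\Rpo)$. Define the cosine transform
\[
\widehatc{u}(\xi)=\sqrt{\tfrac{2}{\pi}}\int_0^\infty u(x)\cos(x\xi)\dd x .
\]
This is the Fourier transform of the even extension of $u$, up to normalization. The even extension $\tilde u$ of $u\in H^1(\Rpo)$ lies in $H^1(\R)$, because even reflection does not create a jump in the function, only in the derivative. Plancherel for $\tilde u$ and $\tilde u'$ then gives
\[
\norm{u}_{L^2}^2=\int_0^\infty|\widehatc u|^2\dd\xi,
\qquad
\norm{u'}_{L^2}^2=\int_0^\infty\xi^2|\widehatc u|^2\dd\xi .
\]
The second identity holds because $\tilde u'$ is odd and its Fourier transform is $i\xi\hat{\tilde u}$, with no boundary term. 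Consequently $E_1$ is isometric to $L^2((1+\xi^2)\dd\xi)$ on $\Rp$, and surjectivity follows from the inverse cosine transform.

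For general $n$ I will use the product cosine transform, i.e.\ the Fourier transform of the function evenly extended in every variable. The key observation is that for $I\subset[n]$ the function $D_I\tilde u$ is odd in the variables $x_j$, $j\in I$, and even in the others, and its Fourier transform is $\prod_{j\in I}(i\xi_j)\hat{\tilde u}$. Because even reflection preserves $H^1$-type regularity in each direction separately, the distributional derivatives of the extension are the reflected derivatives with no singular parts on the hyperplanes. The cleanest way to justify this is to prove the identities on the dense class of functions that are smooth up to the boundary with compact support, and then extend by density. Density in $E_n$ comes from translation into the interior in the direction $-$, followed by truncation and mollification; equivalently, one can use the tensor-product structure $E_n=H^1(\Rpo)^{\hat\otimes n}$ as a Hilbert tensor product. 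Summing over $I$ yields
\[
\norm{u}_{E_n}^2=\sum_I\int\prod_{j\in I}\xi_j^2\,|\widehatc u|^2\dd\boldsymbol\xi=\int\prod_j(1+\xi_j^2)\,|\widehatc u|^2\dd\boldsymbol\xi,
\]
which is the stated weighted identification.

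For the point bound, write the inversion
\[
u(\mathbf p)=(2/\pi)^{n/2}\int_{\Rp^n}\widehatc u(\boldsymbol\xi)\prod_j\cos(p_j\xi_j)\dd\boldsymbol\xi .
\]
Cauchy--Schwarz with weight $\prod_j(1+\xi_j^2)$ gives
\[
|u(\mathbf p)|\le (2/\pi)^{n/2}\Bigl(\prod_j\int_0^\infty\frac{\dd\xi}{1+\xi^2}\Bigr)^{1/2}\norm{u}_{E_n}=\norm{u}_{E_n},
\]
since $\int_0^\infty(1+\xi^2)^{-1}\dd\xi=\pi/2$. This also shows that $\widehatc u\in L^1$. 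The Riemann--Lebesgue lemma applied to the even extension then gives continuity and vanishing at infinity of the representative defined by the integral, and this representative agrees with $u$ almost everywhere. The continuity of $u\mapsto u(\mathbf 0)$ is then immediate from \eqref{eq:point-evaluation-bound}.

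I expect the main obstacle to be the rigorous justification that the even reflection commutes with the mixed derivatives $D_I$ for general elements of $E_n$. The membership $D_Iu\in L^2$ only for products of first derivatives does not give full $H^1$ regularity in each variable, so it is not a standard Sobolev fact. My first approach will be to prove density of the boundary-smooth compactly supported class, using directional translations $u(\cdot+\varepsilon\mathbf 1)$ together with mollification and cutoffs. Since these operations commute with each $D_I$, they converge in every $L^2$ norm simultaneously. The identities can then be established on this class and extended by continuity.
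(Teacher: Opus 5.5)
Your proposal is correct and follows essentially the same route as the paper. Both prove the product-cosine Plancherel identity on a boundary-smooth compactly supported core by applying the one-dimensional even-extension identity in each variable, and extend it by density. Both then use the inversion formula and Cauchy--Schwarz against the weight $\prod_j(1+\xi_j^2)$, where the constants cancel because $\int_0^\infty(1+\xi^2)^{-1}\,d\xi=\pi/2$, and obtain the continuous representative vanishing at infinity from Riemann--Lebesgue applied to the even extension.

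Your density argument (shift by $\varepsilon\mathbf 1$, cut off, mollify) is more explicit than the paper's. One small correction: the cutoff does not literally commute with $D_I$. This is harmless, because the product-rule error terms are bounded by $R^{-|I\setminus J|}\|D_Ju\|_{L^2}$ with $J\subsetneq I$, and all $D_Ju$ lie in $L^2$ by the definition of $E_n$.
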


\begin{proof}
For the tensor-product smooth core $C_c^\infty([0,\infty)^n)$, the one-dimensional cosine identity applied successively in each variable gives
\[
\norm{u}_{E_n}^2=
\int_{\Rp^n}\prod_{j=1}^n(1+\xi_j^2)|\widehatc{u}(\boldsymbol{\xi})|^2\dd\boldsymbol{\xi},
\]
and the identity extends to $E_n$ by completion. The weak mixed derivatives are handled by applying the one-dimensional identity variable by variable, which is precisely the product-cosine Plancherel identity for the factors $D_Iu$. The inverse product cosine formula gives
\[
u(\mathbf p)=\left(\frac2\pi\right)^{n/2}
\int_{\Rp^n}\widehatc{u}(\boldsymbol{\xi})
\prod_{j=1}^n\cos(p_j\xi_j)\dd\boldsymbol{\xi}.
\]
By Cauchy--Schwarz,
\[
|u(\mathbf p)|\le \left(\frac2\pi\right)^{n/2}
\left(\int_{\Rp^n}\prod_{j=1}^n(1+\xi_j^2)|\widehatc{u}|^2\dd\boldsymbol{\xi}\right)^{1/2}
\left(\int_{\Rp^n}\prod_{j=1}^n\frac{\dd\boldsymbol{\xi}}{1+\xi_j^2}\right)^{1/2}.
\]
Since $\int_0^\infty(1+\xi^2)^{-1}\dd\xi=\pi/2$, the constants cancel and \eqref{eq:point-evaluation-bound} follows. The same estimate also gives $\widehatc{u}\in L^1(\Rp^n)$. Hence the inverse product cosine transform has a continuous representative vanishing at infinity, by the Riemann--Lebesgue lemma applied to the even extension of $\widehatc{u}$.
\end{proof}
The embedding $E_n\hookrightarrow C_0(\Rp^n)$ will be used repeatedly in this explicit form.
For the two-dimensional diagonal problem we write
\[
J_{a,c,d}(u)=\int_{\Rp^2}\bigl(u_{xy}^2+a u_x^2+c u_y^2+d u^2\bigr)\dd x\dd y
\]
and denote its bilinear form by
\[
B_{a,c,d}(u,v)=\int_{\Rp^2}\bigl(u_{xy}v_{xy}+a u_xv_x+c u_yv_y+d uv\bigr)\dd x\dd y.
\]
For the non-diagonal two-dimensional form we use
\[
B_{a,b,c,d}(u,v)=\int_{\Rp^2}\bigl(u_{xy}v_{xy}+a u_xv_x+b(u_xv_y+u_yv_x)+c u_yv_y+d uv\bigr)\dd x\dd y.
\]
The diagonal symbol and kernel are
\[
W_{a,c,d}(\xi,\eta)=\xi^2\eta^2+a\xi^2+c\eta^2+d,
\quad 
K_{a,c,d}(x,y)=\frac{4}{\pi^2}\int_0^\infty\int_0^\infty
\frac{\cos(x\xi)\cos(y\eta)}{W_{a,c,d}(\xi,\eta)}\dd\xi\dd\eta.
\]
We shall also use the original Ekeland--Nirenberg $n$-dimensional functional
\[
J_Q(u)=\int_{\Rp^n}\left((D_{[n]}u)^2+
\bigl(Q\widetilde D^n_{n-1}u,\widetilde D^n_{n-1}u\bigr)\right)\dd\mathbf{x},
\]
where $Q$ is symmetric positive definite. Its associated bilinear form is denoted by $B_Q$.

We now record the first and second variation. Let $u_*$ be the minimizer of a positive quadratic functional $J(u)=B(u,u)$ under the constraint $u(0)=1$. The tangent space to the constraint is
\[
T=\{h:h(0)=0\}.
\]
For every $h\in T$,
\[
0=\left.\frac{d}{dt}\right|_{t=0}J(u_*+th)=2B(u_*,h),
\]
so
\begin{equation}\label{eq:first-variation-tangent}
B(u_*,h)=0\qquad (h(0)=0).
\end{equation}
Since $u_*(0)=1$, any test function $v$ can be decomposed as
\[
v=\bigl(v-v(0)u_*\bigr)+v(0)u_*,
\]
and the first term has value $0$ at the corner. Hence \eqref{eq:first-variation-tangent} is equivalent to
\begin{equation}\label{eq:first-variation-all}
B(u_*,v)=J(u_*)v(0)\qquad \text{for all }v.
\end{equation}
This is the source of the $v(0)$ term in the weak Euler--Lagrange identity. It is a Lagrange multiplier term coming from the corner constraint, not a boundary term produced by a free variation.

The second variation on the constraint is even simpler. If $h,k\in T$, then
\[
\delta^2J(u_*)[h,k]=2B(h,k),
\qquad
J(u_*+h)=J(u_*)+J(h).
\]
Thus the constrained Hessian has no kernel on $T$: if $B(h,k)=0$ for every $k\in T$, then taking $k=h$ gives $B(h,h)=0$, hence $h=0$ by positive definiteness. This is the nondegeneracy used implicitly in the uniqueness and stability arguments below.

In the two-dimensional non-diagonal case, the weak identity \eqref{eq:first-variation-all} reads
\begin{equation}\label{eq:weak-nondiagonal}
B_{a,b,c,d}(u,v)=J_{a,b,c,d}(u)v(0,0)\qquad (v\in E_2).
\end{equation}
If $u$ is smooth and decays sufficiently at infinity, integration by parts gives the formal Euler--Lagrange system
\[
u_{xxyy}-a u_{xx}-2b u_{xy}-c u_{yy}+d u=0\qquad \text{in }(\Rpo)^2,
\]
with natural boundary conditions
\[
u_{xyy}(0,y)-a u_x(0,y)-b u_y(0,y)=0\qquad (y>0),
\]
\[
u_{xxy}(x,0)-b u_x(x,0)-c u_y(x,0)=0\qquad (x>0),
\]
and corner conditions
\[
u(0,0)=1,
\qquad
u_{xy}(0,0)=J_{a,b,c,d}(u).
\]
This formal system is included only as a guide; the rigorous statement used below is the weak identity \eqref{eq:weak-nondiagonal}.

\subsection{Cosine transform and the two-dimensional kernel}
For a function $f$ on $\Rp$, use the normalized cosine transform
\[
(Cf)(\xi)=\sqrt{\frac{2}{\pi}}\int_0^\infty f(x)\cos(x\xi)\dd x,
\qquad \xi\ge 0.
\]
It is the Fourier transform of the even extension of $f$ restricted to $[0,\infty)$.

\begin{lemma}[One-dimensional cosine identities]\label{lem:cosine-one}
If $f\in H^1(\Rp)$ and $\lambda>0$, then
\[
\int_0^\infty |f(x)|^2\dd x=\int_0^\infty |Cf(\xi)|^2\dd\xi,
\]
and
\[
\int_0^\infty \bigl(|f'(x)|^2+\lambda |f(x)|^2\bigr)\dd x
=\int_0^\infty (\xi^2+\lambda)|Cf(\xi)|^2\dd\xi.
\]
\end{lemma}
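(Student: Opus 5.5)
The plan is to reduce both identities to the classical Plancherel theorem on $\R$ via the even extension. For $f\in L^2(\Rp)$, let $\tilde f(x)=f(|x|)$ be its even extension. Then $\tilde f\in L^2(\R)$ and $\norm{\tilde f}_{L^2(\R)}^2=2\norm{f}_{L^2(\Rp)}^2$. We use the unitary Fourier transform $\widehat g(\xi)=(2\pi)^{-1/2}\int_\R g(x)e^{-ix\xi}\dd x$. Because $\tilde f$ is even, the sine part cancels, and for $f\in L^1\cap L^2$ we get
\[
\widehat{\tilde f}(\xi)=(2\pi)^{-1/2}\,2\int_0^\infty f(x)\cos(x\xi)\dd x=(Cf)(|\xi|).
\]
Thus $\widehat{\tilde f}$ is even as well. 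Plancherel on $\R$ then gives $2\norm{f}^2=\norm{\widehat{\tilde f}}^2_{L^2(\R)}=2\int_0^\infty|Cf|^2\dd\xi$, which is the first identity. The identity passes from $L^1\cap L^2(\Rp)$ to all of $L^2(\Rp)$ by density. This also defines $C$ as a unitary operator on $L^2(\Rp)$, given by the $L^2$-limit of truncated integrals.

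For the second identity, take $f\in H^1(\Rp)$. Then $\tilde f\in H^1(\R)$ with weak derivative $\tilde f'(x)=\operatorname{sgn}(x)f'(|x|)$. To verify this, pair $\tilde f$ against $\varphi\in C_c^\infty(\R)$ and split the integral at $0$. Since $f$ is absolutely continuous on $\Rp$ with a trace $f(0)$, the boundary terms $\pm f(0)\varphi(0)$ coming from the two half-lines cancel. This cancellation is exactly why the even extension needs no boundary condition; it is where the Neumann-type nature of the cosine transform enters, and it is the only point requiring care. Consequently $\norm{\tilde f'}^2_{L^2(\R)}=2\norm{f'}^2_{L^2(\Rp)}$.

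The standard Fourier characterization of $H^1(\R)$ gives $\widehat{\tilde f'}(\xi)=i\xi\widehat{\tilde f}(\xi)$, so
\[
2\norm{f'}^2=\int_\R\xi^2|\widehat{\tilde f}(\xi)|^2\dd\xi=2\int_0^\infty\xi^2|Cf(\xi)|^2\dd\xi .
\]
Adding $\lambda$ times the first identity gives the weighted identity $\int_0^\infty(|f'|^2+\lambda|f|^2)\dd x=\int_0^\infty(\xi^2+\lambda)|Cf|^2\dd\xi$.
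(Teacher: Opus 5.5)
Your proposal is correct and follows essentially the same route as the paper, which justifies the lemma by a remark: the even extension of $f\in H^1(\Rp)$ lies in $H^1(\R)$, its weak derivative is the odd extension of $f'$ with no boundary Dirac mass, and Plancherel on $\R$ does the rest. You carry this out in more detail, with the normalization check $\widehat{\tilde f}(\xi)=(Cf)(|\xi|)$, the explicit cancellation of the $\pm f(0)\varphi(0)$ terms, and the $L^2$-density definition of $C$.
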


\begin{remark}
This is the standard Plancherel identity for the Fourier cosine transform: the even extension of an $H^1(\Rp)$ function lies in $H^1(\R)$, and its weak derivative is the odd extension of $f'$ with no boundary Dirac mass; see \cite{ErdelyiTables1954}.
\end{remark}

For $u\in E_2$ write
\[
\widehatc{u}(\xi,\eta)=(C_xC_yu)(\xi,\eta),
\qquad \xi,\eta\ge 0.
\]
Set
\[
W_{a,c,d}(\xi,\eta)=\xi^2\eta^2+a\xi^2+c\eta^2+d.
\]

\begin{proposition}[Spectral form and corner evaluation]\label{prop:spectral}
For every $u\in E_2$,
\[
J_{a,c,d}(u)=\int_0^\infty\int_0^\infty W_{a,c,d}(\xi,\eta)|\widehatc{u}(\xi,\eta)|^2\dd\xi\dd\eta.
\]
Moreover,
\[
\int_0^\infty\int_0^\infty \frac{\dd\xi\dd\eta}{W_{a,c,d}(\xi,\eta)}<\infty,
\]
$\widehatc{u}\in L^1(\Rp^2)$, and
\[
u(0,0)=\frac{2}{\pi}\int_0^\infty\int_0^\infty \widehatc{u}(\xi,\eta)\dd\xi\dd\eta.
\]
\end{proposition}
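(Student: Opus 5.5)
The plan is to transfer each term of $J_{a,c,d}$ to the transform side with the product cosine Plancherel identity, then get integrability of $1/W_{a,c,d}$ from an elementary lower bound, and finally deduce the corner formula from the inversion formula already used in Lemma~\ref{lem:mixed-embedding}.

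\textbf{Spectral form.} For $u\in E_2$, Lemma~\ref{lem:mixed-embedding} with $n=2$ identifies $u$ with $\widehatc{u}$ satisfying $\int(1+\xi^2)(1+\eta^2)|\widehatc{u}|^2<\infty$. I would first work on the smooth tensor core $C_c^\infty([0,\infty)^2)$. There I apply Lemma~\ref{lem:cosine-one} in $x$ for fixed $y$, then in $y$ for fixed $\xi$. This gives
\[
\norm{u_x}^2=\int\xi^2|\widehatc{u}|^2,\quad
\norm{u_y}^2=\int\eta^2|\widehatc{u}|^2,\quad
\norm{u_{xy}}^2=\int\xi^2\eta^2|\widehatc{u}|^2,\quad
\norm{u}^2=\int|\widehatc{u}|^2 .
\]
The mechanism is that $C_x(\partial_x f)$ is, up to sign, the sine transform of the derivative, and the sine transform is also an isometry. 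There are no boundary terms, because the even extension has no jump.

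Forming the combination with weights $1,a,c,d$ gives the identity on the core. Both sides are continuous with respect to the $E_2$ norm, since $W_{a,c,d}\le \max(1,a,c,d)(1+\xi^2)(1+\eta^2)$. The identity therefore extends by density, which is the density step already invoked in Lemma~\ref{lem:mixed-embedding}.

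\textbf{Integrability of $1/W$.} With $m=\min(1,a,c,d)>0$, expanding the product shows
\[
W_{a,c,d}(\xi,\eta)\ge m(1+\xi^2)(1+\eta^2).
\]
Hence
\[
\int\!\!\int W_{a,c,d}^{-1}\le m^{-1}(\pi/2)^2<\infty .
\]

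\textbf{$L^1$ and corner formula.} Cauchy--Schwarz gives
\[
\int|\widehatc{u}|\le \Bigl(\int W|\widehatc{u}|^2\Bigr)^{1/2}\Bigl(\int W^{-1}\Bigr)^{1/2}=J_{a,c,d}(u)^{1/2}\Bigl(\int W^{-1}\Bigr)^{1/2}<\infty ,
\]
so $\widehatc{u}\in L^1(\Rp^2)$. The inverse product cosine formula from the proof of Lemma~\ref{lem:mixed-embedding} then holds pointwise for the continuous representative, since the inverse transform of an $L^1$ function is continuous. Evaluating at $\mathbf p=0$, where $\cos 0=1$ and $(2/\pi)^{n/2}=2/\pi$ for $n=2$, yields
\[
u(0,0)=\frac2\pi\int\!\!\int\widehatc{u}\dd\xi\dd\eta .
\]

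\textbf{Main obstacle.} The only delicate point is justifying the Plancherel identities for weak mixed derivatives in $E_2$ rather than on the smooth core. This amounts to density of the core in $E_2$ and closedness of the transform description, which I take from Lemma~\ref{lem:mixed-embedding}. A second, smaller point is that the continuous representative agrees with the $L^1$ inversion integral; this follows because the two coincide a.e. and both are continuous.
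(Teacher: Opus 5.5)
Your proof is correct and follows the paper's route: Lemma~\ref{lem:cosine-one} in each variable, integrability of $W_{a,c,d}^{-1}$, Cauchy--Schwarz, and inversion at the corner. The only variation is that you get integrability from $W_{a,c,d}\ge\min\{1,a,c,d\}(1+\xi^2)(1+\eta^2)$, the device the paper itself uses in Lemma~\ref{lem:nd-reciprocal-integrable}, whereas the paper integrates out $\eta$ exactly to get $\pi/\bigl(2\sqrt{\xi^2+c}\sqrt{a\xi^2+d}\bigr)$ (the same computation that later gives Proposition~\ref{prop:onedim}). One harmless slip in your side remark: it is $\xi\,C_xf$, not $C_x(\partial_xf)$, that equals the sine transform of $\partial_xf$ up to sign; the argument is unaffected because you only invoke Lemma~\ref{lem:cosine-one}.
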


\begin{proof}
The energy identity follows by applying Lemma \ref{lem:cosine-one} in each variable.  The mixed derivative gives the factor $\xi^2\eta^2$, the $x$-derivative gives $\xi^2$, the $y$-derivative gives $\eta^2$, and the $L^2$ term gives $1$.

For the reciprocal estimate, integrate first in $\eta$:
\[
\int_0^\infty\frac{\dd\eta}{(\xi^2+c)\eta^2+a\xi^2+d}
=\frac{\pi}{2\sqrt{\xi^2+c}\sqrt{a\xi^2+d}}.
\]
This is bounded near $\xi=0$ and is $O(\xi^{-2})$ as $\xi\to\infty$.  The outer integral therefore converges.

Cauchy--Schwarz gives
\[
\int |\widehatc{u}|\le
\left(\int W_{a,c,d}|\widehatc{u}|^2\right)^{1/2}
\left(\int W_{a,c,d}^{-1}\right)^{1/2}<\infty.
\]
The inverse product cosine transform may therefore be evaluated at $(0,0)$, giving the displayed formula for the corner value.
\end{proof}

\begin{proposition}[Kernel formula for the minimizer]\label{prop:minimizer-kernel}
The kernel $K_{a,c,d}$ in \eqref{eq:kernel-intro} is finite for every $(x,y)\in\Rp^2$, and $K_{a,c,d}(0,0)>0$.  The unique minimizer under $u(0,0)=1$ is
\[
u_{a,c,d}(x,y)=\frac{K_{a,c,d}(x,y)}{K_{a,c,d}(0,0)}.
\]
In particular, the sign of the minimizer is the sign of the kernel.
\end{proposition}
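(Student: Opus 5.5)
The plan is to realize the minimizer as the normalized Riesz representative of the corner evaluation in the Hilbert space $(E_2,B_{a,c,d})$, and to compute that representative explicitly on the cosine side using Proposition \ref{prop:spectral}.

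\textbf{Step 1: Hilbert structure.} Since $a,c,d>0$, the symbol satisfies
\[
W_{a,c,d}(\xi,\eta)\asymp(1+\xi^2)(1+\eta^2)
\]
with constants depending only on $a,c,d$. By Lemma \ref{lem:mixed-embedding}, $B_{a,c,d}$ is therefore an inner product on $E_2$ equivalent to $\norm{\cdot}_{E_2}^2$, so $(E_2,B_{a,c,d})$ is a Hilbert space. By \eqref{eq:point-evaluation-bound}, the functional $\ell(u)=u(0,0)$ is continuous. Riesz then gives a unique $k\in E_2$ with $B_{a,c,d}(k,v)=v(0,0)$ for all $v\in E_2$.

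\textbf{Step 2: Compute $k$.} Finiteness of $K_{a,c,d}$ comes first: the integrand is bounded by $W_{a,c,d}^{-1}$, which is integrable by Proposition \ref{prop:spectral}. Hence $K_{a,c,d}$ is finite, continuous and bounded by $K_{a,c,d}(0,0)$. Moreover
\[
K_{a,c,d}(0,0)=\frac{4}{\pi^2}\int\!\!\int W_{a,c,d}^{-1}\dd\xi\dd\eta>0 .
\]
Next, define $k$ through its transform, $\widehatc{k}=\tfrac{2}{\pi}W_{a,c,d}^{-1}$. It lies in the weighted space of Lemma \ref{lem:mixed-embedding}, because
\[
\int W_{a,c,d}\,|\widehatc{k}|^2=\frac{4}{\pi^2}\int W_{a,c,d}^{-1}<\infty ,
\]
so $k\in E_2$. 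Polarizing the energy identity of Proposition \ref{prop:spectral} gives
\[
B_{a,c,d}(k,v)=\int W_{a,c,d}\,\widehatc{k}\,\widehatc{v}=\frac{2}{\pi}\int\widehatc{v}=v(0,0),
\]
where the last equality is the corner formula of Proposition \ref{prop:spectral}. Finally, applying the inverse product cosine transform with normalization $(2/\pi)^{n/2}$, $n=2$, yields $k=K_{a,c,d}$ pointwise. This evaluation is legitimate since $\widehatc{k}\in L^1$.

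\textbf{Step 3: Minimization.} Let $u_*=K_{a,c,d}/K_{a,c,d}(0,0)$, so $u_*(0,0)=1$. For any admissible $u$, set $h=u-u_*$, so $h(0,0)=0$. Then
\[
B_{a,c,d}(u_*,h)=\frac{h(0,0)}{K_{a,c,d}(0,0)}=0 ,
\]
and hence $J(u)=J(u_*)+J(h)\ge J(u_*)$, with equality only if $h=0$ by positive definiteness. This gives both minimality and uniqueness, consistent with the Ekeland--Nirenberg uniqueness result. Since $K_{a,c,d}(0,0)>0$, the sign statement follows at once.

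The only delicate point is the normalization bookkeeping in Step 2: matching the factor $4/\pi^2$ in the kernel with the factor $2/\pi$ in the corner formula and the inverse transform. No step presents a genuine obstacle.
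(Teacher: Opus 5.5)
Your proposal is correct and is essentially the paper's argument. The paper also works on the cosine side with the spectral form and corner formula of Proposition~\ref{prop:spectral}: it derives $1\le\tfrac{2}{\pi}J_{a,c,d}(u)^{1/2}\bigl(\int\!\!\int W_{a,c,d}^{-1}\bigr)^{1/2}$ by Cauchy--Schwarz and reads off $\widehatc{u}\propto W_{a,c,d}^{-1}$ from the equality case, which is your Riesz representative $\widehatc{k}=\tfrac{2}{\pi}W_{a,c,d}^{-1}$ written as an extremal inequality. Your version differs only in packaging, and it is slightly more explicit: you check $k\in E_2$ and $B_{a,c,d}(k,v)=v(0,0)$ directly, then use the orthogonal splitting $J(u)=J(u_*)+J(h)$, and your normalization $\tfrac{2}{\pi}\cdot\tfrac{2}{\pi}=\tfrac{4}{\pi^2}$ is right.
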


\begin{proof}
Absolute convergence follows from the reciprocal estimate in Proposition \ref{prop:spectral}.  If $u(0,0)=1$, then the corner formula and Cauchy--Schwarz give
\[
1\le \frac{2}{\pi}J_{a,c,d}(u)^{1/2}
\left(\int_0^\infty\int_0^\infty W_{a,c,d}^{-1}\dd\xi\dd\eta\right)^{1/2}.
\]
Equality holds exactly when $\widehatc{u}$ is a positive constant multiple of $W_{a,c,d}^{-1}$.  Inverting the product cosine transform gives a multiple of $K_{a,c,d}$.  The normalization $u(0,0)=1$ fixes the constant and gives the formula above.  Strict positivity of $K_{a,c,d}(0,0)$ is immediate from the positive integrand at the corner.
\end{proof}

\begin{proposition}[Scaling]\label{prop:scaling}
Let $K_m=K_{1,1,m}$.  For every $a,c,d>0$,
\[
K_{a,c,d}(x,y)=\frac1{\sqrt{ac}}K_{d/(ac)}(\sqrt c x,\sqrt a y).
\]
Thus the two-dimensional sign problem depends only on $m=d/(ac)$.
\end{proposition}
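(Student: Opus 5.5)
The plan is a change of variables in the defining integral. In the formula for $K_{a,c,d}(x,y)$, substitute $\xi=\sqrt a\,s$ and $\eta=\sqrt c\,t$ with $s,t\ge0$, so that $\dd\xi\dd\eta=\sqrt{ac}\dd s\dd t$. The symbol then becomes
\[
W_{a,c,d}(\sqrt a s,\sqrt c t)=ac\,s^2t^2+a^2s^2+c^2t^2+d .
\]
This is not yet of the form $ac\,W_{1,1,m}$, so the substitution must be chosen differently.

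Instead, take $\xi=\sqrt c\,s$ and $\eta=\sqrt a\,t$. Then
\[
\xi^2\eta^2+a\xi^2+c\eta^2+d=ac\,s^2t^2+ac\,s^2+ac\,t^2+d=ac\,W_{1,1,m}(s,t),\qquad m=\frac{d}{ac}.
\]
The Jacobian is $\sqrt{ac}$. The cosines become $\cos(\sqrt c\,x\,s)\cos(\sqrt a\,y\,t)$. Therefore
\[
K_{a,c,d}(x,y)=\frac4{\pi^2}\int_0^\infty\!\!\int_0^\infty\frac{\cos(\sqrt c x s)\cos(\sqrt a y t)\,\sqrt{ac}}{ac\,W_{1,1,m}(s,t)}\dd s\dd t=\frac1{\sqrt{ac}}K_m(\sqrt c x,\sqrt a y).
\]

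The only point that needs justification is the legitimacy of the substitution. By Proposition \ref{prop:spectral}, $W_{a,c,d}^{-1}\in L^1(\Rp^2)$, so the integral is absolutely convergent. The linear change of variables is then valid by Fubini and the standard Lebesgue change-of-variables formula. There is no real obstacle here; the only step requiring care is choosing the pairing $\xi\leftrightarrow\sqrt c$, $\eta\leftrightarrow\sqrt a$ so that every coefficient becomes $ac$.

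Finally, the map $(x,y)\mapsto(\sqrt c x,\sqrt a y)$ is a bijection of $\Rp^2$, and the prefactor $1/\sqrt{ac}$ is positive. Hence $K_{a,c,d}$ is positive on $\Rp^2$ if and only if $K_m$ is. By Proposition \ref{prop:minimizer-kernel}, the same holds for the minimizer. So the sign problem depends only on $m=d/(ac)$.

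As a consistency check, the critical case $m=1$ reproduces the factorized formula in the Remark after Theorem \ref{thm:main}.
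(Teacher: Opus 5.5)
Your proof is correct and is the same as the paper's: the substitution $\xi=\sqrt c\,s$, $\eta=\sqrt a\,t$ turns the symbol into $ac\,W_{1,1,d/(ac)}(s,t)$ with Jacobian $\sqrt{ac}$, and this gives the formula directly. The discarded first substitution is harmless. Your added remarks (absolute convergence via Proposition~\ref{prop:spectral}, the bijection of $\Rp^2$, and the $m=1$ consistency check) are valid extras.
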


\begin{proof}
In the kernel integral set $\xi=\sqrt c\,r$ and $\eta=\sqrt a\,s$.  The denominator becomes
\[
ac(r^2s^2+r^2+s^2+d/(ac)),
\]
and the Jacobian is $\sqrt{ac}$.  This gives the formula.
\end{proof}

\begin{lemma}[A standard cosine integral]\label{lem:standard-cosine}
For $\Lambda>0$ and $y\ge0$,
\[
\int_0^\infty\frac{\cos(y\eta)}{\eta^2+\Lambda^2}\dd\eta=\frac{\pi}{2\Lambda}e^{-\Lambda y}.
\]
\end{lemma}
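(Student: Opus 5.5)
The plan is to reduce the integral to the standard Fourier transform of a Lorentzian and evaluate that by residues. Since the integrand is even in $\eta$ and $\sin(y\eta)/(\eta^2+\Lambda^2)$ is odd and absolutely integrable, I would first write
\[
\int_0^\infty\frac{\cos(y\eta)}{\eta^2+\Lambda^2}\dd\eta
=\frac12\int_{-\infty}^{\infty}\frac{e^{iy\eta}}{\eta^2+\Lambda^2}\dd\eta ,
\]
with absolute convergence guaranteed by the bound $(\eta^2+\Lambda^2)^{-1}\in L^1(\R)$.

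For $y>0$ I would close the contour in the upper half-plane with the semicircle $|\zeta|=R$, $\operatorname{Im}\zeta\ge0$. On that arc $|e^{iy\zeta}|\le1$ and $|\zeta^2+\Lambda^2|\ge R^2-\Lambda^2$, so the arc contributes $O(R^{-1})\to0$. The only enclosed pole is the simple pole at $\zeta=i\Lambda$, with residue $e^{-\Lambda y}/(2i\Lambda)$. Multiplying by $2\pi i$ gives $\pi e^{-\Lambda y}/\Lambda$, and halving yields $\frac{\pi}{2\Lambda}e^{-\Lambda y}$.

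The case $y=0$ I would handle directly from $\int_0^\infty(\eta^2+\Lambda^2)^{-1}\dd\eta=\frac{\pi}{2\Lambda}$ (the arctangent). Alternatively it follows by continuity in $y$, using dominated convergence with the dominating function $(\eta^2+\Lambda^2)^{-1}$. Either way it matches the formula at $y=0$.

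As an independent check, or an alternative route avoiding complex analysis, I would verify the inverse statement. The cosine transform of $e^{-\Lambda x}$ is
\[
\int_0^\infty e^{-\Lambda x}\cos(x\eta)\dd x=\frac{\Lambda}{\eta^2+\Lambda^2}
\]
by elementary integration. Since $e^{-\Lambda x}$ is continuous and integrable and its transform is integrable, the cosine inversion formula then recovers exactly the claimed identity.

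There is no real obstacle here. The only point needing care is justifying that the arc contribution vanishes, and that bound is the elementary $O(R^{-1})$ estimate above.
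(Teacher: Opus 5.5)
Your proof is correct and matches the paper's: the paper simply cites the classical residue computation for the Poisson kernel on the line, which is your upper-half-plane contour argument with the single pole at $i\Lambda$. Your arc estimate, residue evaluation, $y=0$ case, and the cosine-inversion cross-check are all sound.
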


\begin{proof}
This is the classical residue computation for the Poisson kernel on the line; see \cite{ErdelyiTables1954}.
\end{proof}

\begin{proposition}[One-dimensional kernel formula]\label{prop:onedim}
For every $a,c,d>0$ and $(x,y)\in\Rp^2$,
\[
K_{a,c,d}(x,y)=\frac{2}{\pi}\int_0^\infty
\frac{\cos(x\xi)\exp\left(-y\sqrt{\frac{a\xi^2+d}{\xi^2+c}}\right)}
{\sqrt{\xi^2+c}\sqrt{a\xi^2+d}}\dd\xi.
\]
\end{proposition}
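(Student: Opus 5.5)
We do the inner $\eta$-integral in closed form and then apply Fubini. First write the diagonal symbol as a quadratic in $\eta$ with $\xi$ fixed:
\[
W_{a,c,d}(\xi,\eta)=(\xi^2+c)\eta^2+(a\xi^2+d)=(\xi^2+c)\bigl(\eta^2+\Lambda(\xi)^2\bigr),
\qquad
\Lambda(\xi)=\sqrt{\frac{a\xi^2+d}{\xi^2+c}}>0 .
\]
For each fixed $\xi\ge0$, Lemma \ref{lem:standard-cosine} with this $\Lambda$ gives
\[
\int_0^\infty\frac{\cos(y\eta)}{W_{a,c,d}(\xi,\eta)}\dd\eta
=\frac{1}{\xi^2+c}\cdot\frac{\pi}{2\Lambda(\xi)}e^{-\Lambda(\xi)y}
=\frac{\pi}{2}\,\frac{e^{-y\Lambda(\xi)}}{\sqrt{\xi^2+c}\sqrt{a\xi^2+d}}.
\]
Here we used $(\xi^2+c)\Lambda(\xi)=\sqrt{\xi^2+c}\sqrt{a\xi^2+d}$.

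Next we justify the iterated integration. The integrand of \eqref{eq:kernel-intro} is bounded in absolute value by $W_{a,c,d}^{-1}$. By Proposition \ref{prop:spectral}, $W_{a,c,d}^{-1}$ is integrable on $\Rp^2$. So the double integral converges absolutely, and Fubini--Tonelli lets us integrate first in $\eta$ and then in $\xi$.

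Substituting the inner integral gives
\[
K_{a,c,d}(x,y)=\frac{4}{\pi^2}\cdot\frac{\pi}{2}\int_0^\infty\frac{\cos(x\xi)\,e^{-y\Lambda(\xi)}}{\sqrt{\xi^2+c}\sqrt{a\xi^2+d}}\dd\xi,
\]
and $\tfrac{4}{\pi^2}\cdot\tfrac{\pi}{2}=\tfrac{2}{\pi}$, which is the claimed formula. The resulting one-dimensional integral converges absolutely, since its integrand is $O(\xi^{-2})$ at infinity and bounded near $0$.

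The only step needing care is the Fubini justification. It is already provided by the reciprocal estimate in Proposition \ref{prop:spectral}, so we expect no real obstacle.
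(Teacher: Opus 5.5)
Your proposal is correct and follows the paper's proof: the same factorization $W_{a,c,d}=(\xi^2+c)\bigl(\eta^2+\tfrac{a\xi^2+d}{\xi^2+c}\bigr)$, followed by Lemma~\ref{lem:standard-cosine} for the inner $\eta$-integral. The one difference is that you spell out the Fubini step using the integrability of $W_{a,c,d}^{-1}$ from Proposition~\ref{prop:spectral}; the paper leaves this implicit.
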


\begin{proof}
For fixed $\xi$ write
\[
\xi^2\eta^2+a\xi^2+c\eta^2+d=(\xi^2+c)\left(\eta^2+\frac{a\xi^2+d}{\xi^2+c}\right).
\]
Lemma \ref{lem:standard-cosine} evaluates the inner $\eta$-integral and gives the displayed formula.
\end{proof}

\subsection{Bessel and Laplace identities}
We use the Bessel functions
\[
J_0(z)=\sum_{k=0}^\infty\frac{(-1)^k(z^2/4)^k}{(k!)^2},
\qquad
I_0(z)=\sum_{k=0}^\infty\frac{(z^2/4)^k}{(k!)^2}.
\]

\begin{lemma}[Two Laplace identities]\label{lem:bessel-laplace}
Let $\alpha>0$, $v\ge0$, and $\lambda\ge0$.  Then
\[
\int_0^\infty e^{-\alpha u}I_0(2\sqrt{\lambda uv})\dd u=\frac1\alpha e^{\lambda v/\alpha},
\]
and
\[
\int_0^\infty e^{-\alpha u}J_0(2\sqrt{\lambda uv})\dd u=\frac1\alpha e^{-\lambda v/\alpha}.
\]
\end{lemma}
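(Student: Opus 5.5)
The plan is to expand the Bessel function in its power series and integrate term by term against $e^{-\alpha u}$ on $(0,\infty)$. The justification for interchanging sum and integral will be handled separately for the two identities.

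For the $I_0$ identity, every term of the series
\[
I_0(2\sqrt{\lambda uv})=\sum_{k\ge0}\frac{(\lambda v)^k u^k}{(k!)^2}
\]
is nonnegative. Tonelli's theorem therefore allows termwise integration with no further hypothesis. Using $\int_0^\infty e^{-\alpha u}u^k\dd u=k!/\alpha^{k+1}$, the integral becomes
\[
\sum_{k\ge0}\frac{(\lambda v)^k}{k!\,\alpha^{k+1}}=\frac1\alpha e^{\lambda v/\alpha}.
\]
In particular, both sides are finite.

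For the $J_0$ identity, the series
\[
J_0(2\sqrt{\lambda uv})=\sum_{k\ge0}\frac{(-1)^k(\lambda v)^k u^k}{(k!)^2}
\]
is dominated termwise in absolute value by the $I_0$ series. The sum of the absolute values is exactly $I_0(2\sqrt{\lambda uv})$, whose Laplace integral was just shown to be finite. Fubini's theorem, in its dominated-convergence form for series, then justifies termwise integration. The same computation gives
\[
\sum_{k\ge0}\frac{(-\lambda v)^k}{k!\,\alpha^{k+1}}=\frac1\alpha e^{-\lambda v/\alpha}.
\]

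The only point needing care is this domination step. It is settled by proving the $I_0$ case first and reusing its finiteness, which is why I would order the argument that way. The degenerate cases $\lambda v=0$ reduce to $\int_0^\infty e^{-\alpha u}\dd u=1/\alpha$ and are covered by the same formulas.
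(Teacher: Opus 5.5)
Your proposal is correct and follows the paper's proof: termwise integration of the $I_0$ series by Tonelli, then the $J_0$ case justified by domination from the already-established finite $I_0$ Laplace integral. The only cosmetic difference is that the paper phrases the $J_0$ step as dominated convergence for the partial sums (each bounded by $I_0(2\sqrt{\lambda uv})$), while you invoke Fubini for series via $\sum_k\int|f_k|<\infty$; these are the same argument.
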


\begin{proof}
For $I_0$, insert the power series and use Tonelli's theorem:
\[
\int_0^\infty e^{-\alpha u}\sum_{k=0}^\infty\frac{(\lambda uv)^k}{(k!)^2}\dd u
=\sum_{k=0}^\infty\frac{(\lambda v)^k}{(k!)^2}\frac{k!}{\alpha^{k+1}}
=\frac1\alpha e^{\lambda v/\alpha}.
\]
For $J_0$, the same computation is first done for the partial sums.  The partial sums are dominated by $I_0(2\sqrt{\lambda uv})$, and the first identity makes this dominating function integrable in $u$.  Dominated convergence then gives the identity for $J_0$.
\end{proof}

\subsection{Analytic continuation and branch cuts}
The supercritical argument uses one standard contour fact.  If an integrand is analytic in an upper half-plane with a finite slit removed, decays on the large semicircle, and has integrable square-root singularities at the endpoints of the slit, then the real-line integral equals the jump integral across the slit.  We shall use this in Section 4 and verify all hypotheses directly there.

\begin{remark}
The branch cut in our problem is not artificial.  For $m>1$ the analytic continuation of
\[
\lambda_m(z)=\sqrt{\frac{z^2+m}{z^2+1}}
\]
has a natural cut on the imaginary segment $[i,i\sqrt m]$.  This cut is exactly where the supercritical sign oscillation is born.
\end{remark}

\section{Bessel--Laplace representations and the threshold}

The reciprocal symbol can be written as
\[
M_{a,c,d}(s,t)=\frac1{st+as+ct+d}
=\frac1{(s+c)(t+a)+(d-ac)}.
\]
This identity already displays the threshold $d=ac$.  The kernel is the product cosine transform of $M_{a,c,d}(\xi^2,\eta^2)$.

\begin{proposition}[Bessel--Laplace representations]\label{prop:bessel-rep}
Let $a,c,d>0$.

If $d\le ac$ and $\nu=ac-d$, then
\[
M_{a,c,d}(s,t)=\int_0^\infty\int_0^\infty e^{-su-tv}e^{-cu-av}I_0(2\sqrt{\nu uv})\dd u\dd v.
\]

If $d>ac$ and $\mu=d-ac$, then
\[
M_{a,c,d}(s,t)=\int_0^\infty\int_0^\infty e^{-su-tv}e^{-cu-av}J_0(2\sqrt{\mu uv})\dd u\dd v.
\]
\end{proposition}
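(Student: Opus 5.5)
The plan is to integrate in $u$ first and then in $v$, using Lemma~\ref{lem:bessel-laplace} for the inner integral and an elementary exponential integral for the outer one. Throughout, $s,t\ge0$; this is the range needed, since later $s=\xi^2$ and $t=\eta^2$.

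\emph{Subcritical case $d\le ac$, $\nu=ac-d\ge0$.} The integrand is nonnegative, so Tonelli's theorem lets us integrate in $u$ first. Fix $v\ge0$ and apply the $I_0$ identity of Lemma~\ref{lem:bessel-laplace} with $\alpha=s+c>0$ and $\lambda=\nu$:
\[
\int_0^\infty e^{-(s+c)u}I_0(2\sqrt{\nu uv})\dd u=\frac{1}{s+c}e^{\nu v/(s+c)}.
\]
What remains is
\[
\frac1{s+c}\int_0^\infty \exp\Bigl(-\Bigl(t+a-\frac{\nu}{s+c}\Bigr)v\Bigr)\dd v .
\]
This converges, with value $\bigl((s+c)(t+a)-\nu\bigr)^{-1}$, provided $(s+c)(t+a)>\nu$. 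That inequality holds because
\[
(s+c)(t+a)\ge ac=\nu+d>\nu .
\]
Since $(s+c)(t+a)-\nu=(s+c)(t+a)+(d-ac)$, the result equals $M_{a,c,d}(s,t)$. When $\nu=0$ we have $I_0(0)=1$ and the formula is just a product of two exponential integrals, which is consistent.

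\emph{Supercritical case $d>ac$, $\mu=d-ac>0$.} Here $J_0$ changes sign, so Fubini must be justified. We use $|J_0(z)|\le 1$, which follows from the integral representation $J_0(z)=\frac1\pi\int_0^\pi\cos(z\sin\theta)\,d\theta$. This bound makes the integrand dominated by
\[
e^{-(s+c)u-(t+a)v},
\]
which is integrable on $\Rp^2$ because $c,a>0$. With Fubini available, the $J_0$ identity of Lemma~\ref{lem:bessel-laplace} gives the inner integral
\[
\frac{1}{s+c}e^{-\mu v/(s+c)}.
\]
The outer integral is then
\[
\frac{1}{s+c}\cdot\frac{1}{t+a+\mu/(s+c)}=\frac1{(s+c)(t+a)+\mu},
\]
which is again $M_{a,c,d}(s,t)$.

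The only real obstacle is the justification of the order of integration and of convergence. In the $I_0$ case this is the positivity margin $(s+c)(t+a)>\nu$, which is exactly where $d>0$ enters. In the $J_0$ case it is the uniform bound on $J_0$. If one prefers to avoid the bound $|J_0|\le1$, an alternative is to dominate $|J_0|$ by $I_0$ and check integrability of $e^{-(s+c)u-(t+a)v}I_0(2\sqrt{\mu uv})$ by the subcritical computation. This needs $(s+c)(t+a)>\mu$, which holds for $s,t$ large but not uniformly, so the bound $|J_0|\le1$ is the cleaner route.
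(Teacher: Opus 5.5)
Your proposal is correct and is essentially the paper's proof: you apply Lemma~\ref{lem:bessel-laplace} in $u$ with $\alpha=s+c$, then evaluate the elementary exponential integral in $v$ to get $1/((s+c)(t+a)-\nu)$, respectively $1/((s+c)(t+a)+\mu)$. Your added justifications (the margin $(s+c)(t+a)\ge ac>\nu$ for convergence of the $v$-integral, and the bound $|J_0|\le 1$ licensing Fubini in the sign-changing case) supply details the paper leaves implicit.
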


\begin{proof}
We prove the first formula.  Put $\alpha=s+c$ and $\beta=t+a$.  Lemma \ref{lem:bessel-laplace} gives
\[
\int_0^\infty e^{-\alpha u}I_0(2\sqrt{\nu uv})\dd u=\frac1\alpha e^{\nu v/\alpha}.
\]
The remaining $v$-integral equals
\[
\frac1\alpha\int_0^\infty e^{-(\beta-\nu/\alpha)v}\dd v
=\frac1{\alpha\beta-\nu}
=\frac1{st+as+ct+d}.
\]
The supercritical formula is identical, using the $J_0$ identity instead.
\end{proof}

\begin{corollary}[Positive kernel formula below the threshold]\label{cor:positive-laplace}
If $d\le ac$ and $\nu=ac-d$, then
\[
K_{a,c,d}(x,y)=\frac1\pi\int_0^\infty\int_0^\infty
\frac{e^{-cu-av-x^2/(4u)-y^2/(4v)}}{\sqrt{uv}}
I_0(2\sqrt{\nu uv})\dd u\dd v.
\]
In particular, $K_{a,c,d}(x,y)>0$ for every $(x,y)\in\Rp^2$.
\end{corollary}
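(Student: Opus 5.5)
We will derive the formula by combining the subcritical representation in Proposition~\ref{prop:bessel-rep} with the Gaussian cosine transform, then exchange the order of integration. Start from the definition of $K_{a,c,d}$ as the product cosine transform of $M_{a,c,d}(\xi^2,\eta^2)=W_{a,c,d}(\xi,\eta)^{-1}$. With $\nu=ac-d\ge0$, substitute
\[
\frac1{W_{a,c,d}(\xi,\eta)}=\int_0^\infty\int_0^\infty e^{-\xi^2u-\eta^2v}e^{-cu-av}I_0(2\sqrt{\nu uv})\dd u\dd v
\]
into the kernel integral \eqref{eq:kernel-intro}. Formally, the $(\xi,\eta)$-integration can then be done first using the classical identity
\[
\int_0^\infty e^{-u\xi^2}\cos(x\xi)\dd\xi=\frac12\sqrt{\frac{\pi}{u}}\,e^{-x^2/(4u)},
\]
and its analogue in $\eta$. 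The constant is $\frac{4}{\pi^2}\cdot\frac{\pi}{4\sqrt{uv}}=\frac{1}{\pi\sqrt{uv}}$, which is exactly the displayed formula.

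The main obstacle is justifying Fubini. The oscillating factor $\cos(x\xi)\cos(y\eta)$ is bounded by $1$, so it suffices to check that the integrand with the cosines removed is absolutely integrable over $(\xi,\eta,u,v)\in(\Rpo)^4$. Because that integrand is nonnegative, Tonelli's theorem lets us integrate in $(u,v)$ first. This returns $W_{a,c,d}^{-1}$, and by Proposition~\ref{prop:spectral} we have $\int\!\!\int W_{a,c,d}^{-1}<\infty$. Hence the full four-fold integral is absolutely convergent, and Fubini permits integrating in $(\xi,\eta)$ first instead. This also shows that the resulting $(u,v)$-integral converges absolutely, since it is dominated by its value at $x=y=0$, namely $K_{a,c,d}(0,0)<\infty$.

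Positivity then follows directly from the formula. For every $(x,y)\in\Rp^2$, the integrand $\frac{1}{\sqrt{uv}}e^{-cu-av-x^2/(4u)-y^2/(4v)}I_0(2\sqrt{\nu uv})$ is strictly positive on $(\Rpo)^2$, because $I_0\ge1$ on $[0,\infty)$. It is also continuous, so the integral is strictly positive. This argument covers the critical case $\nu=0$, where $I_0\equiv1$ and the integral factorizes. As a check, it then reproduces $\frac{1}{\sqrt{ac}}e^{-\sqrt c x-\sqrt a y}$ by the standard Laplace integral $\int_0^\infty u^{-1/2}e^{-cu-x^2/(4u)}\dd u=\sqrt{\pi/c}\,e^{-\sqrt c x}$.
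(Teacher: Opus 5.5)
Your proposal is correct and follows the paper's route: insert the subcritical Bessel--Laplace representation from Proposition~\ref{prop:bessel-rep}, evaluate the Gaussian cosine integrals in $(\xi,\eta)$, and read off positivity from the positive integrand. The only difference is how the interchange is justified. The paper truncates in $(u,v)$ and dominates using $I_0(z)\le e^z$ together with $2\sqrt{\nu uv}\le\sqrt{\nu/(ac)}\,(cu+av)$, whereas you apply Tonelli to the nonnegative fourfold integrand, which reduces absolute convergence directly to $W_{a,c,d}^{-1}\in L^1$ from Proposition~\ref{prop:spectral}; this is a valid and somewhat cleaner alternative.
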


\begin{proof}
Insert the subcritical representation of $M_{a,c,d}$ into the kernel formula and use the Gaussian cosine integral
\[
\int_0^\infty e^{-u\xi^2}\cos(x\xi)\dd\xi=\frac{\sqrt\pi}{2\sqrt u}e^{-x^2/(4u)}.
\]
All factors in the final integral are positive.  The interchange of integrals follows by truncating in $(u,v)$ and then using dominated convergence.  A simple domination is obtained from $I_0(z)\le e^z$ and
\[
2\sqrt{\nu uv}\le \sqrt{\frac{\nu}{ac}}(cu+av),
\]
where $\sqrt{\nu/(ac)}<1$ because $d>0$.
\end{proof}

\begin{proposition}[Subcritical positivity]\label{prop:subcrit-pos}
If $d\le ac$, then the minimizer $u_{a,c,d}$ is strictly positive on $\Rp^2$.
\end{proposition}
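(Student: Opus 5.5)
The plan is to combine Proposition \ref{prop:minimizer-kernel} with Corollary \ref{cor:positive-laplace}. By Proposition \ref{prop:minimizer-kernel}, the unique minimizer is
\[
u_{a,c,d}=\frac{K_{a,c,d}}{K_{a,c,d}(0,0)},
\]
with $K_{a,c,d}(0,0)>0$. The sign of $u_{a,c,d}$ at any point of $\Rp^2$ is therefore the sign of $K_{a,c,d}$ there. Here $u_{a,c,d}$ is understood as the continuous representative from Lemma~\ref{lem:mixed-embedding}, so the boundary axes are included. For the identification to hold pointwise, and not just almost everywhere, I first check that $K_{a,c,d}$ is continuous on $\Rp^2$. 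This follows by dominated convergence, since $W_{a,c,d}^{-1}\in L^1$ by Proposition \ref{prop:spectral} and the cosines are bounded by $1$. The continuous representative of the minimizer is then exactly $K_{a,c,d}/K_{a,c,d}(0,0)$ at every point.

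Next I invoke Corollary \ref{cor:positive-laplace}. For $d\le ac$ and $\nu=ac-d\ge0$, it writes $K_{a,c,d}(x,y)$ as an absolutely convergent integral of the function
\[
\frac{e^{-cu-av-x^2/(4u)-y^2/(4v)}}{\sqrt{uv}}\,I_0(2\sqrt{\nu uv}),
\]
which is strictly positive on $(0,\infty)^2$, because $I_0\ge1$ and the exponential never vanishes. An integral of a strictly positive measurable function over a set of positive measure is strictly positive, so $K_{a,c,d}(x,y)>0$ for every $(x,y)\in\Rp^2$. Dividing by the positive constant $K_{a,c,d}(0,0)$ gives $u_{a,c,d}>0$ on $\Rp^2$.

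The only real point of care is the validity of the Laplace representation, including the interchange of integrals, at every point of $\Rp^2$ and in the critical case $\nu=0$. Corollary \ref{cor:positive-laplace} already supplies this: its domination bound uses $\sqrt{\nu/(ac)}<1$, which holds since $d>0$, and when $\nu=0$ the $I_0$ factor is simply $1$. I therefore expect no genuine obstacle. As a consistency check for $d=ac$, the representation factorizes and yields the explicit exponential recorded in the remark after Theorem \ref{thm:main}.
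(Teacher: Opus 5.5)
Your proposal is correct and follows the paper's own proof: Proposition~\ref{prop:minimizer-kernel} reduces the sign of $u_{a,c,d}$ to the sign of $K_{a,c,d}$, and Corollary~\ref{cor:positive-laplace} gives $K_{a,c,d}>0$ on $\Rp^2$. Your additional check that $K_{a,c,d}$ is continuous, so that the identification holds pointwise including on the axes, is a harmless refinement that the paper leaves implicit.
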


\begin{proof}
By Proposition \ref{prop:minimizer-kernel}, we have $u_{a,c,d}=K_{a,c,d}/K_{a,c,d}(0,0)$ with positive denominator, it follows from 
Corollary \ref{cor:positive-laplace}, i.e. $K_{a,c,d}>0$ that $u_{a,c,d}>0$.
\end{proof}

A smooth function $M$ on $(0,\infty)^2$ is called jointly completely monotone if
\[
(-1)^{p+q}\partial_s^p\partial_t^q M(s,t)\ge0
\qquad(p,q\ge0,\ s,t>0).
\]

\begin{proposition}[Complete monotonicity diagnosis]\label{prop:complete-monotonicity}
For $a,c,d>0$, the following are equivalent:
\begin{enumerate}[label=\textup{(\alph*)}]
\item $d\le ac$;
\item $M_{a,c,d}$ is jointly completely monotone on $\Rp^2$;
\item $M_{a,c,d}$ has a positive two-parameter Laplace representation.
\end{enumerate}
\end{proposition}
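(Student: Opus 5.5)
We prove the cycle (a)$\Rightarrow$(c)$\Rightarrow$(b)$\Rightarrow$(a).

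\textbf{(a)$\Rightarrow$(c).} This is the first formula of Proposition \ref{prop:bessel-rep}. When $d\le ac$ and $\nu=ac-d\ge0$, it writes
\[
M_{a,c,d}(s,t)=\int_0^\infty\int_0^\infty e^{-su-tv}\,\rho(u,v)\dd u\dd v,
\qquad \rho(u,v)=e^{-cu-av}I_0(2\sqrt{\nu uv}).
\]
The density $\rho$ is nonnegative, and in fact strictly positive, since $I_0\ge1$. So $M_{a,c,d}$ is the Laplace transform of a positive measure.

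\textbf{(c)$\Rightarrow$(b).} This direction is the easy half of the two-variable Bernstein theorem. Differentiate under the integral sign for $s,t>0$. This is justified because $u^pv^qe^{-su-tv}\rho$ is integrable: fixing $s_0<s$ and $t_0<t$, it is dominated by $C\,e^{-s_0u-t_0v}\rho$, and that function is integrable since $M(s_0,t_0)<\infty$. The result is
\[
(-1)^{p+q}\partial_s^p\partial_t^qM=\iint u^pv^qe^{-su-tv}\rho\dd u\dd v\ge0.
\]
Since $M_{a,c,d}$ is defined and smooth up to $s,t\ge0$ with positive denominator, the inequalities extend to $\Rp^2$ by continuity.

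\textbf{(b)$\Rightarrow$(a).} Argue by contraposition: assume $d>ac$, put $\mu=d-ac$, and exhibit a single mixed derivative of the wrong sign. Write $\alpha=s+c$, $\beta=t+a$, so that $M=(\alpha\beta+\mu)^{-1}$ and $\partial_s=\partial_\alpha$, $\partial_t=\partial_\beta$. The first mixed derivative is
\[
\partial_s\partial_tM=\partial_\alpha\!\left(-\alpha(\alpha\beta+\mu)^{-2}\right)=\frac{-(\alpha\beta+\mu)+2\alpha\beta}{(\alpha\beta+\mu)^3}=\frac{\alpha\beta-\mu}{(\alpha\beta+\mu)^3}.
\]
Complete monotonicity with $p=q=1$ requires $\alpha\beta\ge\mu$ for all $s,t\ge0$. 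At $s=t=0$ this reads $ac\ge d-ac$, i.e. $d\le 2ac$. That is a real obstruction, but it is not sharp.

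The sharpening is the main obstacle; I expect it to come from higher-order mixed derivatives. Expand in the variable $w=\alpha\beta$:
\[
M=\sum_k(-\mu)^k(\alpha\beta)^{-k-1}.
\]
Each term satisfies
\[
(-1)^{p+q}\partial_\alpha^p\partial_\beta^q\,\alpha^{-k-1}\beta^{-k-1}
=\frac{(k+p)!\,(k+q)!}{(k!)^2}\,\alpha^{-k-1-p}\beta^{-k-1-q}>0,
\]
so the derivative of order $(p,p)$ is an alternating series in $\mu/(\alpha\beta)$. For large $p$ I would evaluate it at $s=t=0$, where $x=\mu/(ac)>0$, and show it becomes negative for some $p$ exactly when $x>0$. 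Up to the positive factor $(ac)^{-p-1}$, the quantity is
\[
F_p(x)=\sum_k(-x)^k\frac{((k+p)!)^2}{(k!)^2}.
\]
This series is only Abel-summable, so I would instead use the closed form. Since $M=(\alpha\beta+\mu)^{-1}$, one has
\[
(-1)^{2p}\partial_\alpha^p\partial_\beta^pM=p!\,\partial_\alpha^p\bigl[\alpha^p(\alpha\beta+\mu)^{-p-1}\bigr].
\]
Leibniz then gives $p!\sum_j\binom pj\frac{p!}{j!}\alpha^{j}\,\partial_\alpha^{j}(\alpha\beta+\mu)^{-p-1}$, which is a polynomial in $r=\alpha\beta/(\alpha\beta+\mu)$ times a positive factor. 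Explicitly it is a Laguerre/Jacobi-type polynomial
\[
\sum_j\binom pj\frac{(p+j)!}{j!}\,(-r)^j\cdot\text{const},
\]
whose zeros lie in $(0,1)$ and accumulate at $0$ as $p\to\infty$. If $\mu>0$, then $r=ac/(ac+\mu)<1$ at the corner, and I would pick $p$ so that a sign change of this polynomial lands near $r$, forcing a negative value.

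Two fallbacks are available if locating the zeros is messy. The first is a weighted Bernstein argument: (b) implies via the two-variable Bernstein--Widder theorem a positive representing measure, and uniqueness of Laplace transforms identifies it with the $J_0$ density of Proposition \ref{prop:bessel-rep}. That density $e^{-cu-av}J_0(2\sqrt{\mu uv})$ is negative where $2\sqrt{\mu uv}\in(j_{0,1},j_{1,1})$, a contradiction.

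The second fallback avoids Bernstein entirely: positivity of the density combined with Corollary \ref{cor:positive-laplace}'s computation would make $K_{a,c,d}>0$. I prefer the first fallback as the cleanest route.
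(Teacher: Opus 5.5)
\textbf{Verdict.} Your proposal is correct through the route you say you prefer, the first fallback, but that route differs from the paper's. Your (a)$\Rightarrow$(c) and (c)$\Rightarrow$(b) are the paper's arguments.

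\textbf{Your route for (b)$\Rightarrow$(a).} The multivariable Bernstein--Widder (Bochner) theorem gives a positive measure $\sigma$ with $M_{a,c,d}=\int e^{-su-tv}\,d\sigma$. Three details make the rest airtight:
\begin{enumerate}
\item $\sigma$ is finite, with $\sigma([0,\infty)^2)=M_{a,c,d}(0,0)=1/d$ by monotone convergence.
\item The $J_0$ representation of Proposition~\ref{prop:bessel-rep} converges absolutely because $|J_0|\le1$.
\item Uniqueness of Laplace transforms of finite signed measures then forces $\sigma=e^{-cu-av}J_0(2\sqrt{\mu uv})\,du\,dv$. This density is negative where $2\sqrt{\mu uv}\in(j_{0,1},j_{0,2})$, and that interval contains yours.
\end{enumerate}

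\textbf{The paper's route.} The paper continues your own $(1,1)$ computation, but raises the order in $s$ only. Only two Leibniz terms survive, and an induction gives
\[
(-1)^{m+1}\partial_s^m\partial_t M_{a,c,d}(s,t)=\frac{m!\,(t+a)^{m-1}\bigl((s+c)(t+a)-m\mu\bigr)}{\bigl((s+c)(t+a)+\mu\bigr)^{m+2}}.
\]
At the corner this has the sign of $ac-m\mu$, which is negative once $m>ac/\mu$.

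\textbf{What each buys.} The paper's argument is elementary and quantitative: it names a specific failing derivative. Yours is conceptual: it shows that (b) and (c) coincide via a general theorem, and that the obstruction is exactly the negative lobe of $J_0$. The cost is a deep representation theorem and a non-constructive conclusion.

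\textbf{Your primary $(p,p)$ route is not a proof as written.}
\begin{itemize}
\item The displayed identity drops a factor $(-1)^p$. At $\mu=0$ your version is negative for odd $p$. Correctly,
\[
\partial_s^p\partial_t^pM_{a,c,d}=(p!)^2(\alpha\beta+\mu)^{-p-1}P_p(2r-1),\qquad r=\frac{\alpha\beta}{\alpha\beta+\mu},
\]
where $P_p$ is the Legendre polynomial.
\item Having a sign change of the polynomial ``land near $r$'' does not make its value at $r$ negative.
\item The route can be completed. At the corner, $2r-1=\cos\theta$ with $\theta\in(0,\pi)$. Either the Laplace--Heine asymptotics, or Pringsheim's theorem applied to $\sum_{p\ge0}P_p(\cos\theta)z^p=(1-2z\cos\theta+z^2)^{-1/2}$ (radius of convergence $1$, yet analytic at $z=1$), shows $P_p(\cos\theta)<0$ for infinitely many $p$.
\end{itemize}

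\textbf{Your second fallback.} It does not avoid Bernstein. From (b) alone you have no positive density to feed into the Gaussian computation. It also leans on the supercritical sign change of Theorem~\ref{thm:main}, which is far heavier than this diagnosis requires.
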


\begin{proof}
The implication (a)$\Rightarrow$(c) is Proposition \ref{prop:bessel-rep}.  The implication (c)$\Rightarrow$(b) follows by differentiating under the positive Laplace integral.

It remains to prove (b)$\Rightarrow$(a).  Suppose $d>ac$ and put $\mu=d-ac>0$.  Then
\[
M_{a,c,d}(s,t)=\frac1{(s+c)(t+a)+\mu}.
\]
A direct induction gives, for every integer $m\ge1$,
\[
(-1)^{m+1}\partial_s^m\partial_t M_{a,c,d}(s,t)
=\frac{m!(t+a)^{m-1}\bigl((s+c)(t+a)-m\mu\bigr)}{((s+c)(t+a)+\mu)^{m+2}}.
\]
At $(0,0)$ the numerator contains $ac-m\mu$.  Choosing $m>ac/\mu$ makes it negative, contradicting complete monotonicity.
\end{proof}

\begin{remark}
The complete monotonicity result is not needed to force sign change.  It explains the same threshold from a transform viewpoint: below the threshold the reciprocal symbol is generated by a positive measure, while above the threshold this positive-measure structure fails at a finite mixed derivative.
\end{remark}

\begin{proposition}[Axis positivity]\label{prop:axis}
For every $a,c,d>0$,
\[
K_{a,c,d}(x,0)>0\quad (x\ge0),
\qquad
K_{a,c,d}(0,y)>0\quad (y\ge0).
\]
More precisely,
\[
K_{a,c,d}(x,0)=\frac1{\pi^{3/2}\sqrt a}\int_0^\infty\int_0^\infty
\frac{e^{-cu-(d/a)v-x^2/(4(u+v))}}{\sqrt{uv(u+v)}}\dd u\dd v,
\]
and
\[
K_{a,c,d}(0,y)=\frac1{\pi^{3/2}\sqrt c}\int_0^\infty\int_0^\infty
\frac{e^{-au-(d/c)v-y^2/(4(u+v))}}{\sqrt{uv(u+v)}}\dd u\dd v.
\]
\end{proposition}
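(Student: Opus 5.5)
Set $y=0$ in the kernel and integrate out $\eta$ exactly. Then write the remaining one-dimensional symbol as a product of two completely monotone factors, each with a positive Laplace representation, and take the Gaussian cosine transform in $\xi$. By Proposition \ref{prop:onedim} with $y=0$,
\[
K_{a,c,d}(x,0)=\frac{2}{\pi}\int_0^\infty\frac{\cos(x\xi)}{\sqrt{\xi^2+c}\sqrt{a\xi^2+d}}\dd\xi
=\frac{2}{\pi\sqrt a}\int_0^\infty\frac{\cos(x\xi)}{\sqrt{\xi^2+c}\sqrt{\xi^2+d/a}}\dd\xi .
\]
The subordination identity $\lambda^{-1/2}=\pi^{-1/2}\int_0^\infty u^{-1/2}e^{-\lambda u}\dd u$ for $\lambda>0$ gives
\[
\frac{1}{\sqrt{\xi^2+c}\sqrt{\xi^2+d/a}}=\frac1\pi\int_0^\infty\int_0^\infty\frac{e^{-(\xi^2+c)u-(\xi^2+d/a)v}}{\sqrt{uv}}\dd u\dd v .
\]

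Next insert this into the $\xi$-integral and interchange the order of integration. Using the Gaussian cosine integral from Corollary \ref{cor:positive-laplace} with $u+v$ in place of $u$,
\[
\int_0^\infty e^{-(u+v)\xi^2}\cos(x\xi)\dd\xi=\frac{\sqrt\pi}{2\sqrt{u+v}}e^{-x^2/(4(u+v))}.
\]
Collecting constants gives
\[
\frac{2}{\pi\sqrt a}\cdot\frac1\pi\cdot\frac{\sqrt\pi}{2}=\frac{1}{\pi^{3/2}\sqrt a},
\]
which is exactly the stated formula. The integrand is strictly positive and finite, so $K_{a,c,d}(x,0)>0$.

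The formula for $K_{a,c,d}(0,y)$ follows by symmetry. Interchanging the roles of $(\xi,x,c)$ and $(\eta,y,a)$ leaves $W_{a,c,d}$ in the same form with $a$ and $c$ swapped. The analogue of Proposition \ref{prop:onedim} integrating in $\xi$ first then gives the same argument with $c$ replacing $a$.

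The only delicate point is justifying Fubini, since $\cos(x\xi)$ is not positive. I would bound the absolute integrand by
\[
\frac{e^{-(\xi^2+c)u-(\xi^2+d/a)v}}{\sqrt{uv}} .
\]
Its integral over $\xi\in(0,\infty)$ and $u,v>0$ equals
\[
\frac{\pi}{\sqrt{\cdot}}\int_0^\infty\frac{\dd\xi}{\sqrt{\xi^2+c}\sqrt{\xi^2+d/a}} ,
\]
which is finite because the integrand is $O(\xi^{-2})$ at infinity and bounded near $0$. Tonelli then gives absolute integrability, and the interchange is legitimate. I expect no genuine obstacle beyond this bookkeeping.
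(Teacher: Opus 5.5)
Your proposal is correct and follows essentially the same route as the paper: set $y=0$ in Proposition~\ref{prop:onedim}, subordinate each inverse square root via the Gamma integral (after factoring out $\sqrt a$), evaluate the Gaussian cosine integral with parameter $u+v$, and get the $(0,y)$ formula by the $a\leftrightarrow c$ symmetry; your Tonelli bound supplies the interchange of integrals that the paper leaves implicit. The only slip is cosmetic: the dominating integral equals $\pi\int_0^\infty(\xi^2+c)^{-1/2}(\xi^2+d/a)^{-1/2}\,d\xi$, with no extra $\sqrt{\cdot}$ in the denominator.
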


\begin{proof}
Set $y=0$ in Proposition \ref{prop:onedim}.  Use
\[
\frac1{\sqrt{\xi^2+c}}=\frac1{\sqrt\pi}\int_0^\infty u^{-1/2}e^{-u(\xi^2+c)}\dd u,
\]
and the analogous identity for $(a\xi^2+d)^{-1/2}$.  The remaining $\xi$-integral is the Gaussian cosine integral with parameter $u+v$.  Every factor is positive.
\end{proof}

\section{The supercritical proof}

By the scaling in Proposition \ref{prop:scaling}, it is enough to analyze
\[
K_m=K_{1,1,m},\qquad m>1.
\]
The proof has three parts: an exact branch-cut formula, a boundary-layer limit, and a sign test for the limiting profile.

\subsection{Analytic continuation and the branch-cut formula}
From Proposition \ref{prop:onedim}, we have 
\[
K_m(x,y)=\frac2\pi\int_0^\infty
\frac{\cos(x\xi)e^{-y\lambda_m(\xi)}}{(\xi^2+1)\lambda_m(\xi)}\dd\xi,
\qquad
\lambda_m(\xi)=\sqrt{\frac{\xi^2+m}{\xi^2+1}}.
\]

\begin{lemma}[The slit domain]\label{lem:slit}
Let
\[
D=\{z\in\mathbb C:\operatorname{Im}z>0\}\setminus [i,i\sqrt m].
\]
Then
\[
q_m(z)=\frac{z^2+m}{z^2+1}
\]
does not meet $(-\infty,0]$ on $D$.  Hence the principal square root $\lambda_m(z)=\sqrt{q_m(z)}$ is analytic on $D$ and has nonnegative real part there.
\end{lemma}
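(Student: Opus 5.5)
The claim is that $q_m(z)=(z^2+m)/(z^2+1)$ takes no value in $(-\infty,0]$ on $D$. I will prove this by solving $q_m(z)=-t$ with $t\ge0$ explicitly for $z$ and checking that every solution in the closed upper half-plane lies on the slit $[i,i\sqrt m]$. Analyticity of $\lambda_m$ and $\operatorname{Re}\lambda_m\ge0$ then follow at once from the properties of the principal branch.

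First, $q_m$ is meromorphic on $\mathbb C$ with poles only at $z=\pm i$, and $i$ lies on the slit. So $q_m$ is holomorphic on $D$.

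Suppose $z\in D$ and $q_m(z)=-t$ for some $t\ge0$. Clearing denominators gives $z^2+m=-t(z^2+1)$, that is,
\[
z^2=-\frac{m+t}{1+t}.
\]
The right-hand side is real and negative, so $z$ is purely imaginary. Since $\operatorname{Im}z>0$, we get $z=i s$ with
\[
s=\sqrt{\frac{m+t}{1+t}}.
\]
As $t$ runs over $[0,\infty)$, the quantity $(m+t)/(1+t)$ decreases monotonically from $m$ toward $1$, because $m>1$. Hence $s\in(1,\sqrt m]$ and $z\in[i,i\sqrt m]$, which contradicts $z\in D$. Therefore $q_m(D)\subset\mathbb C\setminus(-\infty,0]$.

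The principal square root is holomorphic on $\mathbb C\setminus(-\infty,0]$ and maps it into the open right half-plane. Composing, $\lambda_m=\sqrt{q_m}$ is analytic on $D$ with $\operatorname{Re}\lambda_m>0$ there, which in particular gives the stated nonnegative real part.

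The argument has no real obstacle. The only point that needs care is the endpoint bookkeeping: $t=0$ gives the endpoint $i\sqrt m$, and $t\to\infty$ only approaches the pole $i$. Both are covered by removing the closed segment $[i,i\sqrt m]$.
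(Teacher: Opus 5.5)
Your proof is correct, and it reaches the conclusion by a slightly different route from the paper. The paper computes
$\operatorname{Im}q_m(s+it)=2(1-m)st/|z^2+1|^2$ to show that $q_m$ is real on the upper half-plane only along the imaginary axis. It then reads off from $q_m(it)=(m-t^2)/(1-t^2)$ that the negative values occur exactly for $1<t<\sqrt m$.

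You instead invert the equation $q_m(z)=-t$ directly. This has three small advantages:
\begin{itemize}
\item it skips the imaginary-part computation;
\item it handles the zero at $i\sqrt m$ and the negative values together (the paper leaves the zero implicit, covered only because the closed segment is removed);
\item it yields the sharper conclusion $\operatorname{Re}\lambda_m>0$.
\end{itemize}

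What the paper's route buys is the sign of $\operatorname{Im}q_m$ on each side of the slit: $\operatorname{Im}q_m<0$ for $\operatorname{Re}z>0$ and $\operatorname{Im}q_m>0$ for $\operatorname{Re}z<0$. Proposition~\ref{prop:branch} uses exactly this to identify the boundary values $\lambda_{m,\pm}(it)=\mp i\beta_m(t)$. With your argument that one-line sign information would have to be supplied separately at that later point.
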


\begin{proof}
Write $z=s+it$ with $t>0$.  A direct computation gives
\[
\operatorname{Im}q_m(z)=\frac{2(1-m)st}{|z^2+1|^2}.
\]
Thus $q_m(z)$ can be real only when $s=0$.  On the imaginary axis, $z=it$ gives
\[
q_m(it)=\frac{m-t^2}{1-t^2},
\]
which is negative exactly for $1<t<\sqrt m$.  That interval is precisely the removed slit.  The statement follows from the principal branch of the square root.
\end{proof}

\begin{figure}[t]
\centering
\includegraphics[width=0.78\textwidth]{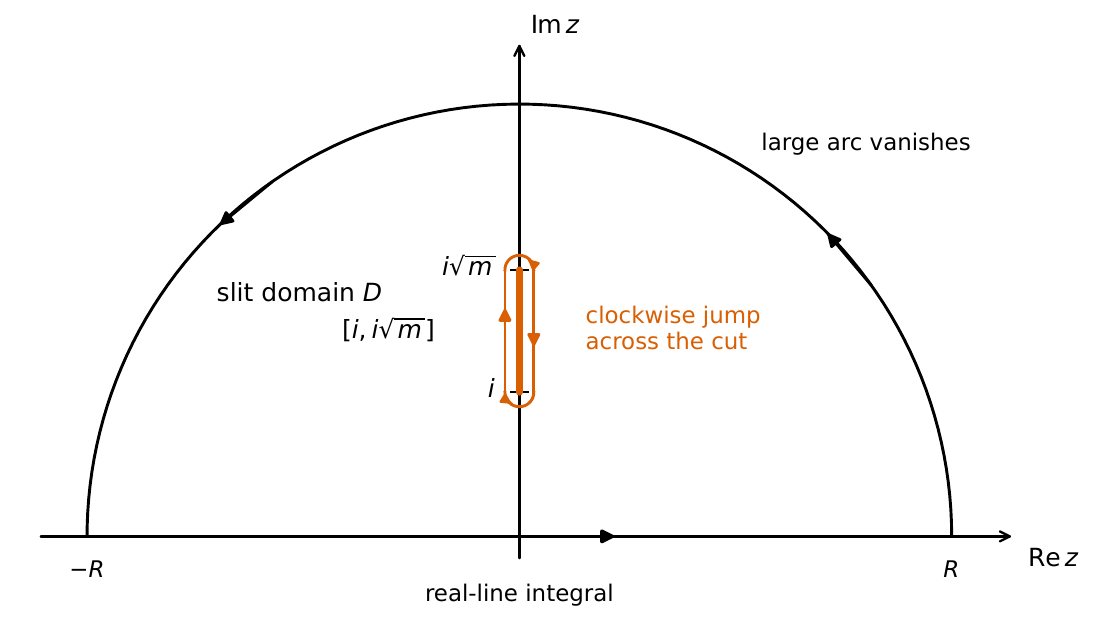}
\caption{The contour in the slit upper half-plane $D=\{\operatorname{Im}z>0\}\setminus [i,i\sqrt m]$.  The real-line integral is deformed to the boundary of the slit.  The large arc and the small endpoint detours vanish; the remaining contribution is the clockwise jump across $[i,i\sqrt m]$.}
\label{fig:branch-cut}
\end{figure}

\begin{proposition}[Exact branch-cut formula]\label{prop:branch}
Let $m>1$.  For every $x>0$ and $y\ge0$,
\[
K_m(x,y)=\frac2\pi\int_1^{\sqrt m}
\frac{e^{-xt}\cos\left(y\sqrt{\frac{m-t^2}{t^2-1}}\right)}{\sqrt{t^2-1}\sqrt{m-t^2}}\dd t.
\]
\end{proposition}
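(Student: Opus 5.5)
Starting from the one-dimensional formula of Proposition~\ref{prop:onedim} in the form just displayed, I write $\cos(x\xi)=\operatorname{Re}e^{ix\xi}$. Since the integrand $F(\xi)=e^{-y\lambda_m(\xi)}/((\xi^2+1)\lambda_m(\xi))$ is even in $\xi$, this gives
\[
K_m(x,y)=\frac1\pi\int_{-\infty}^{\infty}e^{ix\xi}F(\xi)\dd\xi .
\]
By Lemma~\ref{lem:slit}, $G(z)=e^{ixz}e^{-y\lambda_m(z)}/((z^2+1)\lambda_m(z))$ is analytic on the slit domain $D$. The denominator $(z^2+1)\lambda_m(z)=\sqrt{(z^2+1)(z^2+m)}$ vanishes only at the slit endpoints $i$ and $i\sqrt m$. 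I then close the contour in the upper half-plane: the real segment $[-R,R]$, the large semicircle, and a keyhole around $[i,i\sqrt m]$ consisting of small circles of radius $\eps$ at $i$ and $i\sqrt m$ together with the two sides of the slit. Cauchy's theorem then gives the real-line integral as the integral around the slit, traversed appropriately.

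Next I verify that the auxiliary pieces vanish. On the large arc, $\lambda_m(z)\to1$, so $|e^{-y\lambda_m}|$ stays bounded, and $|G|\prec e^{-x\operatorname{Im}z}/|z|^2$. The arc contribution is therefore $O(1/R)$; Jordan's lemma is not even needed, because $x>0$ and the $|z|^{-2}$ decay suffice. Near the endpoints the factor $1/\sqrt{(z^2+1)(z^2+m)}$ behaves like $|z-i|^{-1/2}$, respectively $|z-i\sqrt m|^{-1/2}$. The factor $e^{-y\lambda_m}$ is bounded, since $\operatorname{Re}\lambda_m\ge0$ on $D$ and this persists in the boundary limits. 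The small circles thus contribute $O(\eps^{1/2})\to0$. This endpoint control is the delicate point. Near $i$, $\lambda_m$ itself blows up like $|z-i|^{-1/2}$, so $e^{-y\lambda_m}$ is oscillatory or decaying but never growing, thanks to $\operatorname{Re}\lambda_m\ge0$. I expect this to be the only subtle step, and the same observation gives the domination needed to let $\eps\to0$ along the slit.

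Then I compute the jump. Parametrize $z=it\pm0$ with $1<t<\sqrt m$, where $\pm0$ denotes the right and left edges. There $q_m(it)=-(m-t^2)/(t^2-1)<0$, and the principal square root gives boundary values $\lambda_m=\pm i\,\rho(t)$ with $\rho(t)=\sqrt{(m-t^2)/(t^2-1)}$. This is because $\operatorname{Im}q_m$ has sign $-s$ (recall $m>1$): on the right edge ($s>0$) $q_m$ approaches the negative axis from below, giving $-i\rho$, and on the left edge from above, giving $+i\rho$. Also $z^2+1=1-t^2<0$ and $e^{ixz}=e^{-xt}$. So on the two edges
\[
G=\frac{e^{-xt}e^{\pm iy\rho}}{(1-t^2)(\mp i\rho)}=\pm\frac{i\,e^{-xt}e^{\pm iy\rho}}{\sqrt{t^2-1}\sqrt{m-t^2}} .
\]
Traversing up one side and down the other with $dz=i\dd t$ and summing, the exponentials combine into $e^{iy\rho}+e^{-iy\rho}=2\cos(y\rho)$. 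Up to an orientation sign, fixed by positivity at $y=0$, the slit integral is $2\int_1^{\sqrt m}e^{-xt}\cos(y\rho)/(\sqrt{t^2-1}\sqrt{m-t^2})\dd t$. Multiplying by $1/\pi$ gives exactly the $2/\pi$ prefactor in the statement. As a sanity check, at $y=0$ the result is a positive integral, consistent with Proposition~\ref{prop:axis}.
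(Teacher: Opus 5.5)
Your proposal is correct and follows essentially the paper's argument: the same slit domain, the same $O(|z|^{-2}e^{-x\operatorname{Im}z})$ arc bound and $O(\eps^{1/2})$ endpoint detours (using $|e^{-y\lambda_m}|\le1$ from $\operatorname{Re}\lambda_m\ge0$), and the same boundary values $\mp i\rho$ on the right and left edges of the slit. There is one arithmetic slip: since $1/i=-i$, the right-edge value is $-i\,e^{-xt}e^{iy\rho}/(\sqrt{t^2-1}\sqrt{m-t^2})$ and the left-edge value is $+i\,e^{-xt}e^{-iy\rho}/(\sqrt{t^2-1}\sqrt{m-t^2})$, so your displayed formula is off by a global sign; your appeal to $K_m(x,0)>0$ (Proposition~\ref{prop:axis}) does repair this, but the paper's direct orientation bookkeeping (the slit is traversed clockwise, so the real-line integral equals $\int_1^{\sqrt m}(F_+-F_-)\,i\,dt$) gives the correct sign without that detour.
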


\begin{proof}
Since the integrand is even in $\xi$,
\[
K_m(x,y)=\frac1\pi\int_{-\infty}^{\infty}F(\xi)\,d\xi,
\qquad
F(z)=e^{ixz}\frac{e^{-y\lambda_m(z)}}{(z^2+1)\lambda_m(z)},
\]
where $\lambda_m$ is the principal square root on the slit domain of Lemma \ref{lem:slit}.  The branch is fixed by $\lambda_m(r)>0$ for $r>0$.

Let $\Gamma_{R,\rho}$ be the positively oriented boundary of the upper half-disc $|z|<R$, with the slit $[i,i\sqrt m]$ removed and with small semicircular detours of radius $\rho$ around the endpoints $i$ and $i\sqrt m$.  On the large arc, $\lambda_m(z)=1+O(|z|^{-2})$ and
\[
F(z)=O(|z|^{-2}e^{-x\operatorname{Im}z}),
\]
so the large-arc integral tends to zero as $R\to\infty$.  Near either endpoint of the slit, one factor has a square-root zero or pole and the remaining factors are bounded; hence $F(z)=O(|z-z_0|^{-1/2})$.  The endpoint detours therefore have integrals $O(\rho^{1/2})\to0$.

It remains to compute the contribution from the two sides of the slit.  Write $z=it$, $1<t<\sqrt m$, and
\[
\beta_m(t)=\sqrt{\frac{m-t^2}{t^2-1}}.
\]
Approaching the upward-oriented slit from the right side sends $q_m(it)$ to the negative real axis from below; approaching from the left side sends it from above.  Hence the principal square-root boundary values are
\[
\lambda_{m,+}(it)=-i\beta_m(t),
\qquad
\lambda_{m,-}(it)=i\beta_m(t),
\]
where $+$ and $-$ denote the right and left sides of the upward-oriented slit.  Since $(it)^2+1=-(t^2-1)$,
\[
F_+(it)-F_-(it)
=-2i\,e^{-xt}\cos(y\beta_m(t))\frac1{\sqrt{t^2-1}\sqrt{m-t^2}}.
\]
The boundary of the slit in the punctured upper half-plane is traversed clockwise.  Equivalently, after the real-line part is kept with its usual orientation, the slit contribution is
\[
\int_1^{\sqrt m}(F_+(it)-F_-(it))i\,dt.
\]
Cauchy's theorem, followed by $R\to\infty$ and $\rho\to0$, gives the displayed formula after division by $\pi$. The assumption $x>0$ is used in the large-arc estimate; axis positivity is handled separately in Proposition~\ref{prop:axis}.
\end{proof}

\subsection{Boundary layer and universal profile}
The formula in Proposition \ref{prop:branch} shows that large $x$ is governed by the endpoint $t=1$.  The natural scale is $t-1\asymp x^{-1}$, which corresponds to $y\asymp x^{-1/2}$.

Define
\[
H(\alpha)=\int_0^\infty e^{-u}u^{-1/2}\cos\left(\frac{\alpha}{\sqrt u}\right)\dd u,
\qquad \alpha\ge0.
\]

\begin{proposition}[Boundary-layer limit]\label{prop:boundary}
Let $m>1$ and $L>0$.  Then
\[
\sup_{0\le\rho\le L}\left|
e^x\sqrt x\,K_m\left(x,\frac{\rho}{\sqrt x}\right)
-\frac{2}{\pi\sqrt{2(m-1)}}H\left(\rho\sqrt{\frac{m-1}{2}}\right)
\right|\longrightarrow0
\]
as $x\to\infty$.
\end{proposition}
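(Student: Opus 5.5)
Starting from the exact branch-cut formula of Proposition~\ref{prop:branch}, I substitute $t=1+s/x$. Then $e^{-xt}=e^{-x}e^{-s}$ and $dt=ds/x$, and the integration range becomes $s\in(0,(\sqrt m-1)x)$. For the remaining factors,
\[
\sqrt{t^2-1}=\sqrt{s/x}\,\sqrt{2+s/x},\qquad \sqrt{m-t^2}\to\sqrt{m-1}.
\]
The oscillatory argument is
\[
y\beta_m(t)=\frac{\rho}{\sqrt x}\sqrt{\frac{m-t^2}{t^2-1}}
=\rho\,\frac{\sqrt{m-t^2}}{\sqrt{s}\sqrt{2+s/x}}\ \longrightarrow\ \rho\sqrt{\frac{m-1}{2}}\,s^{-1/2}.
\]
The point is that the factor $x^{-1/2}$ in $y$ exactly cancels the $x^{1/2}$ coming from $\sqrt{t^2-1}$, which is why $y\asymp x^{-1/2}$ is the right scale. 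Collecting powers of $x$ gives
\[
e^x\sqrt x\,K_m\!\left(x,\frac{\rho}{\sqrt x}\right)=\frac2\pi\int_0^{(\sqrt m-1)x}
\frac{e^{-s}s^{-1/2}\cos\bigl(\rho\,g_x(s)\bigr)}{\sqrt{2+s/x}\,\sqrt{m-(1+s/x)^2}}\dd s,
\]
where $g_x(s)=\sqrt{m-(1+s/x)^2}\,/\bigl(\sqrt s\sqrt{2+s/x}\bigr)$. The pointwise limit of the integrand is
\[
\frac{1}{\sqrt{2(m-1)}}\,e^{-s}s^{-1/2}\cos\!\Bigl(\rho\sqrt{\tfrac{m-1}{2}}\,s^{-1/2}\Bigr),
\]
so the limit integral is exactly $\frac{2}{\pi\sqrt{2(m-1)}}H\bigl(\rho\sqrt{(m-1)/2}\bigr)$.

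To make the convergence rigorous and uniform in $\rho\in[0,L]$, I split the $s$-range at $s=\delta x$ for a small fixed $\delta>0$, chosen so that $1+\delta<\sqrt m$ with room to spare.

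\emph{Tail $s>\delta x$.} Here I return to the $t$-variable: $t\ge1+\delta$. I bound $|\cos|\le1$ and $e^{-xt}\le e^{-x(1+\delta)}$. The remaining factor $1/(\sqrt{t^2-1}\sqrt{m-t^2})$ is integrable on $[1+\delta,\sqrt m]$, because the endpoint singularity at $\sqrt m$ is only of square-root type. The tail contribution is therefore $O(\sqrt x\,e^{-\delta x})$ uniformly in $\rho$.

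\emph{Main part $s\le\delta x$.} Here $s/x\le\delta$, so the denominator is bounded below by a positive constant. The integrand is then dominated by $Ce^{-s}s^{-1/2}$, which is integrable and independent of $\rho$ and $x$.

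For uniformity in $\rho$, I use $|\cos A-\cos B|\le|A-B|$ together with
\[
|\rho g_x(s)-\rho g_\infty(s)|\le L\,s^{-1/2}\,O(s/x)=O(L\,s^{1/2}/x)
\]
on $s\le\delta x$. The amplitude factors likewise converge with error $O(s/x)$. The total error is therefore bounded by
\[
\int_0^\infty e^{-s}s^{-1/2}\,O\!\left(\frac{(1+L)(1+s)}{x}\right)\dd s=O\!\left(\frac{1}{x}\right),
\]
uniformly for $\rho\le L$. The truncation of the limit integral at $s=\delta x$ costs only $O(e^{-\delta x})$. Together these give the stated uniform convergence.

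The main obstacle is keeping the oscillatory phase error uniform near $s=0$. There $g_x$ and $g_\infty$ both blow up like $s^{-1/2}$, but their difference carries the factor $s/x$, which tames it; I will check this with a Taylor bound of $g_x/g_\infty-1$ valid for $s/x\le\delta$. The tail near $t=\sqrt m$ needs care only in verifying that the inverse square-root singularity is integrable, which it is.
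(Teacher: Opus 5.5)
Your proof is correct and follows the paper's route: the same substitution $t=1+s/x$ in the branch-cut formula, the same split at $t_*=1+\delta$ with the tail bounded by $C_m\sqrt x\,e^{-\delta x}$, and the same integrable majorant $Ce^{-s}s^{-1/2}$ on the main part. The only difference is how you get uniformity in $\rho$. The paper uses uniform convergence on compact rectangles $[0,L]\times[\delta,R]$ followed by an $\eps$-truncation, while your Lipschitz bound $|\rho g_x(s)-\rho g_\infty(s)|\le C L s^{1/2}/x$ gives the same conclusion with an explicit $O(1/x)$ rate.
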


\begin{proof}
Fix $1<t_*<\sqrt m$ and split the branch-cut integral into the intervals $[1,t_*]$ and $[t_*,\sqrt m]$.
On $[t_*,\sqrt m]$ the function
\[
\frac{1}{\sqrt{t^2-1}\sqrt{m-t^2}}
\]
is integrable.  Hence, after multiplication by $e^x\sqrt x$, this part is bounded by
\[
C_m\sqrt x\,e^{-x(t_*-1)},
\]
uniformly for $0\le\rho\le L$.  It tends to zero.

On $[1,t_*]$ put $t=1+u/x$.  After extending the transformed integrand by zero for $u>x(t_*-1)$, this part becomes
\[
\int_0^\infty F_x(\rho,u)\,du,
\]
where
\[
F_x(\rho,u)=\frac2\pi e^{-u}
\frac{\cos\left(\frac{\rho}{\sqrt x}\sqrt{\frac{m-(1+u/x)^2}{(1+u/x)^2-1}}\right)}
{\sqrt{x}\sqrt{(1+u/x)^2-1}\sqrt{m-(1+u/x)^2}}
\]
for $0<u<x(t_*-1)$, and $F_x=0$ otherwise.  For fixed $u>0$,
\[
\sqrt{x}\sqrt{(1+u/x)^2-1}\sqrt{m-(1+u/x)^2}\longrightarrow \sqrt{2u(m-1)},
\]
and
\[
\frac1{\sqrt x}\sqrt{\frac{m-(1+u/x)^2}{(1+u/x)^2-1}}\longrightarrow \sqrt{\frac{m-1}{2u}}.
\]
Thus
\[
F_x(\rho,u)\longrightarrow
\frac2{\pi\sqrt{2(m-1)}}e^{-u}u^{-1/2}
\cos\left(\rho\sqrt{\frac{m-1}{2u}}\right).
\]
The convergence is uniform for $(\rho,u)$ in every compact rectangle $[0,L]\times[\delta,R]$ with $0<\delta<R<\infty$.

We also need a uniform majorant.  On $1\le t\le t_*$ we have
\[\sqrt{m-t^2}\ge c_m>0,\qquad \sqrt{t^2-1}\ge \sqrt{2(t-1)}.\]
After the change $t=1+u/x$, this gives
\[
|F_x(\rho,u)|\le C_m e^{-u}u^{-1/2},
\qquad 0\le\rho\le L,
\]
and the right-hand side is integrable on $(0,\infty)$.  Given $\eps>0$, choose $0<\delta<R<\infty$ so that the majorant has total mass less than $\eps$ on $(0,\delta)\cup(R,\infty)$.  On $[\delta,R]$ use the uniform convergence just proved.  This proves the desired uniform convergence in $\rho$.
\end{proof}

\begin{lemma}[A one-crossing rearrangement estimate]\label{lem:one-crossing}
Let $q$ and $h$ be increasing real functions on an interval $[A,B]$, with $h\ge0$. If $\int_A^B q(t)\,dt\ge0$ and $q$ has at most one zero, then
\[
\int_A^B q(t)h(t)\,dt\ge0.
\]
\end{lemma}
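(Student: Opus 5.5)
I will prove Lemma~\ref{lem:one-crossing} by a Chebyshev-type sign-change argument. The idea is to centre $h$ at its value at the crossing point of $q$.

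\textbf{Locating the crossing.} Since $q$ is increasing, the set $\{q<0\}$ is an initial subinterval of $[A,B]$ and $\{q>0\}$ is a terminal subinterval. Let
\[
t_0=\sup\{t\in[A,B]:q(t)<0\}\cup\{A\},
\]
so that $q\le0$ on $[A,t_0)$ and $q\ge0$ on $(t_0,B]$. I will not need the hypothesis ``at most one zero'' beyond this. Monotonicity alone forces $q$ to change sign at most once, from negative to positive.

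\textbf{The key pointwise inequality.} Because $h$ is increasing,
\[
\bigl(h(t)-h(t_0)\bigr)\,q(t)\ge0\qquad\text{for every } t\in[A,B]\setminus\{t_0\}.
\]
Indeed, for $t<t_0$ both factors are $\le0$, and for $t>t_0$ both factors are $\ge0$. Integrating this inequality gives
\[
\int_A^B q(t)h(t)\,dt\;\ge\; h(t_0)\int_A^B q(t)\,dt .
\]

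\textbf{Conclusion.} The hypotheses give $h(t_0)\ge0$ and $\int_A^B q\ge0$, so the right-hand side is nonnegative, and the lemma follows. Integrability of $q$, $h$ and $qh$ is automatic on a compact interval for monotone real-valued functions, since they are bounded and Riemann integrable. If the lemma is later applied on an unbounded or half-open interval, I would assume the integrals converge and note that the same pointwise argument still goes through.

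\textbf{Edge cases.} Two degenerate situations need a word. If $q\ge0$ everywhere, take $t_0=A$. If $q<0$ on all of $[A,B)$, then $t_0=B$ and $\int_A^B q<0$, which contradicts the hypothesis unless $q=0$ almost everywhere. In both cases the displayed inequality still holds.

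The only step requiring care is the choice of $t_0$ so that the pointwise inequality holds on both sides; everything else is routine. This is a mild point rather than a genuine obstacle.
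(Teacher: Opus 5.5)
Your proof is correct and is essentially the paper's argument: both write $\int_A^B qh = h(t_0)\int_A^B q + \int_A^B q\,(h-h(t_0))$, with $t_0$ the sign-change point of $q$, and observe that the second integrand is pointwise nonnegative because $h$ is increasing. Your choice $t_0=\sup(\{q<0\}\cup\{A\})$ is a slight improvement. It makes the ``at most one zero'' hypothesis unnecessary, and it also covers an increasing $q$ that jumps over $0$ without vanishing, a case the paper dismisses as ``immediate'' even though that is only literally true for continuous $q$.
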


\begin{proof}
If $q$ has no zero, the claim is immediate. Otherwise let $\tau$ be its zero. Then
\[
\int_A^B qh=h(\tau)\int_A^B q+
\int_A^B q(t)\bigl(h(t)-h(\tau)\bigr)\,dt.
\]
The first term is nonnegative by hypothesis. In the second integral the two factors have the same sign on both sides of $\tau$, so it is also nonnegative.
\end{proof}

\begin{lemma}[The profile changes sign, with certified estimates]\label{lem:H-negative}
The function $H$ is continuous on $[0,\infty)$, real analytic on $(0,\infty)$, satisfies $H(0)=\sqrt\pi$, and
\[
H(\pi)<0.
\]
In fact, $H(\pi)<-3/200$.
\end{lemma}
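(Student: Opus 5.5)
Continuity, analyticity and the value at $0$ are routine; the real content is the certified bound $H(\pi)<-3/200$. I first remove the singular weight by setting $u=s^2$, which gives
\[
H(\alpha)=2\int_0^\infty e^{-s^2}\cos(\alpha/s)\dd s .
\]
This integrand is bounded by $2e^{-s^2}$ uniformly in $\alpha$. Dominated convergence then gives continuity on $[0,\infty)$, and $H(0)=2\int_0^\infty e^{-s^2}\dd s=\sqrt\pi$ (equivalently $\Gamma(1/2)$ in the original form).

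For real analyticity on $(0,\infty)$ I would extend $\alpha$ to a complex strip $|\operatorname{Im}\alpha|<\delta$. The obstacle is that $\cos(\alpha/s)$ grows like $e^{\delta/s}$ as $s\to0$. To handle this, go back to the original variable: there $|\cos(\alpha u^{-1/2})|\le e^{\delta u^{-1/2}}$. Then deform the $u$-contour slightly into the complex plane near $u=0$, or use a Mellin--Barnes representation, which expresses $H$ through two ${}_0F_2$-type hypergeometric series and hence is entire in $\alpha^2$ plus $\alpha$ times entire. I will try the Mellin--Barnes route first, since it gives analyticity in closed form. Analyticity is not used later, however, so if that becomes delicate only continuity is essential.

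For the sign at $\alpha=\pi$ I change variables so that the oscillation becomes a plain cosine. Set $s'=\alpha/\sqrt u$, so that
\[
H(\alpha)=2\alpha\int_0^\infty g_\alpha(s)\cos s\dd s,
\qquad g_\alpha(s)=\frac{e^{-\alpha^2/s^2}}{s^2}.
\]
For $\alpha=\pi$, the weight $g=g_\pi$ is smooth, vanishes to infinite order at $0$, increases on $(0,\pi)$, decreases on $(\pi,\infty)$, and is $O(s^{-2})$ at infinity. I then split $(0,\infty)$ into lobes $I_k=[(k-\tfrac12)\pi,(k+\tfrac12)\pi]$, starting with $[0,\pi/2]$, on which $\cos s$ has a fixed sign. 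The peak of $g$ at $s=\pi$ lies in the negative lobe $I_1=[\pi/2,3\pi/2]$. That is the mechanism for $H(\pi)<0$.

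On the decreasing range $s\ge3\pi/2$, the lobe integrals alternate in sign and decrease in absolute value. The tail is therefore controlled by an alternating-series bound: its magnitude is at most that of the first tail lobe, and its sign is that of the first tail lobe, which is positive on $I_2$. On the increasing range $[0,3\pi/2]$, I would pair pieces and apply the one-crossing estimate of Lemma~\ref{lem:one-crossing}, with $h=g$ increasing and $q$ a suitable shifted cosine, to compare lobes without exact evaluation.

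The main obstacle is the remaining finite computation, namely the integrals of $g\cos s$ over $[0,\pi/2]$, $I_1$ and $I_2$. These must be enclosed rigorously to an accuracy better than the margin $3/200$ divided by $2\pi$. I would do this with monotone upper and lower Riemann sums, using the monotonicity of $g$ on each subinterval together with the explicit sign of $\cos$. The step size would be fixed so that the total error is below about $10^{-3}$, and the tail bound would be added to obtain the certified value $H(\pi)<-3/200$.
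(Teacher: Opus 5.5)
Your proposal is correct. For the sign estimate it follows the paper's skeleton:
\begin{itemize}
\item the same substitution, $H(\pi)=\int_0^\infty w(t)\cos t\,dt$ with $w=2\pi g_\pi$;
\item the same unimodality of $w$ at $t=\pi$;
\item the same initial piece $[0,\pi/2]$;
\item the same alternating-tail argument, giving $H(\pi)\le\int_0^{\pi/2}w\cos t\,dt-(A_0-A_1)$, where $A_0,A_1$ are the $|\cos|$-weighted integrals of $w$ over your lobes $I_1,I_2$.
\end{itemize}

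The genuine difference is how this finite part is certified. The paper computes no lobe integrals. It bounds $\int_0^{\pi/2}w=2\int_2^\infty e^{-s^2}\,ds\le\frac12e^{-4}$ via $s=\pi/t$. It then writes $A_0-A_1=\int_{\pi/2}^{3\pi/2}\bigl(w(t)-w(t+\pi)\bigr)|\cos t|\,dt$. On $[\pi/2,\pi]$ it applies the one-crossing lemma with $q(t)=w(t)-w(t+\pi)$ and $h(t)=|\cos t|$, both increasing; this reduces to the Gaussian comparison $\int_1^2e^{-s^2}\,ds>\int_{1/2}^{2/3}e^{-s^2}\,ds$. On $[\pi,3\pi/2]$ it uses $w(3\pi/2)-w(2\pi)>1/40$. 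The result is a short certificate made of a few elementary exponential inequalities.

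Your direct enclosure of $A_0$ and $A_1$ needs no pairing trick and gives a sharper bound. It is also cheaper than you expect, because $-3/200$ is far from tight: $-A_0+A_1\approx-0.17$ and $H(\pi)\approx-0.25$. Use endpoint values of $g$ on the four subintervals of length $\pi/4$ of $I_1$ and on the two halves of $I_2$, integrate $|\cos s|$ exactly on each piece, and add $\int_0^{\pi/2}w\le\frac12e^{-4}$. This already gives $H(\pi)<-0.039$.

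One slip: $g$ is increasing only on $[0,\pi]$, not on $[0,3\pi/2]$. So your variant of the one-crossing lemma with $h=g$ cannot straddle $\pi$. The paper's pairing of $w(t)$ with $w(t+\pi)$ is what makes that lemma applicable, though your quadrature route does not need it.

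For real analyticity, your Mellin--Barnes route is valid. The Mellin transform of $H$ is $\pi\Gamma(z)/\Gamma(\frac{1-z}{2})$, which gives $H(\alpha)=\pi\sum_{n\ge0}(-\alpha)^n/(n!\,\Gamma(\frac{n+1}{2}))$ for $\alpha\ge0$. It is more than you need, though. Also, a fixed-ray deformation of the $u$-contour fails for the full cosine unless you first split it into $e^{\pm i\alpha u^{-1/2}}$.

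The paper works from your own later formula, and the quickest finish is this: $2\alpha\int_0^\infty s^{-2}e^{-\alpha^2/s^2}\cos s\,ds$ converges absolutely and locally uniformly for $\operatorname{Re}\alpha^2>0$. It therefore extends $H$ holomorphically to the sector $|\arg\alpha|<\pi/4$.
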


\begin{proof}
Continuity follows from dominated convergence, since $e^{-u}u^{-1/2}$ is integrable.  The value $H(0)=\sqrt\pi$ is the Gamma integral.  For analyticity on compact subintervals of $(0,\infty)$, use $u=\alpha^2/t^2$ to write
\[
H(\alpha)=2\alpha\int_0^\infty t^{-2}e^{-\alpha^2/t^2}\cos t\,dt.
\]
All $\alpha$-derivatives are dominated by finite sums of $t^{-N}e^{-c/t^2}$ near $0$ and powers of $t^{-1}$ near infinity.

It remains to prove a negative value with certified estimates.  With $u=\pi^2/t^2$,
\[
H(\pi)=\int_0^\infty w(t)\cos t\,dt,
\qquad
w(t)=\frac{2\pi}{t^2}e^{-\pi^2/t^2}.
\]
The function $w$ is increasing on $(0,\pi)$ and decreasing on $(\pi,\infty)$.  The initial positive part satisfies
\[
\int_0^{\pi/2}w(t)\,dt=2\int_2^\infty e^{-s^2}\,ds\le \frac{e^{-4}}2<\frac1{100},
\]
because $\int_a^\infty e^{-s^2}\,ds\le e^{-a^2}/(2a)$ for $a>0$ and $e^4>50$.

For the alternating tail, set
\[
A_j=\int_{(2j+1)\pi/2}^{(2j+3)\pi/2}w(t)|\cos t|\,dt,
\qquad j\ge0.
\]
Since $w$ is decreasing on $[\pi,\infty)$, the sequence $A_1,A_2,\ldots$ is decreasing.  Hence
\[
\int_{\pi/2}^\infty w(t)\cos t\,dt
=-A_0+A_1-A_2+\cdots\le -(A_0-A_1).
\]
We prove $A_0-A_1>1/40$.  Write
\[
A_0-A_1=\int_{\pi/2}^{3\pi/2}\bigl(w(t)-w(t+\pi)\bigr)|\cos t|\,dt.
\]
On $[\pi/2,\pi]$, let $q(t)=w(t)-w(t+\pi)$ and $h(t)=|\cos t|$.  Both $q$ and $h$ are increasing.  Also
\[
\int_{\pi/2}^{\pi}q(t)\,dt
=2\int_1^2 e^{-s^2}\,ds-2\int_{1/2}^{2/3}e^{-s^2}\,ds>0.
\]
The last inequality is certified as follows.  Since $e^{-s^2}$ is convex on $[1,2]$, the composite midpoint rule with six equal subintervals gives a lower bound:
\[
\int_1^2 e^{-s^2}\,ds\ge
\frac16\sum_{j=0}^{5}\exp\!\left[-\left(\frac{13+2j}{12}\right)^2\right]
\ge \frac16\sum_{j=0}^{5}\left(1-\frac{(13+2j)^2}{80\cdot12^2}\right)^{80}>
\frac{25}{192}.
\]
Here the second inequality uses $(1-X/N)^N\le e^{-X}$ for $0<X<N$, and the final strict rational inequality is obtained by clearing denominators.  On the other hand,
\[
\int_{1/2}^{2/3}e^{-s^2}\,ds\le \frac16 e^{-1/4}<\frac{25}{192},
\]
because $e^{1/4}>1+1/4+1/32>32/25$.  Therefore $\int_{\pi/2}^{\pi}q>0$. Since $q$ and $h$ are increasing, Lemma~\ref{lem:one-crossing} gives
\[
\int_{\pi/2}^{\pi}q(t)h(t)\,dt\ge0.
\]

On $[\pi,3\pi/2]$,
\[
\int_{\pi}^{3\pi/2}\bigl(w(t)-w(t+\pi)\bigr)|\cos t|\,dt
\ge w(3\pi/2)-w(2\pi).
\]
Finally,
\[
w(3\pi/2)-w(2\pi)=\frac8{9\pi}e^{-4/9}-\frac1{2\pi}e^{-1/4}>\frac1{40},
\]
using $e^{-4/9}\ge 5/9$, $e^{-1/4}<25/32$, and $\pi<4$.  Thus the tail is $<-1/40$, while the initial positive part is $<1/100$, and hence
\[
H(\pi)<\frac1{100}-\frac1{40}=-\frac3{200}.
\]
\end{proof}

\begin{remark}
The exact constant is not important.  What matters is that the universal profile is positive at $0$ and negative at an explicit point.  This makes the supercritical sign change uniform in the parameter $m>1$ after scaling.
\end{remark}

\subsection{Proof of the sharp threshold and boundary-layer sign loss}

\begin{proof}[Proof of Theorem \ref{thm:main}]
Proposition \ref{prop:minimizer-kernel} gives the equivalence between the sign of the minimizer and the sign of the kernel.  Proposition \ref{prop:subcrit-pos} proves positivity when $d\le ac$.

Assume $d>ac$ and put $m=d/(ac)>1$.  Choose
\[
L_m=\pi\sqrt{\frac2{m-1}}.
\]
Then $L_m\sqrt{(m-1)/2}=\pi$.  Proposition \ref{prop:boundary} and Lemma \ref{lem:H-negative} give
\[
\lim_{x\to\infty} e^x\sqrt x\,K_m\left(x,\frac{L_m}{\sqrt x}\right)
=\frac2{\pi\sqrt{2(m-1)}}H(\pi)<0.
\]
Thus $K_m(x,L_m/\sqrt x)<0$ for all large $x$.  The scaling formula transfers this negative value back to $K_{a,c,d}$.  Hence supercritical kernels, and therefore supercritical minimizers, change sign.
\end{proof}

\begin{corollary}[Boundary-layer sign loss]\label{cor:first-scale}
Let $m>1$ and set $A_m=\sqrt{(m-1)/2}$.  If $J\Subset\{\alpha>0:H(\alpha)<0\}$, then there is $X_J>0$ such that
\[
\{(x,y):x>X_J,\ A_m\sqrt x\,y\in J\}\subset\{K_m<0\}.
\]
If $J\Subset\{\alpha\ge0:H(\alpha)>0\}$, the same domain is positive for all large $x$.  In particular,
\[
K_m\left(x,\frac{\pi}{A_m\sqrt x}\right)<0
\]
for all sufficiently large $x$.  The same conclusion holds with any point in a small interval around $\pi$.
\end{corollary}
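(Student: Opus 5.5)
The plan is to derive the corollary directly from the uniform boundary-layer limit in Proposition~\ref{prop:boundary}, with the continuity of $H$ from Lemma~\ref{lem:H-negative} supplying the margins. Write $c_m=\frac{2}{\pi\sqrt{2(m-1)}}>0$ and $A_m=\sqrt{(m-1)/2}$. The condition $A_m\sqrt x\,y\in J$ says $y=\rho/\sqrt x$ with $\rho=\alpha/A_m$ and $\alpha\in J$. Since $J$ is relatively compact, $\overline J$ is compact, so $\rho$ ranges in the bounded set $[0,L]$ with $L=\sup\overline J/A_m$. Proposition~\ref{prop:boundary}, applied with this $L$, then gives
\[
\sup_{\alpha\in \overline J}\Bigl|e^x\sqrt x\,K_m\bigl(x,\tfrac{\alpha}{A_m\sqrt x}\bigr)-c_mH(\alpha)\Bigr|\longrightarrow0 .
\]

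Next comes a strict margin for $H$ on $\overline J$. In the first case $\overline J$ is a compact subset of the open set $\{\alpha>0:H(\alpha)<0\}$ (openness uses continuity of $H$). By compactness and continuity, $\max_{\overline J}H=-\eta<0$. Choose $X_J$ so that the supremum above is less than $c_m\eta/2$ for all $x>X_J$. Then for every such $x$ and every $\alpha\in J$,
\[
e^x\sqrt x\,K_m\bigl(x,\alpha/(A_m\sqrt x)\bigr)<-c_m\eta/2<0 .
\]
Since $e^x\sqrt x>0$, this gives $K_m<0$ on the stated domain. The positive case is identical. Here $\{\alpha\ge0:H(\alpha)>0\}$ is relatively open in $[0,\infty)$, so the compact closure $\overline J$ inside it has $\min_{\overline J}H>0$, and the same choice of $x$ gives positivity.

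For the last assertions I use Lemma~\ref{lem:H-negative}: $H(\pi)<0$ and $H$ is continuous, so there is $\delta>0$ with $H<0$ on $[\pi-\delta,\pi+\delta]\subset(0,\infty)$. Taking $J=\{\pi\}$, or $J=[\pi-\delta/2,\pi+\delta/2]$, in the first part yields $K_m\bigl(x,\pi/(A_m\sqrt x)\bigr)<0$ for all large $x$, and likewise for any point in a small interval around $\pi$.

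I expect no real obstacle. The only care needed is to interpret $J\Subset U$ so that $\overline J$ is a compact subset of $U$, which gives the uniform sign margin, and to note that the uniformity in Proposition~\ref{prop:boundary} is over a bounded $\rho$-range, which compactness of $\overline J$ supplies.
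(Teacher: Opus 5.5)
Your proposal is correct and follows essentially the same route as the paper: the uniform convergence from Proposition~\ref{prop:boundary} on the bounded $\rho$-range $[0,L]$, combined with a strict sign margin for $H$ on the compact set $\overline J$, and then continuity of $H$ near $\pi$ via Lemma~\ref{lem:H-negative}. Your write-up just makes the margin argument ($\max_{\overline J}H=-\eta<0$, with $X_J$ chosen so the uniform error is below $c_m\eta/2$) explicit where the paper states it in one line.
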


\begin{proof}
This is just the uniform convergence in Proposition \ref{prop:boundary}.  On a compact subset of $\{H<0\}$, the limiting profile has a strictly negative maximum; on a compact subset of $\{H>0\}$, it has a strictly positive minimum.  Lemma \ref{lem:H-negative} supplies at least one negative interval, because $H(\pi)<0$ and $H$ is continuous.
\end{proof}

\begin{corollary}[A concrete example]\label{cor:concrete}
For $(a,c,d)=(1,1,5)$, the unique minimizer changes sign.  More precisely,
\[
K_{1,1,5}\left(X,\frac{\pi}{\sqrt{2X}}\right)<0
\]
for all sufficiently large $X$.
\end{corollary}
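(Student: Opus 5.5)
We reduce to the normalized kernel via scaling and apply the boundary-layer sign loss already established. For $(a,c,d)=(1,1,5)$ we have $ac=1<5=d$, so the triple is supercritical with $m=d/(ac)=5$. Proposition~\ref{prop:scaling} with $a=c=1$ gives $K_{1,1,5}=K_5$ identically, with no rescaling of variables. We therefore only need the sign of $K_5$ along the curve $y=\pi/\sqrt{2X}$.

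Compute $A_5=\sqrt{(5-1)/2}=\sqrt2$. Then
\[
\frac{\pi}{A_5\sqrt X}=\frac{\pi}{\sqrt{2X}}.
\]
This is exactly the point appearing in Corollary~\ref{cor:first-scale}, which asserts $K_m\bigl(x,\pi/(A_m\sqrt x)\bigr)<0$ for all sufficiently large $x$. Applying it with $m=5$ yields $K_{1,1,5}\bigl(X,\pi/\sqrt{2X}\bigr)<0$ for all large $X$.

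The ingredients behind this are explicit. Proposition~\ref{prop:boundary} with $\rho=\pi/\sqrt2$ shows that $e^X\sqrt X\,K_5(X,\rho/\sqrt X)$ converges to $\frac{2}{\pi\sqrt8}H(\pi)$. Lemma~\ref{lem:H-negative} gives $H(\pi)<-3/200<0$, so the limit is strictly negative.

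For the sign change of the minimizer, Proposition~\ref{prop:minimizer-kernel} gives $u_{1,1,5}=K_{1,1,5}/K_{1,1,5}(0,0)$ with a positive denominator. Hence $u_{1,1,5}\bigl(X,\pi/\sqrt{2X}\bigr)<0$ for large $X$, while $u_{1,1,5}(0,0)=1>0$. So the unique minimizer takes both signs.

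There is no real obstacle here. The only points to check are the arithmetic $A_5=\sqrt2$ and that the scaling in Proposition~\ref{prop:scaling} is trivial when $a=c=1$; all the analysis is contained in the cited results.
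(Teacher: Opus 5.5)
Your proposal is correct and is essentially the paper's proof: the paper takes $m=5$, notes $L_5=\pi\sqrt{2/(m-1)}=\pi/\sqrt2$, and specializes the final step of the proof of Theorem~\ref{thm:main}. You route this through Corollary~\ref{cor:first-scale} with $A_5=\sqrt2$ instead, which is the same computation.
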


\begin{proof}
Here $m=5$, so $L_m=\pi/\sqrt2$.  The conclusion is the special case of the last part of the proof of Theorem \ref{thm:main}.
\end{proof}

\section{Two-dimensional non-diagonal stability}

This section proves Corollary \ref{cor:intro-b}.  The point is simple but important: once a diagonal minimizer is negative at one point, a small bounded perturbation of the quadratic form cannot remove that negative value.

Let
\[
A_b(u,v)=\int_{\Rp^2}\bigl(u_{xy}v_{xy}+a u_xv_x+b(u_xv_y+u_yv_x)+c u_yv_y+d uv\bigr)\dd x\dd y.
\]
The corresponding energy is $J_{a,b,c,d}(u)=A_b(u,u)$.

\begin{lemma}[Coercivity for small $b$]\label{lem:b-coercive}
For fixed $a,c,d>0$ there is $b_1>0$ such that $A_b$ is continuous, symmetric, and coercive on $E_2$ whenever $|b|<b_1$.
\end{lemma}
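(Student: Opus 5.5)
The plan is to compare $A_b$ with the diagonal form $A_0=B_{a,c,d}$ and treat the cross term as a small perturbation. Symmetry of $A_b$ is immediate from the symmetric way $b$ enters. For continuity, each term is bounded by Cauchy--Schwarz by a product of $L^2$ norms of $D_Iu$ and $D_Iv$. Since $\norm{u}_{E_2}^2=\norm{u}^2_{L^2}+\norm{u_x}^2_{L^2}+\norm{u_y}^2_{L^2}+\norm{u_{xy}}^2_{L^2}$, this gives
\[
|A_b(u,v)|\le \max\{1,a,c,d,|b|\}\,\cdot 2\,\norm{u}_{E_2}\norm{v}_{E_2}.
\]
The constant $2$ is crude; any fixed constant will do.

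For coercivity we first note that the diagonal form satisfies
\[
A_0(u,u)\ge \kappa\norm{u}_{E_2}^2,\qquad \kappa=\min\{1,a,c,d\}>0.
\]
This follows termwise, or equivalently from Proposition \ref{prop:spectral}, since $W_{a,c,d}\ge\kappa(1+\xi^2)(1+\eta^2)$. The cross term is then bounded using $2|u_xu_y|\le u_x^2+u_y^2$:
\[
\Bigl|2b\int_{\Rp^2}u_xu_y\dd x\dd y\Bigr|\le |b|\bigl(\norm{u_x}^2_{L^2}+\norm{u_y}^2_{L^2}\bigr)\le |b|\,\norm{u}_{E_2}^2.
\]
Combining the two estimates gives
\[
A_b(u,u)\ge(\kappa-|b|)\norm{u}_{E_2}^2.
\]
Choosing $b_1=\kappa/2$ yields coercivity with constant $\kappa/2$, uniformly for $|b|<b_1$.

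A sharper choice is available. One can take $b_1=\sqrt{ac}$ by bounding the $x,y$-gradient part, using $au_x^2+2bu_xu_y+cu_y^2\ge(1-|b|/\sqrt{ac})(au_x^2+cu_y^2)$, which is positive definite exactly when $b^2<ac$. This is consistent with the standing hypothesis $ac-b^2>0$ from Section 1, but it is not needed here, since the statement only asks for some $b_1>0$.

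There is no real obstacle in this argument. The one point to check is that the bounds apply to all of $E_2$ and not only to a smooth core. They do, because every term involves only $u,u_x,u_y,u_{xy}\in L^2$, and the weak derivatives are exactly those appearing in the $E_2$ norm. With coercivity in hand, Lax--Milgram together with Lemma \ref{lem:mixed-embedding} will later give the Riesz representative of the corner evaluation for each $A_b$, and hence a unique minimizer.
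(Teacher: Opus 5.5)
Your argument is correct and is essentially the paper's proof: it uses the same diagonal coercivity constant $\min\{1,a,c,d\}$, the same bound $\bigl|2b\int u_xu_y\bigr|\le |b|\norm{u}_{E_2}^2$, and the same choice of $b_1$ as half that constant. Your explicit continuity estimate and the sharper admissible range $|b|<\sqrt{ac}$ are both valid additions; the paper does not spell out either one.
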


\begin{proof}
For $b=0$,
\[
A_0(u,u)\ge \gamma_0\norm{u}_{E_2}^2,
\qquad
\gamma_0=\min\{1,a,c,d\}>0.
\]
The mixed part satisfies
\[
\left|2\int u_xu_y\right|\le \norm{u_x}_2^2+\norm{u_y}_2^2\le \norm{u}_{E_2}^2.
\]
Thus $A_b(u,u)\ge(\gamma_0-|b|)\norm{u}_{E_2}^2$.  Take $b_1=\gamma_0/2$.
\end{proof}

Let $\ell(u)=u(0,0)$.  Since point evaluation is continuous on $E_2$, Lax--Milgram gives a unique $k_b\in E_2$ satisfying
\[
A_b(k_b,v)=\ell(v)\qquad (v\in E_2).
\]
As before, the constrained minimizer is
\[
u_b=\frac{k_b}{\ell(k_b)}.
\]

\begin{lemma}[Continuity in $b$]\label{lem:b-continuity}
As $b\to0$, $k_b\to k_0$ in $E_2$.  Hence $u_b(p)\to u_0(p)$ for every fixed point $p\in\Rp^2$.
\end{lemma}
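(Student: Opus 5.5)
The plan is a standard perturbation argument for Lax--Milgram solutions. Write $A_b=A_0+bP$, where
\[
P(u,v)=\int_{\Rp^2}(u_xv_y+u_yv_x)\dd x\dd y .
\]
The estimate in the proof of Lemma~\ref{lem:b-coercive} gives $|P(u,v)|\le\norm{u}_{E_2}\norm{v}_{E_2}$, so $P$ is a bounded bilinear form on $E_2$. Also for $|b|<b_1=\gamma_0/2$ we have the uniform coercivity $A_b(u,u)\ge(\gamma_0/2)\norm{u}_{E_2}^2$.

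First, a uniform bound. Testing $A_b(k_b,v)=\ell(v)$ with $v=k_b$ and using \eqref{eq:point-evaluation-bound} gives
\[
\tfrac{\gamma_0}{2}\norm{k_b}_{E_2}^2\le A_b(k_b,k_b)=k_b(0,0)\le\norm{k_b}_{E_2},
\]
hence $\norm{k_b}_{E_2}\le 2/\gamma_0$ for all $|b|<b_1$.

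Second, subtract the equations for $k_b$ and $k_0$. For every $v\in E_2$,
\[
A_0(k_b-k_0,v)=A_0(k_b,v)-\ell(v)=A_0(k_b,v)-A_b(k_b,v)=-b\,P(k_b,v).
\]
Take $v=k_b-k_0$, use coercivity of $A_0$ with constant $\gamma_0$ and boundedness of $P$:
\[
\gamma_0\norm{k_b-k_0}_{E_2}^2\le|b|\,\norm{k_b}_{E_2}\norm{k_b-k_0}_{E_2}.
\]
Therefore $\norm{k_b-k_0}_{E_2}\le 2|b|/\gamma_0^2\to0$. This proves the first claim, with an explicit Lipschitz rate.

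Third, pass to the minimizers. By \eqref{eq:point-evaluation-bound}, $|k_b(p)-k_0(p)|\le\norm{k_b-k_0}_{E_2}$ uniformly in $p\in\Rp^2$. In particular $\ell(k_b)\to\ell(k_0)$. Moreover $\ell(k_0)=A_0(k_0,k_0)>0$, since $k_0\ne0$ (because $\ell\ne0$), and this equals $K_{a,c,d}(0,0)$ up to normalization by Proposition~\ref{prop:minimizer-kernel}. So the denominators $\ell(k_b)$ stay bounded away from zero for small $b$, and $u_b=k_b/\ell(k_b)\to u_0$ uniformly on $\Rp^2$, in particular pointwise.

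There is no real obstacle here. The only points needing care are the nonvanishing of $\ell(k_0)$, which follows from $\ell(k_0)=A_0(k_0,k_0)>0$, and checking that the bound on the cross term from Lemma~\ref{lem:b-coercive} gives boundedness of $P$ as a bilinear form, not just of its diagonal. The latter follows directly from Cauchy--Schwarz, since
\[
|P(u,v)|\le\norm{u_x}_2\norm{v_y}_2+\norm{u_y}_2\norm{v_x}_2\le\norm{u}_{E_2}\norm{v}_{E_2}.
\]
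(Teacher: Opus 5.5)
Your proof is correct and is essentially the paper's argument. The paper writes the perturbation as the resolvent identity $k_b-k_0=-bT_b^{-1}Sk_0$ and uses the uniform bound on $T_b^{-1}$. Your energy estimate is the same identity in the form $k_b-k_0=-bT_0^{-1}Sk_b$, combined with your uniform bound $\norm{k_b}_{E_2}\le 2/\gamma_0$. Your version also makes the Lipschitz rate explicit and gives uniform, not just pointwise, convergence of $u_b$ on $\Rp^2$.
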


\begin{proof}
Let $T_b:E_2\to E_2^*$ be the operator associated with $A_b$.  Then $T_b=T_0+bS$, where $S$ is bounded.  For $|b|<b_1$, the inverses $T_b^{-1}$ are uniformly bounded by coercivity.  The resolvent identity gives
\[
k_b-k_0=T_b^{-1}\ell-T_0^{-1}\ell=-bT_b^{-1}S k_0,
\]
which tends to $0$ in $E_2$.  Point evaluation is continuous, and $\ell(k_b)\to\ell(k_0)>0$, so the normalized minimizers converge pointwise.
\end{proof}

\begin{proof}[Proof of Corollary \ref{cor:intro-b}]
Since $d>ac$, Theorem \ref{thm:main} gives a point $p$ with $u_0(p)<0$.  By Lemma \ref{lem:b-continuity}, $u_b(p)<0$ for all sufficiently small $|b|$.  The constraint gives $u_b(0,0)=1>0$.  Hence $u_b$ changes sign.
\end{proof}

\begin{proposition}[A calibrating non-diagonal critical exponential]\label{prop:b-critical}
Assume $a,c>0$ and $|b|<\sqrt{ac}$.  Set
\[
\beta=\frac{b}{\sqrt{ac}},
\qquad r=\sqrt{1+\beta},
\qquad d_c=(\sqrt{ac}+b)^2.
\]
At $d=d_c$, the constrained minimizer of $J_{a,b,c,d}$ is
\[
u_c(x,y)=\exp\{-r(\sqrt c x+\sqrt a y)\}.
\]
\end{proposition}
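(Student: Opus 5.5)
The plan is to verify the weak Euler--Lagrange identity \eqref{eq:weak-nondiagonal} for $u_c$ directly, and then conclude minimality from the quadratic structure. First I will check that the problem is well posed for $|b|<\sqrt{ac}$. The pointwise inequality $a\xi_1^2+2b\xi_1\xi_2+c\xi_2^2\ge(1-|\beta|)(a\xi_1^2+c\xi_2^2)$ makes $J_{a,b,c,d_c}$ comparable to $\norm{\cdot}_{E_2}^2$, so $B_{a,b,c,d_c}$ is coercive on $E_2$. Then the decomposition $J(u+h)=J(u)+2B(u,h)+J(h)$ shows the following. If $u\in E_2$ with $u(0,0)=1$ satisfies $B(u,v)=\kappa\, v(0,0)$ for all $v\in E_2$, then for every $h$ with $h(0,0)=0$ we get $J(u+h)=J(u)+J(h)>J(u)$ unless $h=0$. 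Hence $u$ is the unique constrained minimizer, and necessarily $\kappa=J(u)$. Clearly $u_c\in E_2$, and $u_c(0,0)=1$.

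To verify the identity, put $p=r\sqrt c$, $q=r\sqrt a$, $f(x)=e^{-px}$ and $g(y)=e^{-qy}$, so that $u_c=f\otimes g$. Since $B$ is continuous on $E_2$ and point evaluation is continuous (Lemma \ref{lem:mixed-embedding}), it suffices to test against a dense set. I will use finite sums of tensor products $v=\varphi(x)\psi(y)$ with $\varphi,\psi\in C_c^\infty([0,\infty))$; their density follows from the product cosine description in Lemma \ref{lem:mixed-embedding}. Write $F_0=\int_0^\infty f\varphi$, $F_1=\int_0^\infty f\varphi'$, and define $G_0,G_1$ analogously for $g,\psi$. One integration by parts gives $F_1=-\varphi(0)+pF_0$ and $G_1=-\psi(0)+qG_0$. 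Using $u_{xy}=pq\,u$, $u_x=-pu$ and $u_y=-qu$, the form factorizes:
\[
B(u_c,v)=pq\,F_1G_1-ap\,F_1G_0-b\bigl(p\,F_0G_1+q\,F_1G_0\bigr)-cq\,F_0G_1+d_c\,F_0G_0 .
\]

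Next I substitute the expressions for $F_1,G_1$. This produces a linear combination of $\varphi(0)\psi(0)$, $\varphi(0)G_0$, $F_0\psi(0)$ and $F_0G_0$. The coefficient of $\varphi(0)\psi(0)$ is $pq$. The coefficient of $\varphi(0)G_0$ is $-pq^2+ap+bq$, and the coefficient of $F_0\psi(0)$ is its mirror image $-p^2q+bp+cq$. The coefficient of $F_0G_0$ is $p^2q^2-ap^2-2bpq-cq^2+d_c$. The heart of the proof is showing that the last three coefficients vanish. Note the relations $pq=r^2\sqrt{ac}=\sqrt{ac}+b$, $q^2=r^2a$ and $p^2=r^2c$. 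Then $-pq^2+ap+bq=q\bigl(-pq+\sqrt{ac}+b\bigr)=0$, because $ap=aq\sqrt{c/a}=q\sqrt{ac}$; the mirror coefficient vanishes the same way. For the last coefficient, $p^2q^2=(\sqrt{ac}+b)^2=d_c$, and $ap^2+cq^2=2r^2ac=2\sqrt{ac}(\sqrt{ac}+b)$. Hence it equals $2d_c-2\sqrt{ac}(\sqrt{ac}+b)-2b(\sqrt{ac}+b)=2d_c-2(\sqrt{ac}+b)^2=0$.

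This leaves $B(u_c,v)=pq\,v(0,0)$. By linearity and density it holds for all $v\in E_2$, which gives the conclusion with $J(u_c)=pq=\sqrt{ac}+b$. I expect the main obstacle to be this coefficient bookkeeping: getting the signs of the cross term $b$ right, and seeing that the threshold $d_c=(\sqrt{ac}+b)^2$ is exactly what cancels the interior coefficient $F_0G_0$. The density of tensor products in $E_2$ and the boundary integrations by parts on $[0,\infty)$ are routine by comparison. As a sanity check, setting $b=0$ should recover the critical exponential of the Remark after Theorem \ref{thm:main}, with $J=\sqrt{ac}=1/K_{a,c,ac}(0,0)$.
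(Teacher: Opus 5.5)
Your proof is correct. Its overall strategy matches the paper's: verify the weak Euler--Lagrange identity $B(u_c,v)=\kappa\,v(0,0)$ on a dense class, extend it by continuity of $B$ and of point evaluation, and conclude that $u_c$ is the normalized Riesz representative of the corner functional. The difference is in how the identity is checked.

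The paper first rescales $X=\sqrt c\,x$, $Y=\sqrt a\,y$, so the energy becomes $\sqrt{ac}\int(U_{XY}^2+U_X^2+2\beta U_XU_Y+U_Y^2+r^4U^2)$. It then observes that, for $\phi=e^{-r(X+Y)}$, the integrand of the bilinear form equals $r^2\partial_{XY}(\phi V)$ pointwise; this uses only $1+\beta=r^2$. Integrating over the quadrant gives $r^2V(0,0)$ for every smooth compactly supported $V$ in one line. No tensor-product test functions or coefficient bookkeeping are needed, and it makes visible why $d_c$ is exactly the value that turns the integrand into an exact mixed derivative.

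Your computation with $v=\varphi\otimes\psi$ is the same identity unpacked. The vanishing of the coefficients of $\varphi(0)G_0$, $F_0\psi(0)$ and $F_0G_0$ is precisely that exact-derivative structure, tested on products.

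Your version is more explicit where the paper is terse:
\begin{enumerate}
\item You prove coercivity on the whole range $|b|<\sqrt{ac}$ via $a\xi_1^2+2b\xi_1\xi_2+c\xi_2^2\ge(1-|\beta|)(a\xi_1^2+c\xi_2^2)$. The paper's Lemma~\ref{lem:b-coercive} covers only small $|b|$, and the proposition's proof leaves the full range implicit in the phrase ``Riesz representative.''
\item You spell out uniqueness via $J(u+h)=J(u)+J(h)$ for $h(0,0)=0$.
\item You record the minimal energy $J(u_c)=\sqrt{ac}+b$, which agrees with the paper's $\sqrt{ac}\,r^2$.
\end{enumerate}
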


\begin{proof}
Scale $X=\sqrt c x$, $Y=\sqrt a y$, and write $U(X,Y)=u(X/\sqrt c,Y/\sqrt a)$.  Up to the positive factor $\sqrt{ac}$, the energy becomes
\[
\int_{\Rp^2}\bigl(U_{XY}^2+U_X^2+2\beta U_XU_Y+U_Y^2+mU^2\bigr)\dd X\dd Y,
\qquad m=d/(ac).
\]
At $d=d_c$, $m=(1+\beta)^2=r^4$.  Let $\phi=e^{-r(X+Y)}$.  For every test function $V$,
\[
A_{\beta,r^4}(\phi,V)
=r^2\int_{\Rp^2}\partial_{XY}(\phi V)\dd X\dd Y
=r^2V(0,0).
\]
The last equality follows first for the half-space smooth core, for instance functions smooth up to the boundary with compact support in $[0,\infty)^2$, by integrating $\partial_{XY}(\phi V)$ over the rectangle containing the support. The identity then extends to all $V\in E_2$ because both $V\mapsto A_{\beta,r^4}(\phi,V)$ and $V\mapsto V(0,0)$ are continuous on $E_2$, and this smooth core is dense in $E_2$ by truncation, reflection, and mollification.  Thus $\phi/r^2$ is the Riesz representative of the corner functional.  After normalization by the corner value, the minimizer is $\phi$.  Returning to $(x,y)$ gives the formula.
\end{proof}

\begin{remark}
Proposition \ref{prop:b-critical} is a calibration result, not a sharp non-diagonal positivity theorem.  It shows that the explicit positive exponential lies on the curved surface $d=(\sqrt{ac}+b)^2$.  The full sharp threshold for arbitrary $b$ remains open.
\end{remark}

\section{The \texorpdfstring{$n$}{n}-dimensional Ekeland--Nirenberg setting}

This section proves Corollary \ref{cor:intro-nd}. We use the unified subset notation of Section~\ref{subsec:notation}: $D_{[n]}u$ is the highest mixed derivative and $\widetilde D^n_{n-1}u$ is the list of all lower derivatives. The general problem in \cite{EkelandNirenberg2005} uses
\[
J_Q(u)=\int_{\Rp^n}\left((D_{[n]}u)^2+
\bigl(Q\widetilde D^n_{n-1}u,\widetilde D^n_{n-1}u\bigr)\right)\dd\mathbf{x},
\]
with $Q$ symmetric positive definite. We prove a sharp sign theorem for the product-type diagonal subfamily and then prove perturbative stability for finite non-diagonal cross terms.

\subsection{The product-type diagonal subfamily}
Let $n\ge2$, let $\valpha=(\alpha_1,\ldots,\alpha_n)\in(0,\infty)^n$, and set
\[
A_{\valpha}=\prod_{j=1}^n\alpha_j.
\]
The product-type diagonal energy is
\[
J^{(n)}_{\valpha,d}(u)=\int_{\Rp^n}\left(
(D_{[n]}u)^2+
\sum_{\emptyset\ne I\subsetneq[n]}\Bigl(\prod_{j\notin I}\alpha_j\Bigr)|D_Iu|^2+d|u|^2\right)\dd\mathbf{x}.
\]
This is exactly a diagonal choice of $Q$ in the original formulation: the coefficient of $u^2$ is $d$, the coefficient of $|D_Iu|^2$ is $\prod_{j\notin I}\alpha_j$ for $\emptyset\ne I\subsetneq[n]$, and the coefficient of $(D_{[n]}u)^2$ is normalized to one.

Its product cosine symbol is
\[
W_{\valpha,d}(\boldsymbol{\xi})=\prod_{j=1}^n(\xi_j^2+\alpha_j)+d-A_{\valpha}.
\]
Indeed, expanding the product gives all mixed derivative terms, while the constant term $A_{\valpha}$ is replaced by $d$.  The associated kernel is
\[
K^{(n)}_{\valpha,d}(\mathbf{x})=\left(\frac2\pi\right)^n\int_{\Rp^n}
\frac{\prod_{j=1}^n\cos(x_j\xi_j)}{W_{\valpha,d}(\boldsymbol{\xi})}\,d\boldsymbol{\xi}.
\]
The same Cauchy--Schwarz argument as in Proposition \ref{prop:minimizer-kernel} gives
\[
u^{(n)}_{\valpha,d}(\mathbf{x})=\frac{K^{(n)}_{\valpha,d}(\mathbf{x})}{K^{(n)}_{\valpha,d}(\mathbf{0})}.
\]
\begin{lemma}[Integrability of the product-type reciprocal]\label{lem:nd-reciprocal-integrable}
For every $d>0$,
\[
W_{\valpha,d}^{-1}\in L^1(\Rp^n).
\]
More precisely,
\[
W_{\valpha,d}(\boldsymbol{\xi})\ge
\min\{1,d/A_{\valpha}\}\prod_{j=1}^n(\xi_j^2+\alpha_j).
\]
\end{lemma}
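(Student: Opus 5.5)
Write $P(\boldsymbol{\xi})=\prod_{j=1}^n(\xi_j^2+\alpha_j)$, so that $W_{\valpha,d}=P+(d-A_{\valpha})$. Since every factor satisfies $\xi_j^2+\alpha_j\ge\alpha_j$, we have $P\ge A_{\valpha}$ everywhere on $\Rp^n$. The proof of the pointwise lower bound splits into two cases according to the sign of $d-A_{\valpha}$.

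If $d\ge A_{\valpha}$, the added constant is nonnegative, so $W_{\valpha,d}\ge P$. The constant $\min\{1,d/A_{\valpha}\}$ equals $1$ here, so the bound holds.

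If $d<A_{\valpha}$, set $\kappa=d/A_{\valpha}\in(0,1)$ and write
\[
W_{\valpha,d}=\kappa P+\bigl((1-\kappa)P-(A_{\valpha}-d)\bigr).
\]
The bracket is at least $(1-\kappa)A_{\valpha}-(A_{\valpha}-d)=d-\kappa A_{\valpha}=0$, using $P\ge A_{\valpha}$. Hence $W_{\valpha,d}\ge\kappa P$, which is again the stated bound. In both cases $W_{\valpha,d}>0$, because $d>0$.

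Integrability then follows from Tonelli's theorem, since the resulting majorant factorizes:
\[
\int_{\Rp^n}W_{\valpha,d}^{-1}\,d\boldsymbol{\xi}\le\frac{1}{\min\{1,d/A_{\valpha}\}}\prod_{j=1}^n\int_0^\infty\frac{d\xi_j}{\xi_j^2+\alpha_j}=\frac{1}{\min\{1,d/A_{\valpha}\}}\prod_{j=1}^n\frac{\pi}{2\sqrt{\alpha_j}}<\infty .
\]
The argument has no real obstacle. The only point that needs care is the subcritical case, where the constant $d-A_{\valpha}$ is negative and has to be absorbed using the global lower bound $P\ge A_{\valpha}$. That absorption is what produces the factor $d/A_{\valpha}$, and it is also where the hypothesis $d>0$ is used.
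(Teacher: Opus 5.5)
Your proof is correct and follows essentially the same route as the paper. The paper writes $W_{\valpha,d}/P=1+(d-A_{\valpha})/P$ and uses $P\ge A_{\valpha}$ to bound this ratio below by $\min\{1,d/A_{\valpha}\}$ in one line, which is your two-case argument in multiplicative form; integrability then follows from the same factorized bound by one-dimensional integrals.
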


\begin{proof}
Let $P(\boldsymbol{\xi})=\prod_j(\xi_j^2+\alpha_j)$.  Since $P(\boldsymbol{\xi})\ge A_{\valpha}$,
\[
\frac{W_{\valpha,d}(\boldsymbol{\xi})}{P(\boldsymbol{\xi})}
=1+\frac{d-A_{\valpha}}{P(\boldsymbol{\xi})}
\ge \min\{1,d/A_{\valpha}\}.
\]
The reciprocal of $P$ is integrable as a product of one-dimensional integrable functions.

\end{proof}

\begin{lemma}[Half-line Abel approximation]\label{lem:half-line-abel}
Let $k\ge1$ and set
\[
\widetilde P_\eps(\xi)=\frac2\pi\frac{\eps}{\eps^2+\xi^2},\qquad \xi\ge0.
\]
If $f$ is bounded on $\Rp^k$ and continuous at the origin, then
\[
\int_{\Rp^k}f(\boldsymbol{\zeta})\prod_{j=1}^k\widetilde P_\eps(\zeta_j)\,d\boldsymbol{\zeta}
\longrightarrow f(\mathbf 0)
\qquad(\eps\downarrow0).
\]
\end{lemma}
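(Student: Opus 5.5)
This is the standard approximate-identity argument for the half-line Poisson kernel, done in product form. The argument has three steps: unit mass, concentration, and an $\eps$--$\delta$ split.

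\emph{Unit mass.} Each factor has total mass one:
\[
\int_0^\infty \widetilde P_\eps(\xi)\,d\xi=\frac2\pi\arctan(\xi/\eps)\Big|_0^\infty=1 .
\]
Hence the product kernel $\Pi_\eps(\boldsymbol\zeta)=\prod_j\widetilde P_\eps(\zeta_j)$ is a nonnegative probability density on $\Rp^k$. This lets us write
\[
\int_{\Rp^k}f\,\Pi_\eps\,d\boldsymbol\zeta-f(\mathbf 0)=\int_{\Rp^k}\bigl(f(\boldsymbol\zeta)-f(\mathbf 0)\bigr)\Pi_\eps(\boldsymbol\zeta)\,d\boldsymbol\zeta .
\]

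\emph{Concentration.} Set $M=\sup|f|$ and fix $\eta>0$. By continuity of $f$ at the origin, choose $\delta>0$ with $|f(\boldsymbol\zeta)-f(\mathbf 0)|<\eta$ on the cube $[0,\delta]^k$.

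\emph{Splitting.} On the cube, the contribution is at most $\eta$, since $\Pi_\eps$ has mass at most one there. Off the cube, at least one coordinate satisfies $\zeta_j>\delta$. By a union bound over $j$, and since every other factor integrates to one, the outside mass is at most
\[
k\int_\delta^\infty\widetilde P_\eps(\xi)\,d\xi=k\Bigl(1-\frac2\pi\arctan(\delta/\eps)\Bigr)\le \frac{2k\eps}{\pi\delta}.
\]
This uses $\frac\pi2-\arctan s=\arctan(1/s)\le 1/s$. So the outside contribution is at most $2M\cdot 2k\eps/(\pi\delta)$, which tends to $0$ as $\eps\downarrow0$ for fixed $\delta$.

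Therefore $\limsup_{\eps\downarrow0}|\int f\Pi_\eps-f(\mathbf 0)|\le\eta$. Since $\eta$ was arbitrary, the limit is $f(\mathbf 0)$.

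The only delicate point is measurability and integrability of $f\Pi_\eps$. Boundedness of $f$, together with $\Pi_\eps\in L^1$, makes every integral absolutely convergent, so no interchange of limits is needed. Continuity is used only at the origin, which is why the lemma is stated with that weak hypothesis.
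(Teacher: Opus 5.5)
Your proof is correct and follows essentially the same route as the paper: unit mass of the nonnegative half-line Poisson kernel, a small box around the origin where $f$ is within $\eta$ of $f(\mathbf 0)$, and vanishing kernel mass off the box. Your version simply makes the tail estimate explicit through the union bound and the bound $\int_\delta^\infty \widetilde P_\eps \le 2\eps/(\pi\delta)$, which the paper only asserts.
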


\begin{proof}
The kernels are nonnegative and have mass one on the half-line. Given $\delta>0$, choose a small box $[0,r]^k$ on which $|f(\boldsymbol{\zeta})-f(\mathbf0)|<\delta$. The contribution of this box differs from $f(\mathbf0)$ by at most $\delta$ up to the kernel mass, while the complement has kernel mass tending to zero. This proves the claim.
\end{proof}

\begin{theorem}[Sharp product-type threshold]\label{thm:nd-product}
For the product-type diagonal family,
\[
K^{(n)}_{\valpha,d}(\mathbf{x})>0\text{ for all }\mathbf{x}\in\Rp^n
\quad\Longleftrightarrow\quad
 d\le \prod_{j=1}^n\alpha_j.
\]
Therefore the constrained minimizer is positive exactly under the same condition.
\end{theorem}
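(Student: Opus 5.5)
The proof splits along the threshold, as in the two-dimensional case, with one positive representation below it and a reduction to a two-dimensional face above it.

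\emph{Subcritical direction, $d\le A_{\valpha}$.} Put $\nu=A_{\valpha}-d\ge0$ and $s_j=\xi_j^2$. Then
\[
W_{\valpha,d}^{-1}=\frac{1}{P-\nu},\qquad P=\prod_j(s_j+\alpha_j).
\]
I would build a positive $n$-parameter Laplace representation of this function. Expanding in a Neumann series,
\[
\frac1{P-\nu}=\sum_{k\ge0}\nu^kP^{-k-1},
\qquad
(s_j+\alpha_j)^{-k-1}=\int_0^\infty \frac{u_j^k}{k!}e^{-(s_j+\alpha_j)u_j}\dd u_j .
\]
Summing over $k$ gives the kernel $\sum_k \nu^k\prod_j u_j^k/(k!)^n$. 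This is a hypergeometric generalization of $I_0$ and it is positive. Since $\nu<A_{\valpha}\le P$, the series converges, and Tonelli justifies summation.

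Inserting this representation into the kernel and using the Gaussian cosine integral in each variable expresses $K^{(n)}_{\valpha,d}(\mathbf{x})$ as the integral of a strictly positive function. To interchange the integrals I would truncate in $\mathbf u$ and dominate as in Corollary~\ref{cor:positive-laplace}. By AM--GM,
\[
\nu\prod_j u_j\le \frac{\nu}{A_{\valpha}}\Bigl(\frac{1}{n}\sum_j\alpha_ju_j\Bigr)^n .
\]
Together with $\sum_k z^k/(k!)^n\le \exp\bigl(n z^{1/n}\bigr)$, the exponent is beaten by $e^{-\sum\alpha_ju_j}$ because $\nu/A_{\valpha}<1$. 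Positivity of the minimizer then follows from the normalization $u=K/K(\mathbf 0)$, since $K(\mathbf0)>0$.

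\emph{Supercritical direction, $d>A_{\valpha}$.} The plan is to reduce to the two-dimensional Theorem~\ref{thm:main} by restricting to a two-dimensional frequency face. Set $x_3=\dots=x_n=0$. This does not by itself decouple the remaining variables, so instead I use Lemma~\ref{lem:half-line-abel} in the following way.

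I would consider the $(x_1,x_2)$-slice of $K^{(n)}$ at points $(x_1,x_2,x_3,\dots,x_n)$ and average in $x_3,\dots,x_n$ against the cosine transform of $\widetilde P_\eps$, which is the positive function $(2/\pi)^{1/2}e^{-\eps x}$ suitably normalized. This gives
\[
\int_{\Rp^{n-2}}K^{(n)}(x_1,x_2,\mathbf z)\prod_{j\ge3}\eps e^{-\eps z_j}\dd\mathbf z
=\Bigl(\tfrac2\pi\Bigr)^{2}\!\int_{\Rp^2}\cos(x_1\xi_1)\cos(x_2\xi_2)\int_{\Rp^{n-2}}\frac{\prod_{j\ge3}\widetilde P_\eps(\zeta_j)}{W_{\valpha,d}(\xi_1,\xi_2,\boldsymbol\zeta)}\dd\boldsymbol\zeta\,\dd\xi_1\dd\xi_2 .
\]
Applying Lemma~\ref{lem:half-line-abel} in $\boldsymbol\zeta$, the inner integral tends to $W_{\valpha,d}(\xi_1,\xi_2,\mathbf0)^{-1}$. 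Dominated convergence in $(\xi_1,\xi_2)$ is available from Lemma~\ref{lem:nd-reciprocal-integrable}, since $W^{-1}\le C\prod_{j=1,2}(\xi_j^2+\alpha_j)^{-1}$ uniformly in $\boldsymbol\zeta$.

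The limiting symbol is
\[
W_{\valpha,d}(\xi_1,\xi_2,\mathbf0)=B\bigl[(\xi_1^2+\alpha_1)(\xi_2^2+\alpha_2)\bigr]+d-A_{\valpha},
\qquad B=\prod_{j\ge3}\alpha_j .
\]
This is $B\,W_{\alpha_2,\alpha_1,d'}$ in the notation of the paper, with $d'=\alpha_1\alpha_2+(d-A_{\valpha})/B$. Its threshold quantity is $d'-\alpha_1\alpha_2=(d-A_{\valpha})/B>0$, so the face is supercritical. The limit is therefore a positive multiple of $K_{\alpha_2,\alpha_1,d'}(x_1,x_2)$, and Theorem~\ref{thm:main} supplies a point where it is negative.

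Hence for small $\eps$ the weighted average is negative. Because the weights are positive, $K^{(n)}$ must be negative somewhere. The case $n=2$ is Theorem~\ref{thm:main} directly.

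\emph{Main obstacle.} I expect the main difficulty to be justifying the Fubini/Abel interchange. This requires absolute integrability of $W^{-1}\prod_j\cos$ against the exponential weights, and uniform domination in $\boldsymbol\zeta$; both come from the lower bound in Lemma~\ref{lem:nd-reciprocal-integrable}. Continuity of $W^{-1}$ at $\boldsymbol\zeta=\mathbf0$, needed for the lemma, is immediate.
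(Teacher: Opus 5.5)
Your proposal is correct and follows the paper's proof essentially step by step. Below the threshold you use the same positive $\Phi_n$-Laplace representation, $\sum_k \nu^k\prod_j u_j^k/(k!)^n$, followed by the product Gaussian cosine integral. Above it you take the exponential Abel average in $z_3,\dots,z_n$ and apply Lemma~\ref{lem:half-line-abel} with dominated convergence, which reduces the problem to the supercritical face kernel $B^{-1}K_{\alpha_2,\alpha_1,d'}$ (you argue directly rather than by contradiction, which is equivalent).

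One harmless slip: with weights $\eps e^{-\eps z_j}$ rather than the paper's $e^{-\eps z_j}$, the right side of your displayed identity picks up an extra positive factor $\eps^{n-2}$, since $\int_0^\infty \eps e^{-\eps z}\cos(z\xi)\,dz=\eps^2/(\eps^2+\xi^2)$. This does not affect the sign conclusion. Your AM--GM bound $\Phi_n(z)\le e^{nz^{1/n}}$ is valid, though Tonelli on the positive integrand, whose total is $\int W_{\valpha,d}^{-1}<\infty$, already justifies that interchange.
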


\begin{proof}
Write $A=A_{\valpha}$.  First assume $d\le A$ and put $\nu=A-d\ge0$.  Define
\[
\Phi_n(z)=\sum_{k=0}^\infty\frac{z^k}{(k!)^n},
\qquad z\ge0.
\]
For $s_j\ge0$ and $\beta_j=s_j+\alpha_j$, Tonelli's theorem and the Gamma integral give
\[
\int_{\Rp^n}e^{-\sum_j\beta_jt_j}\Phi_n(\nu t_1\cdots t_n)\,dt
=\frac1{\prod_j\beta_j-\nu}.
\]
Indeed, the series expansion gives a geometric series with ratio $\nu/\prod_j\beta_j<1$; the strict inequality follows from $d>0$.  Taking $s_j=\xi_j^2$ yields a positive Laplace representation for $W_{\valpha,d}^{-1}$.  After the product Gaussian cosine integral,
\[
K^{(n)}_{\valpha,d}(\mathbf{x})=\pi^{-n/2}\int_{\Rp^n}
\frac{\exp\left(-\sum_j\alpha_jt_j-\sum_j x_j^2/(4t_j)\right)}{\sqrt{t_1\cdots t_n}}
\Phi_n(\nu t_1\cdots t_n)\dd\mathbf{t}.
\]
Every factor is positive, so $K^{(n)}_{\valpha,d}>0$.

Now assume $d>A$ and put $\mu=d-A>0$.  If $n=2$, this is Theorem \ref{thm:main}.  Let $n\ge3$.  Fix the first two coordinates and set
\[
\bar A_{12}=\prod_{k=3}^n\alpha_k.
\]
On the two-dimensional frequency face $\xi_3=\cdots=\xi_n=0$,
\[
W_{\valpha,d}(\xi_1,\xi_2,0,\ldots,0)=\bar A_{12}\left(\xi_1^2\xi_2^2+
\alpha_2\xi_1^2+\alpha_1\xi_2^2+\alpha_1\alpha_2+\frac{\mu}{\bar A_{12}}\right).
\]
The bracket is a two-dimensional diagonal symbol with parameters
\[
a=\alpha_2,
\qquad c=\alpha_1,
\qquad d_{12}=\alpha_1\alpha_2+\mu/\bar A_{12}>\alpha_1\alpha_2.
\]
By Theorem \ref{thm:main}, the corresponding two-dimensional face kernel
\[
K^{\mathrm{face}}_{12}(x_1,x_2)=\left(\frac2\pi\right)^2
\int_0^\infty\int_0^\infty
\frac{\cos(x_1\xi_1)\cos(x_2\xi_2)}{W_{\valpha,d}(\xi_1,\xi_2,0,\ldots,0)}\,d\xi_1\,d\xi_2
\]
has a negative value.  Equivalently,
\[
K^{\mathrm{face}}_{12}=\frac1{\bar A_{12}}K^{(2)}_{\alpha_2,\alpha_1,d_{12}}.
\]

Suppose, toward a contradiction, that $K^{(n)}_{\valpha,d}\ge0$ everywhere.  For $\eps>0$ define the Abel average
\[
L_\eps(x_1,x_2)=\int_{\Rp^{n-2}}K^{(n)}_{\valpha,d}(x_1,x_2,z_3,\ldots,z_n)
 e^{-\eps(z_3+\cdots+z_n)}\,dz.
\]
Then $L_\eps\ge0$.  The interchange below is justified as follows: Lemma \ref{lem:nd-reciprocal-integrable} gives $W_{\valpha,d}^{-1}\in L^1(\Rp^n)$, the kernel is bounded by the $L^1$ norm of this reciprocal, and $e^{-\eps(z_3+\cdots+z_n)}$ is integrable in the remaining variables.  Using
\[
\int_0^\infty e^{-\eps z}\cos(z\xi)\,dz=\frac{\eps}{\eps^2+\xi^2}=:P_\eps(\xi),
\qquad \int_0^\infty P_\eps(\xi)\,d\xi=\frac\pi2,
\]
Fubini gives
\[
L_\eps(x_1,x_2)=\left(\frac2\pi\right)^n\int_{\Rp^n}
\frac{\cos(x_1\xi_1)\cos(x_2\xi_2)\prod_{k=3}^nP_\eps(\xi_k)}{W_{\valpha,d}(\boldsymbol{\xi})}\,d\boldsymbol{\xi}.
\]
For fixed $(\xi_1,\xi_2)$, apply Lemma~\ref{lem:half-line-abel} in the variables $(\xi_3,\ldots,\xi_n)$ to the bounded continuous function
\[
\boldsymbol{\zeta}\mapsto \frac1{W_{\valpha,d}(\xi_1,\xi_2,\zeta_3,\ldots,\zeta_n)}.
\]
Since $P_\eps$ has half-line mass $\pi/2$, this gives
\[
\int_{\Rp^{n-2}}\frac{\prod_{k=3}^nP_\eps(\xi_k)}{W_{\valpha,d}(\boldsymbol{\xi})}\,d\xi_3\cdots d\xi_n
\longrightarrow
\left(\frac\pi2\right)^{n-2}
\frac1{W_{\valpha,d}(\xi_1,\xi_2,0,\ldots,0)}.
\]
Moreover
\[
W_{\valpha,d}(\xi_1,\xi_2,\xi_3,\ldots,\xi_n)
\ge W_{\valpha,d}(\xi_1,\xi_2,0,\ldots,0),
\]
so the absolute value of the inner integral is bounded by
\[
\left(\frac\pi2\right)^{n-2}
\frac1{W_{\valpha,d}(\xi_1,\xi_2,0,\ldots,0)}.
\]
The reciprocal of the face symbol is integrable in $(\xi_1,\xi_2)$, so dominated convergence in the face variables gives
\[
\lim_{\eps\downarrow0}L_\eps(x_1,x_2)
=\left(\frac2\pi\right)^n\left(\frac\pi2\right)^{n-2}
\int_0^\infty\int_0^\infty
\frac{\cos(x_1\xi_1)\cos(x_2\xi_2)}{W_{\valpha,d}(\xi_1,\xi_2,0,\ldots,0)}\,d\xi_1\,d\xi_2.
\]
The constant is exactly $(2/\pi)^2$, so the limit is $K^{\mathrm{face}}_{12}(x_1,x_2)$.  Since every $L_\eps$ is nonnegative, the face kernel would be nonnegative everywhere, contradicting the negative face value.  Thus the full kernel changes sign when $d>A$.
\end{proof}

\begin{remark}
The proof shows that, for product-type symbols, the high-dimensional obstruction is already visible on every two-dimensional frequency face.  The supercritical failure is therefore not a new purely high-dimensional phenomenon.
\end{remark}

\subsection{Small non-diagonal perturbations in \texorpdfstring{$n$}{n} dimensions}
Fix finitely many pairs $(S_r,T_r)$ of distinct subsets of $\{1,\ldots,n\}$, $r=1,\ldots,N$.  Define
\[
\begin{aligned}
B_\theta(u,v)=B_0(u,v)&+\sum_{r=1}^N\theta_r\int_{\Rp^n}D_{S_r}uD_{T_r}v\dd\mathbf{x}\\
&+\sum_{r=1}^N\theta_r\int_{\Rp^n}D_{T_r}uD_{S_r}v\dd\mathbf{x},
\end{aligned}
\]
where $B_0$ is the product-type diagonal bilinear form.

\begin{theorem}[Perturbative $n$-dimensional sign stability]\label{thm:nd-stability}
Assume that the product-type diagonal minimizer $u_0$ changes sign. Then there is $\eps_0>0$ such that, whenever $\sum_r|\theta_r|<\eps_0$, the perturbed quadratic form has a unique minimizer under $u(\mathbf{0})=1$. This minimizer changes sign.
\end{theorem}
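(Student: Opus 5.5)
The plan is to copy the two-dimensional argument of Lemmas \ref{lem:b-coercive} and \ref{lem:b-continuity} directly in $E_n$, treating the cross terms as a small bounded perturbation of $B_0$.

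\textbf{Coercivity of $B_0$.} Write $\gamma_0=\min\{1,d,\min_{I}\prod_{j\notin I}\alpha_j\}>0$, so that $B_0(u,u)=J^{(n)}_{\valpha,d}(u)\ge\gamma_0\norm{u}_{E_n}^2$. This holds because every $D_Iu$, $I\subset[n]$, appears in $J^{(n)}_{\valpha,d}$ with a positive coefficient.

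\textbf{The perturbation.} Set
\[
S_\theta(u,v)=\sum_r\theta_r\int_{\Rp^n}\bigl(D_{S_r}uD_{T_r}v+D_{T_r}uD_{S_r}v\bigr)\dd\mathbf{x}.
\]
Cauchy--Schwarz gives $|S_\theta(u,v)|\le 2\bigl(\sum_r|\theta_r|\bigr)\norm{u}_{E_n}\norm{v}_{E_n}$. Hence for $\sum_r|\theta_r|<\gamma_0/4$ the form $B_\theta$ is symmetric, bounded, and satisfies $B_\theta(u,u)\ge(\gamma_0/2)\norm{u}_{E_n}^2$.

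\textbf{Existence and uniqueness.} By Lemma \ref{lem:mixed-embedding}, $\ell(u)=u(\mathbf0)$ is continuous on $E_n$. Lax--Milgram then gives a unique $k_\theta\in E_n$ with $B_\theta(k_\theta,v)=\ell(v)$ for all $v$. Taking $v=k_\theta$ yields $\ell(k_\theta)=B_\theta(k_\theta,k_\theta)>0$, since $k_\theta\ne0$ (because $\ell\ne0$).

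The same Cauchy--Schwarz argument as in Proposition \ref{prop:minimizer-kernel}, now in the Hilbert space $(E_n,B_\theta)$, identifies the minimizer. For $u(\mathbf0)=1$ we have $1=B_\theta(k_\theta,u)^2\le B_\theta(k_\theta,k_\theta)B_\theta(u,u)$, with equality exactly for $u\parallel k_\theta$. So $u_\theta=k_\theta/\ell(k_\theta)$ is the unique constrained minimizer. Equivalently, one can use the first-variation identity \eqref{eq:first-variation-all} together with $J(u_*+h)=J(u_*)+J(h)$ on the tangent space.

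\textbf{Continuity in $\theta$.} Let $T_\theta=T_0+S_\theta$ be the operators $E_n\to E_n^*$. Their inverses are uniformly bounded by $2/\gamma_0$, and the resolvent identity gives $k_\theta-k_0=-T_\theta^{-1}S_\theta k_0$. Therefore
\[
\norm{k_\theta-k_0}_{E_n}\le C\sum_r|\theta_r|\,\norm{k_0}_{E_n}\to0.
\]
By \eqref{eq:point-evaluation-bound}, $k_\theta(\mathbf p)\to k_0(\mathbf p)$ for every $\mathbf p$ and $\ell(k_\theta)\to\ell(k_0)>0$, so $u_\theta(\mathbf p)\to u_0(\mathbf p)$.

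\textbf{Conclusion.} Since $u_0$ changes sign, pick $\mathbf p$ with $u_0(\mathbf p)<0$. The estimate above is uniform, so shrinking the threshold to some $\eps_0$ gives $u_\theta(\mathbf p)<0$ for all $\sum_r|\theta_r|<\eps_0$, while $u_\theta(\mathbf0)=1$. Hence $u_\theta$ changes sign.

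There is no real obstacle. The only point to check is that the diagonal minimizer $u_0$ obtained by Lax--Milgram agrees with the kernel representation $K^{(n)}_{\valpha,d}/K^{(n)}_{\valpha,d}(\mathbf0)$ used in Theorem \ref{thm:nd-product}. Both are the unique constrained minimizer of the same strictly convex problem, so they coincide, and the hypothesis that $u_0$ changes sign (for instance when $d>A_{\valpha}$) refers to this function.
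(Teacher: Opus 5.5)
Your proposal is correct and follows the paper's proof essentially step for step. Both arguments use the same ingredients: coercivity of $B_0$ with constant $\min\{1,d,\prod_{j\notin I}\alpha_j\}$, the crude bound on the symmetric cross terms, the Riesz representative of the corner functional normalized by its corner value, the resolvent identity $k_\theta-k_0=-T_\theta^{-1}S_\theta k_0$, and continuity of point evaluation at a point where $u_0<0$. Your explicit step of shrinking $\eps_0$ below the coercivity threshold, so that $u_\theta(\mathbf p)<0$ holds uniformly, is a welcome tightening of the paper's slightly loose ``for small $\theta$''.
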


\begin{proof}
By Lemma \ref{lem:mixed-embedding}, the corner value and every fixed point value are continuous on $E_n$.

The diagonal form is coercive:
\[
B_0(u,u)\ge \mu_0\norm{u}_{E_n}^2,
\qquad
\mu_0=\min\left\{1,d,\prod_{j\notin I}\alpha_j: \emptyset\ne I\subsetneq[n]\right\}>0.
\]
Each symmetric cross term satisfies
\[
\left|\int D_{S_r}uD_{T_r}u\,d\mathbf{x}+\int D_{T_r}uD_{S_r}u\,d\mathbf{x}\right|
\le 2\norm{u}_{E_n}^2.
\]
Hence $B_\theta(u,u)\ge(\mu_0-2\sum_r|\theta_r|)\norm{u}_{E_n}^2$. We may take, for example, $\eps_0=\mu_0/4$.

Let $g_\theta$ be the Riesz representative of the corner value with respect to $B_\theta$.  The same resolvent argument used in Lemma \ref{lem:b-continuity} gives $g_\theta\to g_0$ in $E_n$.  The minimizer is $u_\theta=g_\theta/g_\theta(0)$.  If $u_0(\mathbf{p})<0$ for some point $\mathbf{p}$, then point evaluation gives $u_\theta(\mathbf{p})<0$ for small $\theta$, while $u_\theta(\mathbf{0})=1$.  Thus $u_\theta$ changes sign.
\end{proof}

\begin{remark}
Combining Theorems \ref{thm:nd-product} and \ref{thm:nd-stability}, every product-type supercritical example $d>\prod_j\alpha_j$ remains sign-changing after sufficiently small finite non-diagonal perturbations.  This is a local stability result; it does not claim a global sharp threshold for arbitrary non-diagonal $n$-dimensional forms.
\end{remark}

\section{Capacity criteria for domains and a decaying evolution analogue}\label{sec:long-answers}

Ekeland's three questions are recorded in Long's ICVAM problem list \cite[Section 3.2.2]{Long2008}.  The sharp positivity theorem above addresses the first one in the diagonal family.  This section records two auxiliary results in precisely stated settings.  The first is a Hilbert-space capacity criterion for a free-boundary domain class.  The second concerns the selected decaying branch of a second-order equation, equivalently a first-order nonlocal semigroup.  Neither result is used in the proof of the sharp positivity threshold.

\subsection{A capacity criterion for domains}
Let $\Omega\subset\R^2$ have the origin on its boundary.  Consider
\[
J_\Omega(u)=\int_\Omega\bigl(u_{xy}^2+a u_x^2+2b u_xu_y+
c u_y^2+d u^2\bigr)\dd x\dd y,
\]
where $a>0$, $c>0$, $d>0$, and $ac-b^2>0$.

We use the following free-boundary energy class. For a function defined on $\Omega$, let
\[
\operatorname{supp}_{\overline\Omega}v
=\overline{\{p\in\Omega:v(p)\ne0\}}^{\,\overline\Omega}
\]
be its support relative to $\overline\Omega$. Let $\mathcal D_{\rm fb}(\Omega)$ be the set of all real functions $v$ such that $v\in C^\infty(\Omega)$; the four quantities $v,v_x,v_y,v_{xy}$ belong to $L^2(\Omega)$; $\operatorname{supp}_{\overline\Omega}v$ is compact; and $v$ admits a continuous representative on $\Omega\cup\{0\}$ in some relative neighborhood of the corner. Boundary contact away from $0$ is allowed and no boundary value is imposed on $\partial\Omega\setminus\{0\}$. Contact with $0$ is allowed only through the prescribed continuous representative, and $v(0,0)$ always refers to that representative.

Let $E(\Omega)$ be the Hilbert completion of $\mathcal D_{\rm fb}(\Omega)$ under the norm induced by $J_\Omega^{1/2}$. Define the relative corner capacity
\[
\CornerCap_\Omega(0)=\inf\{J_\Omega(v):v\in \mathcal D_{\rm fb}(\Omega),\ v(0,0)=1\}.
\]
When $\CornerCap_\Omega(0)>0$, the corner value extends continuously to $E(\Omega)$.  When $\CornerCap_\Omega(0)=0$, the condition $u(0,0)=1$ is understood only at the approximating test-function level and is not closed in the energy topology.

\begin{theorem}[Domain capacity criterion]\label{thm:capacity}
The constrained problem on $\Omega$ has a unique variational minimizer in the energy completion if and only if
\[
\CornerCap_\Omega(0)>0.
\]
If $\CornerCap_\Omega(0)=0$, then
\[
\inf\{J_\Omega(v):v\in \mathcal D_{\rm fb}(\Omega),\ v(0,0)=1\}=0.
\]
Moreover no element of the energy completion represents an attained constrained minimizer, because the corner functional does not extend continuously to $E(\Omega)$.
\end{theorem}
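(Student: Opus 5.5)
The argument is a standard Hilbert-space capacity argument built on the scalar inequality linking the corner value to the energy. First I will check that $J_\Omega^{1/2}$ is a norm on $\mathcal D_{\rm fb}(\Omega)$. Since $ac-b^2>0$, the lower part $a u_x^2+2bu_xu_y+cu_y^2$ is bounded below by $\lambda(u_x^2+u_y^2)$ with $\lambda>0$, so
\[
J_\Omega(v)\asymp \norm{v}_{L^2}^2+\norm{v_x}_{L^2}^2+\norm{v_y}_{L^2}^2+\norm{v_{xy}}_{L^2}^2 .
\]
Hence $E(\Omega)$ is a genuine Hilbert space with inner product $B_\Omega$, the polarization of $J_\Omega$. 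The key reformulation is homogeneity. Writing $\kappa=\CornerCap_\Omega(0)$ and scaling any $v$ with $v(0,0)\neq0$ to corner value $1$ gives
\[
|v(0,0)|^2\,\kappa\le J_\Omega(v)\qquad (v\in\mathcal D_{\rm fb}(\Omega)).
\]
So $\kappa>0$ holds exactly when the linear functional $\ell(v)=v(0,0)$ is bounded on the dense subspace $\mathcal D_{\rm fb}(\Omega)$ with respect to $J_\Omega^{1/2}$, and then $\norm{\ell}^2=1/\kappa$.

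\emph{Case $\kappa>0$.} The functional $\ell$ extends uniquely and continuously to $E(\Omega)$. The constraint set $\{u\in E(\Omega):\ell(u)=1\}$ is then a closed affine hyperplane, nonempty because $\ell\neq0$ (if $\ell$ vanished identically, $\kappa$ would be an infimum over the empty set; I will note that $\mathcal D_{\rm fb}$ contains cut-offs equal to $1$ near the corner, so the set is nonempty). By the Riesz theorem there is $g\in E(\Omega)$ with $B_\Omega(g,v)=\ell(v)$ for all $v$. The argument of Proposition~\ref{prop:minimizer-kernel} and \eqref{eq:first-variation-all} applies: Cauchy--Schwarz gives $1=B_\Omega(g,u)\le J_\Omega(g)^{1/2}J_\Omega(u)^{1/2}$, with equality only when $u$ is a multiple of $g$. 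Equivalently, minimizing the strictly convex norm over a closed affine subspace has a unique solution, namely the orthogonal projection of the origin. So $u_*=g/\ell(g)$ is the unique minimizer, and the minimum value is $1/\ell(g)=\kappa$. Density of $\mathcal D_{\rm fb}$ together with continuity of $\ell$ shows that the infimum over $E(\Omega)$ equals the infimum over test functions.

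\emph{Case $\kappa=0$.} By definition there are $v_k\in\mathcal D_{\rm fb}(\Omega)$ with $v_k(0,0)=1$ and $J_\Omega(v_k)\to0$, so the infimum is $0$. Then $\ell$ is unbounded on the dense subspace: $\ell(v_k)=1$ while $v_k\to0$ in $E(\Omega)$. Consequently $\ell$ admits no continuous extension to $E(\Omega)$, and the constraint is not closed; its closure in the completion contains $0$. A minimizer would have to have energy $0$, which forces it to be the zero element. The zero element cannot carry corner value $1$ in any sense compatible with the limit, since it is the limit of $v_k$ and also of the zero test function, which has corner value $0$. So no element of $E(\Omega)$ represents an attained constrained minimizer. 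This gives the ``only if'' direction.

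The main obstacle is conceptual rather than computational. I must state precisely what ``constrained problem in the energy completion'' means when $\ell$ is unbounded, so that nonexistence becomes a statement about the non-closability of $\ell$ rather than a vacuous one. I will formalize this through the graph closure of $\ell$, observing that $(0,1)$ lies in it. A second, minor point is verifying nonemptiness of the admissible class, which I will handle with a smooth cut-off that is identically $1$ near $0$.
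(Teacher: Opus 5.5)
Your proposal is correct and follows the paper's proof essentially step for step: positivity of the lower-order form from $ac-b^2>0$, the homogeneity inequality $|v(0,0)|^2\CornerCap_\Omega(0)\le J_\Omega(v)$ giving a continuous extension of the corner functional and a unique minimizer on the closed affine hyperplane when the capacity is positive, and a normalized sequence with $J_\Omega(v_k)\to0$ showing that the corner functional is unbounded when the capacity vanishes. Your Riesz-representative formula $u_*=g/\ell(g)$, the nonemptiness check via a cut-off, and the graph-closure formalization of non-closability are useful refinements of points the paper states only briefly.
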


\begin{proof}
The lower-order quadratic form is positive definite, so $J_\Omega$ controls
\[
\norm{u}_{L^2}^2+\norm{u_x}_{L^2}^2+\norm{u_y}_{L^2}^2+\norm{u_{xy}}_{L^2}^2.
\]
If $\CornerCap_\Omega(0)>0$, then by scaling
\[
|v(0,0)|^2\CornerCap_\Omega(0)\le J_\Omega(v)
\]
for all test functions.  Thus the corner value extends continuously to the Hilbert completion of $\mathcal D_{\rm fb}(\Omega)$ under the energy norm.  The constraint is a closed affine hyperplane, and the strictly convex quadratic functional has a unique minimizer on it.

If the capacity is zero, choose $v_k\in \mathcal D_{\rm fb}(\Omega)$ with $v_k(0,0)=1$ and $J_\Omega(v_k)\to0$. This proves the normalized test-class infimum is $0$. In this case the estimate $|v(0,0)|^2\le C J_\Omega(v)$ fails, so the corner value does not define a continuous functional on the energy completion. Consequently there is no closed affine constraint $\{u(0,0)=1\}$ inside $E(\Omega)$ to which a direct-method minimizer could belong; an ``attained'' zero-energy limit would be the zero element in $E(\Omega)$ and cannot carry a nonzero corner value.
\end{proof}

\subsection{Cones inside or containing the quadrant}
Let
\[
C_{\theta_1,\theta_2}=\{r(\cos\theta,\sin\theta):r>0,\ \theta_1<\theta<\theta_2\}.
\]

\begin{theorem}[Cones in the positive quadrant]\label{thm:cones}
Assume $0\le\theta_1<\theta_2\le\pi/2$.  If $C_{\theta_1,\theta_2}$ is a proper subcone of the positive quadrant, then
\[
\CornerCap_{C_{\theta_1,\theta_2}}(0)=0.
\]
Thus no variational minimizer exists there.  The full quadrant is exceptional: its corner capacity is positive, and the Ekeland--Nirenberg minimizer exists and is regular.
\end{theorem}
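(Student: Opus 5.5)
The proof has two parts: the logarithmic cut-off construction in the spirit of the earlier remark on cones, which gives zero capacity for every proper subcone, and the point-evaluation bound of Lemma~\ref{lem:mixed-embedding}, which gives positive capacity for the full quadrant. In both cases Theorem~\ref{thm:capacity} then converts the capacity statement into the statement about minimizers.

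\emph{Proper subcones.} If $C_{\theta_1,\theta_2}$ is a proper subcone, then $\theta_1>0$ or $\theta_2<\pi/2$.

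Suppose first that $\theta_1>0$. The horizontal section of the cone at height $y>0$ is contained in $\{0<x<y\cot\theta_1\}$, so its length is at most $y\cot\theta_1$. I take test functions depending on $y$ alone, $v(x,y)=\varphi(y)$. Then $v_x=0$ and $v_{xy}=0$, and both the cross term $2b\,v_xv_y$ and the $a$-term drop out. Fubini gives
\[
J_{C}(v)\le \cot\theta_1\int_0^\infty\bigl(c\,\varphi'(y)^2+d\,\varphi(y)^2\bigr)\,y\dd y .
\]
This is the two-dimensional radial Dirichlet energy, for which points have zero capacity. Concretely, I take $\varphi_\eps=1$ on $[0,\eps^2]$, $\varphi_\eps(y)=\log(\eps/y)/\log(1/\eps)$ on $[\eps^2,\eps]$, and $\varphi_\eps=0$ for $y\ge\eps$. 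Then
\[
\int\varphi_\eps'^2\,y\dd y=\frac{1}{\log(1/\eps)}\to0,
\qquad
\int\varphi_\eps^2\,y\dd y\le\eps^2 .
\]
The function $\varphi_\eps$ is only Lipschitz, so I mollify it in $y$ while keeping it equal to $1$ near $0$; the energy changes by an arbitrarily small amount.

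The resulting $v$ belongs to $\mathcal D_{\rm fb}(C)$:
\begin{itemize}
\item it is smooth in $C$;
\item its support relative to $\overline C$ lies in $\{y\le\eps\}\cap\overline C$, which is compact because $\theta_1>0$;
\item it equals $1$ near the vertex, so it has the required continuous representative with $v(0,0)=1$.
\end{itemize}
Hence $\CornerCap_C(0)=0$. The case $\theta_2<\pi/2$ is symmetric: use $v=\varphi(x)$ and the vertical sections, whose length is at most $x\tan\theta_2$. Theorem~\ref{thm:capacity} then shows that no variational minimizer exists.

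\emph{Full quadrant.} The hypothesis $ac-b^2>0$ makes the lower-order form positive definite. Hence $J_{\Rp^2}(v)\ge\gamma\norm{v}_{E_2}^2$ for some $\gamma>0$, by the same argument as in Lemma~\ref{lem:b-coercive} but using $ac>b^2$ in place of smallness of $b$.

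Every $v\in\mathcal D_{\rm fb}(\Rp^2)$ lies in $E_2$. Its prescribed continuous representative near the corner coincides with the continuous representative supplied by Lemma~\ref{lem:mixed-embedding}, since two continuous functions that agree almost everywhere agree everywhere. The bound \eqref{eq:point-evaluation-bound} then gives
\[
1=|v(0,0)|^2\le\norm{v}_{E_2}^2\le\gamma^{-1}J(v),
\]
so $\CornerCap_{\Rp^2}(0)\ge\gamma>0$. Theorem~\ref{thm:capacity} then yields a unique minimizer. For $b=0$ it is $K_{a,c,d}/K_{a,c,d}(0,0)$ by Proposition~\ref{prop:minimizer-kernel}. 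Regularity up to the boundary is the Ekeland--Nirenberg result \cite{EkelandNirenberg2005}.

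\emph{Main obstacle.} The delicate point is checking that the cut-off functions genuinely belong to the free-boundary class $\mathcal D_{\rm fb}$, in particular that their relative support is compact. This is exactly where the proper-subcone hypothesis enters: a function of $y$ alone on the full quadrant would have a support that is unbounded in $x$ and would not even lie in $L^2$. The mollification step and the vanishing of the $L^2$ term are routine.
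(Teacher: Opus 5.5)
Your proof is correct and follows essentially the same route as the paper: a one-variable logarithmic cut-off in the direction along which the cone's sections shrink linearly (your radii $\eps^2<\eps$ are just a particular coupling of the paper's two parameters $\eps<R$), and, for the full quadrant, positive capacity from the point-evaluation bound of Lemma~\ref{lem:mixed-embedding} combined with coercivity of $J$. Your explicit checks that the cut-off has compact relative support and that the prescribed corner representative agrees with the embedding's continuous representative fill in details the paper leaves implicit.
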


\begin{proof}
Suppose first $\theta_2<\pi/2$.  Then every vertical section has length at most $Mx$.  Let $0<\eps<R$ and define the logarithmic cut-off
\[
F_{\eps,R}(s)=
\begin{cases}
1,&0\le s\le\eps,\\
\log(R/s)/\log(R/\eps),&\eps<s<R,\\
0,&s\ge R.
\end{cases}
\]
Smooth $F_{\eps,R}$ only in the transition intervals, leaving it identically $1$ near $0$ and identically $0$ for $s\ge R$; the following estimates are unchanged up to an absolute constant. Set $v_{\eps,R}(x,y)=F_{\eps,R}(x)$. This function is admissible in the free-boundary class, has $v_{\eps,R}(0,0)=1$, and may touch the non-corner boundary.  Then $v_y=v_{xy}=0$ and
\[
J_C(v_{\eps,R})\le M\int_0^\infty x\bigl(a |F'_{\eps,R}(x)|^2+
d |F_{\eps,R}(x)|^2\bigr)\dd x.
\]
The two terms are bounded by $C/\log(R/\eps)$ and $CR^2$.  Choose $R\downarrow0$ and then $\log(R/\eps)\to\infty$ to get capacity zero.  If $\theta_1>0$, the same argument uses horizontal sections and a cut-off in $y$.

For the full quadrant, Lemma \ref{lem:mixed-embedding} gives the product trace estimate
\[
|v(0,0)|^2\le C\int_{\Rp^2}(v^2+v_x^2+v_y^2+v_{xy}^2)\dd x\dd y\le C'J_{\Rp^2}(v),
\]
so the capacity is positive.
\end{proof}

\begin{proposition}[Quadrant containment and possible loss of regularity]\label{prop:halfplane}
Let $Q=(\Rpo)^2$. Assume that $\Omega$ contains $Q$ with the same corner and that every $v\in \mathcal D_{\rm fb}(\Omega)$ restricts to an element of $D(Q)$ with the same continuous corner representative. Then $\CornerCap_\Omega(0)>0$, so the variational minimizer exists in the free-boundary energy class. However, regularity is not automatic. In the half-plane $H=\R\times\Rp$, for the critical diagonal parameters $b=0$ and $d=ac$, the minimizer is
\[
u_H(x,y)=e^{-\sqrt c |x|-\sqrt a y},
\]
which is not $C^1$ on the interior line $x=0$, $y>0$.
\end{proposition}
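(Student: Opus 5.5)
The proof splits into two parts: positivity of capacity from restriction to $Q$, and an explicit verification of the half-plane minimizer.

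\textbf{Capacity.} The energy density is pointwise nonnegative, because $ac-b^2>0$ makes the lower-order quadratic form positive definite. Hence for $v\in\mathcal D_{\rm fb}(\Omega)$ we have $J_\Omega(v)\ge J_Q(v|_Q)$. By hypothesis, $v|_Q$ is admissible on $Q$ and has the same corner value. The quadrant trace estimate from the proof of Theorem~\ref{thm:cones}, which rests on Lemma~\ref{lem:mixed-embedding}, gives
\[
|v(0,0)|^2\le C'J_Q(v|_Q)\le C'J_\Omega(v).
\]
Therefore $\CornerCap_\Omega(0)\ge 1/C'>0$. Existence and uniqueness of the minimizer then follow from Theorem~\ref{thm:capacity}.

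\textbf{Half-plane example.} Take $H=\R\times\Rp$ with $b=0$ and $d=ac$. I would mimic the proof of Proposition~\ref{prop:b-critical}: show that $\phi(x,y)=e^{-\sqrt c|x|-\sqrt a y}$ satisfies
\[
B_H(\phi,V)=\kappa V(0,0)
\]
for all test $V$, with $\kappa>0$. Then $\phi/\kappa$ is the Riesz representative of the corner functional, and normalizing gives $u_H=\phi$.

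To get this identity, split $H=H_+\cup H_-$ along $x=0$. On $H_+$ we have $\phi_x=-\sqrt c\,\phi$, $\phi_y=-\sqrt a\,\phi$ and $\phi_{xy}=\sqrt{ac}\,\phi$. The integrand $\phi_{xy}V_{xy}+a\phi_xV_x+c\phi_yV_y+ac\,\phi V$ becomes
\[
\sqrt{ac}\,\bigl(\phi V_{xy}-\sqrt c\,\phi V_y-\sqrt a\,\phi V_x+\sqrt{ac}\,\phi V\bigr)=\sqrt{ac}\,\partial_{xy}(\phi V).
\]
Hence the $H_+$ contribution is $\sqrt{ac}\int_{H_+}\partial_{xy}(\phi V)$. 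By the rectangle integration used in Proposition~\ref{prop:b-critical}, this equals $\sqrt{ac}\,V(0,0)$. On $H_-$ the same computation holds with $\phi_x=+\sqrt c\,\phi$, which gives the density $-\sqrt{ac}\,\partial_{xy}(\phi V)$. Integrating over $x<0$, $y>0$ gives $-\sqrt{ac}\cdot(-V(0,0))$, so $H_-$ also contributes $+\sqrt{ac}\,V(0,0)$.

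The interior line $x=0$ creates no additional term. The energy contains only the first-order $x$-derivative $\phi_x$ and the mixed derivative $\phi_{xy}$, and these are $L^2$ functions with jumps. No second $x$-derivative of $\phi$ ever appears, so $\phi\in E(H)$, and the exact-derivative identity on each side absorbs everything. Altogether $\kappa=2\sqrt{ac}$.

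\textbf{Density and regularity.} The identity must be extended from smooth compactly supported $V$ to all of $E(H)$. Both sides are continuous in the energy norm once capacity is positive; capacity is positive by the first part, since $H$ contains $Q$. This density and continuity step is the main point requiring care.

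Finally, $\partial_x u_H$ jumps from $\sqrt c\,e^{-\sqrt a y}$ to $-\sqrt c\,e^{-\sqrt a y}$ across $x=0$, and this jump is nonzero for every $y>0$. So $u_H$ is not $C^1$ on the line $x=0$, $y>0$.
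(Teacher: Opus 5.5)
Your argument is correct. The capacity part matches the paper's: nonnegativity of the density from $ac-b^2>0$, restriction to $Q$, and the product trace estimate behind Lemma~\ref{lem:mixed-embedding}.

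For the half-plane you take a different route. The paper argues by energy comparison. An admissible $u$ with $u(0,0)=1$ has corner value $1$ on each of the two quadrants. So, implicitly using the reflection $x\mapsto-x$ (harmless because $b=0$), each half carries at least the quadrant minimum, and the separable exponential attains this bound on both halves. You instead verify the weak Euler--Lagrange identity $B_H(\phi,V)=2\sqrt{ac}\,V(0,0)$ directly, using the exact-derivative densities $\pm\sqrt{ac}\,\partial_{xy}(\phi V)$ on $H_\pm$. This identifies $\phi/(2\sqrt{ac})$ as the Riesz representative of the corner functional. The paper's version is shorter and uses the quadrant result as a black box. Yours is self-contained, gives the minimal energy $2\sqrt{ac}$ (twice the quadrant value $\sqrt{ac}$), and shows clearly why the kink line contributes nothing.

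Two points should be tightened.
\begin{itemize}
\item Both routes tacitly need $\phi\in E(H)$ with corner value $1$, which the paper never addresses. Your remark that no second $x$-derivative enters is the right reason, but make it concrete. Mollify in $x$ only, $\phi_\eps(x,y)=(\rho_\eps*e^{-\sqrt c|\cdot|})(x)\,e^{-\sqrt a y}$, then truncate at large radius. This converges in the energy norm because $\phi_x,\phi_{xy}\in L^2$ have no singular part on $x=0$, and $\phi_\eps(0,0)\to1$.
\item For a general $V\in\mathcal D_{\rm fb}(H)$ the rectangle integration is not literally available, since such $V$ need not be smooth up to $y=0$. Instead, apply the $E_2$-level identity from the proof of Proposition~\ref{prop:b-critical} to $V|_{H_\pm}$, whose corner value equals $V(0,0)$ by continuity of the prescribed representative. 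Then extend to $E(H)$ by density and positive capacity.
\end{itemize}
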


\begin{proof}
The pointwise integrand is nonnegative because $ac-b^2>0$: the quadratic form $a p^2+2bpq+cq^2$ is positive definite. Hence, for admissible $v$,
\[
J_Q(v|_Q)\le J_\Omega(v).
\]
The product trace estimate on $Q$ gives
\[
|v(0,0)|^2\le C_Q J_Q(v|_Q)\le C_Q J_\Omega(v),
\]
where the corner value is the same by the restriction hypothesis. Thus $\CornerCap_\Omega(0)>0$.

For the half-plane, split $H$ into the right and left quadrants. Any admissible $u$ has corner value $1$ on both halves, so each half has energy at least the quadrant minimum. The function $e^{-\sqrt c |x|-\sqrt a y}$ attains this lower bound on both halves. It is therefore the minimizer. Its one-sided $x$-derivatives at $x=0$ are $-\sqrt c e^{-\sqrt a y}$ and $+\sqrt c e^{-\sqrt a y}$, so it is not $C^1$ along the interior characteristic line.
\end{proof}

\begin{remark}
The geometric obstruction is characteristic propagation.  Proper cones inside the quadrant have zero corner capacity.  Domains containing the quadrant have a variational solution, but if a coordinate characteristic enters the interior, regularity may fail.
\end{remark}

\subsection{An evolution-equation analogue}
Take $y=t$ as time and consider, for $x\in\R$ and $t>0$,
\[
u_{xxtt}-a u_{xx}-c u_{tt}+d u=0.
\]
After Fourier transform in $x$, the decaying branch satisfies
\[
\partial_t\widehat u(t,\xi)+\lambda(\xi)\widehat u(t,\xi)=0,
\qquad
\lambda(\xi)=\sqrt{\frac{a\xi^2+d}{\xi^2+c}}.
\]
Thus the equation is the first-order nonlocal evolution
\[
\partial_tu+T_{a,c,d}u=0,
\]
where $T_{a,c,d}$ is the Fourier multiplier with symbol $\lambda$.

\begin{theorem}[Decaying branch: well posed but no Sobolev smoothing]\label{thm:evolution}
The evolution above is well posed and exponentially decaying in every Sobolev space $H^s(\R)$.  It has no Sobolev smoothing: for every $t>0$ and $\eps>0$,
\[
u(t)\in H^{s+\eps}(\R)
\quad\Longleftrightarrow\quad
u(0)\in H^{s+\eps}(\R).
\]
In the critical case $d=ac$, one has $\lambda\equiv\sqrt a$.  With initial data $f(x)=e^{-\sqrt c |x|}$,
\[
u(t,x)=e^{-\sqrt c |x|-\sqrt a t},
\]
which remains non-$C^1$ at $x=0$ for every $t>0$.
\end{theorem}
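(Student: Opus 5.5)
The plan is to work entirely on the Fourier side in $x$. The symbol $\lambda(\xi)=\sqrt{(a\xi^2+d)/(\xi^2+c)}$ is real, continuous and even. It satisfies
\[
\min\{\sqrt{d/c},\sqrt a\}\le\lambda(\xi)\le\max\{\sqrt{d/c},\sqrt a\}.
\]
This holds because $\lambda^2$ is a monotone Möbius function of $\xi^2$ running from $d/c$ at $\xi=0$ to $a$ as $\xi\to\infty$.

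With this bound the solution operator is explicit:
\[
\widehat{u}(t,\xi)=e^{-t\lambda(\xi)}\widehat f(\xi).
\]
Write $\lambda_0=\min\{\sqrt{d/c},\sqrt a\}>0$ and $\lambda_1=\max\{\sqrt{d/c},\sqrt a\}$. Since $(1+\xi^2)^{s}$ commutes with the multiplier, Plancherel gives the two-sided estimate
\[
e^{-\lambda_1 t}\norm{f}_{H^s}\le\norm{u(t)}_{H^s}\le e^{-\lambda_0 t}\norm{f}_{H^s}.
\]
The argument then proceeds as follows.
\begin{enumerate}[label=\textup{(\arabic*)}]
\item Existence, uniqueness within the class $C([0,\infty);H^s)$ of solutions of $\partial_t\widehat u=-\lambda\widehat u$, and continuous dependence all follow from the multiplier being bounded. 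Strong continuity in $t$ follows by dominated convergence. This gives well-posedness and exponential decay in every $H^s$.
\item The lower bound holds with the same weight for every $s$. Applying it with $s+\eps$ in place of $s$ gives $\norm{u(t)}_{H^{s+\eps}}<\infty$ exactly when $\norm{f}_{H^{s+\eps}}<\infty$, which is the stated equivalence. The point is that $e^{\pm t\lambda}$ is bounded above and below, so the flow is an isomorphism of each $H^{s}$ and gains no regularity.
\item I will also remark that the decaying branch is exactly this flow. After the Fourier transform the equation becomes $(\xi^2+c)\widehat u_{tt}=(a\xi^2+d)\widehat u$, with characteristic roots $\pm\lambda(\xi)$. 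Selecting the root $-\lambda$ is the definition of the branch, so no further argument is needed there.
\end{enumerate}

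In the critical case $d=ac$ we get $\lambda^2=a(\xi^2+c)/(\xi^2+c)=a$, so $T$ is multiplication by $\sqrt a$ and $u(t)=e^{-\sqrt a t}f$. With $f=e^{-\sqrt c|x|}\in H^1$, this gives $u(t,x)=e^{-\sqrt c|x|-\sqrt a t}$. Its one-sided derivatives at $x=0$ are $\mp\sqrt c\,e^{-\sqrt a t}$, which are distinct, so $u(t,\cdot)$ is not $C^1$ at $x=0$ for any $t>0$.

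I do not expect a serious obstacle. The only point needing care is to state well-posedness in the right class: the semigroup solution of the first-order equation, as opposed to arbitrary Cauchy data of the second-order equation, for which the growing branch $e^{+t\lambda}$ would be present.
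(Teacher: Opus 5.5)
Your proposal is correct and follows essentially the same route as the paper. In both, $\lambda$ is bounded above and below by positive constants, so $e^{-t\lambda}$ and its reciprocal are bounded multipliers; this yields the two-sided $H^s$ estimates, exact preservation of $H^{s+\eps}$, and the critical example from $\lambda\equiv\sqrt a$. Your explicit bounds $\min\{\sqrt{d/c},\sqrt a\}\le\lambda\le\max\{\sqrt{d/c},\sqrt a\}$ just make the paper's unnamed constants $m\le\lambda\le M$ concrete.
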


\begin{proof}
The multiplier $\lambda$ is positive and bounded above and below, say $0<m\le\lambda(\xi)\le M<\infty$. Hence the selected semigroup is exponentially bounded above and invertible on its range; in particular
\[
\norm{u(t)}_{H^s}\le e^{-mt}\norm{f}_{H^s},\qquad
\norm{f}_{H^s}\le e^{Mt}\norm{u(t)}_{H^s}.
\]
Since both $e^{-t\lambda(\xi)}$ and its reciprocal are bounded multipliers, membership in $H^{s+\eps}$ is exactly preserved.  The critical example follows from $\lambda\equiv\sqrt a$.
\end{proof}

\begin{remark}
The theorem concerns the selected decaying branch, or equivalently the first-order semigroup generated by $T_{a,c,d}$.  It is not a well-posedness theorem for arbitrary Cauchy data of the full second-order equation, which also contains a growing branch in Fourier space.
\end{remark}

\begin{remark}
If the mixed term $2b u_xu_t$ is retained, the decaying root becomes
\[
\lambda_b(\xi)=\frac{\sqrt{(\xi^2+c)(a\xi^2+d)-b^2\xi^2}+ib\xi}{\xi^2+c}.
\]
Under $ac-b^2>0$ and $d>0$, its real part remains bounded above and below by positive constants.  The same well-posedness and no-smoothing conclusion holds; the diagonal case only avoids a harmless complex phase.
\end{remark}

\section[Further questions and applications]{Further questions and applications}

The results in this section are separated from the main theorem. They record consequences, diagnostics, and open problems suggested by the proof. The steepest-descent statement below is a standard consequence of a one-dimensional saddle-point theorem; it is not needed for Theorem~\ref{thm:main}.

The fist question is about the geometry of the supercritical negative set under the setting of Theorem~\ref{thm:main}.
For $m>1$, let
\[
 N_m=\{(x,y)\in\Rp^2:K_m(x,y)<0\}.
\]
\begin{question}
How many unbounded negative components does $N_m$ have? Is $N_m$ connected after a compact core is added? How does it move as $m$ increases?
\end{question}

The branch-cut formula gives a precise asymptotic answer near infinity. Put
\[
 A_m=\sqrt{\frac{m-1}{2}},
 \qquad
 H(\alpha)=\int_0^\infty e^{-u}u^{-1/2}\cos\left(\frac\alpha{\sqrt u}\right)\dd u.
\]
Proposition~\ref{prop:boundary} gives the locally uniform limit
\[
 e^x\sqrt x\,K_m\left(x,\frac{\rho}{\sqrt x}\right)
 \longrightarrow \frac2{\pi\sqrt{2(m-1)}}H(A_m\rho).
\]
Therefore compact subintervals of $\{H<0\}$ give negative boundary-layer tongues, while compact subintervals of $\{H>0\}$ give positive corridors.

The following standard saddle-point expansion, used here only as a diagnostic for open problems, shows that $H$ has infinitely many sign intervals. Writing
\[
 I(\alpha)=\int_0^\infty e^{-u}u^{-1/2}e^{i\alpha/\sqrt u}\dd u,
 \qquad H(\alpha)=\operatorname{Re}I(\alpha),
\]
and scaling $u=\alpha^{2/3}v$, the phase has the saddle $v_0=2^{-2/3}e^{-i\pi/3}$ connected with the positive real axis. The usual steepest-descent theorem \cite[Ch.~7]{Olver1974} gives
\[
 H(\alpha)=\frac{2\sqrt\pi}{\sqrt3}e^{-\sigma\alpha^{2/3}}
 \left(\cos(\tau\alpha^{2/3})+O(\alpha^{-2/3})\right),
 \qquad \alpha\to\infty,
\]
with
\[
 \sigma=\frac3{2^{5/3}},
 \qquad
 \tau=\frac{3\sqrt3}{2^{5/3}}.
\]
The verification required by the theorem is the standard one: the positive real ray can be deformed to the steepest path through $v_0$, the endpoint contribution at $0$ is exponentially small on the deformed contour, and the tail is dominated by the negative real part of the phase. This asymptotic explains the infinitely many boundary-layer tongues but leaves their global topology open.
It is also interesting to study the cost of forbidding signed localizers.
Let
\[
 \mathcal A=\{u\in E_2:u(0,0)=1\},
 \qquad
 \mathcal A_+=\{u\in\mathcal A:u\ge0\hbox{ a.e.}\},
\]
and define
\[
 E_*:=\min_{u\in\mathcal A}J_{a,c,d}(u),
 \qquad
 E_+:=\inf_{u\in\mathcal A_+}J_{a,c,d}(u).
\]
If $d\le ac$, then $u_*>0$ and $E_+=E_*$. If $d>ac$, then $u_*$ changes sign and, for every $v\in\mathcal A_+$,
\begin{equation}\label{eq:variance-gap}
 J_{a,c,d}(v)-J_{a,c,d}(u_*)=J_{a,c,d}(v-u_*)
 \ge d\norm{u_*^-}_{L^2(\Rp^2)}^2>0.
\end{equation}
Taking the infimum over $v\in\mathcal A_+$ gives $E_+-E_*\ge d\norm{u_*^-}_{L^2(\Rp^2)}^2$. The equality in \eqref{eq:variance-gap} follows from the Euler orthogonality $B_{a,c,d}(u_*,h)=0$ for all $h(0,0)=0$. In finite-dimensional Monte Carlo discretizations, the same identity says that negative weights are control weights, not probabilities.

The diagonal theory suggests three separate issues for a full non-diagonal theorem: coercivity of the quadratic form, positivity of a transform representation, and a sign-forcing mechanism when transform positivity fails. Corollary \ref{cor:intro-b} and Theorem~\ref{thm:nd-stability} give only local stability near sign-changing diagonal examples. A structural criterion for arbitrary non-diagonal forms remains open.

\begin{question}
What is the sharp criterion beyond the diagonal cases?
\end{question}

A plausible target is a positive transform representation for the reciprocal symbol, or, in genuinely coupled sine--cosine coordinates, a matrix-valued complete monotonicity condition for the inverse symbol. If such positivity fails on a lower-dimensional face, the branch-cut method of Proposition~\ref{prop:branch} suggests a route to sign change. Turning this scheme into an explicit theorem is a separate problem.

\small {\bf \small  Declarations of interest:}
 none.

{\bf \small Data availability statement:} There are no new data associated with this article.

{\bf \small AI assistance statement:}
The authors used OpenAI models to assist with proof exploration, symbolic checking, numerical experimentation, and manuscript editing. The mathematical validation, the final proof choices, and the final text are the responsibility of the human authors.

 \newpage
\noindent {Qi Guo\\
School of Mathematics,\\
Renmin University of China, Beijing, 100872, P.R. China\\
e-mail: qguo@ruc.edu.cn}
\medskip
\\
\noindent {Xueping Huang\\
school of Mathematics and Statistics,\\
 Nanjing University of Information Science and
Technology, Nanjing 210044, P. R. China\\
email: hxp@nuist.edu.cn}
\medskip
\\
\noindent {Yi C. Huang\\
  School of Mathematical Sciences,\\
Nanjing Normal University, Nanjing 210023, P.R. China \\ 
e-mail: Yi.Huang.Analysis@gmail.com}

\end{document}